\def\be{\begin{equation}}
\def\ee{\end{equation}}
\def\beq{\begin{eqnarray}}
\def\eeq{\end{eqnarray}}
\def\beqs{\begin{eqnarray*}}
\def\eeqs{\end{eqnarray*}}
\def\ea{\end{array}}
\def\ea{\end{array}}
\def\ds{\displaystyle}
\def\caR{{\cal R}}
\def\caV{{\cal V}}
\newcommand{\rline}  {{\RR}}
\newcommand{\half}   {{\frac{1}{2}}}
\newcommand{\nline}  {{\NN}}
\def\RR{{\rm I~\hspace{-1.15ex}R} }
\def\11{{\rm 1~\hspace{-1.5ex}1} }
\def\NN{{\rm I~\hspace{-1.15ex}N} }
\def\CC{\rm \hbox{C\kern-.56em\raise.4ex
         \hbox{$\scriptscriptstyle |$}\kern+0.5 em }}
\newcommand{\rfb}[1]{\mbox{\rm
   (\ref{#1})}\ifx\undefined\stillediting\else:\fbox{$#1$}\fi}
\def\section{\@startsection {section}{1}{\z@}{-3.5ex plus -1ex minus
    -.2ex}{2.3ex plus .2ex}{\large\bf}}
\font\eufm=eufm10\font\eufms=eufm10\font\eufmss=eufm10\newfam\eufam
\newcommand{\eps}{\varepsilon}
\def\cO{\chi_{_{_{\lambda_0}}}}
\def\cOL{\chi_{_{_{\lambda_{0},L}}}}
\newtheorem{theorem}{Theorem}[section]
\newtheorem{lemma}[theorem]{Lemma}
\newtheorem{corollary}[theorem]{Corollary}
\newtheorem{remark}[theorem]{Remark}
\newtheorem{proposition}[theorem]{Proposition}
\newtheorem{Definition}[theorem]{Definition}
\begin{document}
\thispagestyle{empty}
\title{\bf Dispersive effects and high frequency behaviour for the Schr\"{o}dinger equation in
star-shaped networks}
\author{
Felix Ali Mehmeti $^\dag$, \, Ka\"{\i}s Ammari
\thanks{UR Analyse et Contr\^ole des Edp, UR13E564, D\'epartement de Math\'ematiques, Facult\'e des Sciences de
Monastir, Universit\'e de Monastir, 5019 Monastir, Tunisie, \ email: kais.ammari@fsm.rnu.tn}
, \, Serge Nicaise
\thanks{Universit\'e de Valenciennes et du Hainaut Cambr\'esis,
LAMAV, FR CNRS 2956, Le Mont Houy, 59313 Valenciennes Cedex 9,
France, \, email: felix.ali-mehmeti@univ-valenciennes.fr (F. Ali
Mehmeti), \, snicaise@univ-valenciennes.fr (S. Nicaise)}}
\date{}
\maketitle
%
%
%
{\bf Abstract.~} {\small We prove the time decay estimates $L^1
({\cal R}) \rightarrow L^\infty ({\cal R}),$ where ${\cal R}$ is an
infinite star-shaped network,
 for the Schr\"odinger group $e^{it(- \frac{d^2}{dx^2} + V)}$
for real-valued potentials $V$ satisfying some regularity
and decay assumptions. Further we show that the solution for initial conditions
with a lower cutoff frequency tends to the free solution,
if the cutoff frequency tends to infinity. \\

\noindent
{\bf Mathematics Subject Classification (2010).} 34B45, 47A60, 34L25, 35B20, 35B40.\\
{\bf Keywords.} Dispersive estimate, Schr\"odinger operator,
nonlinear Schr\"odinger equation, Star-shaped network.

\section{Introduction} \label{formulare}
A characteristic feature of the Schr\"odinger equation is the loss
of the localization of wave packets during evolution, the
dispersion. This effect can be measured by $L^{\infty}$-time decay,
which implies a spreading out of the solutions, due to the time
invariance of the $L^{2}$-norm. The well known fact that the free
Schr\"odinger group in $\RR^n$ considered as an operator family from
$L^{1}$ to $L^{\infty}$ decays exactly as $c \cdot t^{-n/2}$ follows
easily from the explicit knowledge of the kernel of this group
\cite[p. 60]{ReedSimonII}.
For Schr\"odinger operators in one and three space dimensions with
potentials decaying sufficiently rapidly at infinity, similar
estimates have been proved in \cite{GS} for the projection of the
group on the subspace corresponding to the absolutely continuous
spectrum (without optimality). This approach uses an expansion in
generalized eigenfunctions together with estimates developed in
inverse scattering theory \cite{deift-trub}. We also refer to
\cite{wederb} for the Schr\"odinger equation on the half-line with
Dirichlet boundary conditions at 0.

In this paper we derive analogous $L^{\infty}$-time decay estimates
for Schr\"odinger equations with decaying potentials on a one
dimensional star shaped network. Further we state a perturbation
result showing that high energy solutions behave almost as free
solutions. For this purpose we furnish an explicit estimate of the
difference in terms of the lower cutoff frequency, the potential and
time. This result seems to be new even on the line.

Before a precise statement of our main results,
%
%
let us introduce some notation which will be used throughout the
rest of the paper.

Let $R_i,i=1,...,N,$ be $N (N \in \nline, N \geq 2)$ disjoint sets
identified with $(0,+\infty)$ and put ${\cal R} := \ds \cup_{k=1}^N
\overline{R}_k$. We denote by $f = (f_k)_{k=1,...,N} =
(f_1,...,f_N)$ the functions on ${\cal R}$ taking their values in
$\CC$ and let $f_k$ be the restriction of $f$ to $R_k$.

Define the Hilbert space ${\cal H} = \ds \prod_{k=1}^N L^2(R_k)$
with inner product $((u_k),(v_k))_{\cal H} = \ds \sum_{k=1}^N
(u_k,v_k)_{L^2(R_k)}$ and introduce the following transmission
conditions: \be \label{t0} (u_k)_{k=1,...,N} \in \ds \prod_{k=1}^N
C(\overline{R_k}) \; \hbox{satisfies} \; u_i(0) = u_k(0) \, \forall
\, i,k= 1,...,N, \ee

\be \label{t1}
(u_k)_{k=1,...,N} \in \ds \prod_{k=1}^N
C^1(\overline{R_k}) \; \hbox{satisfies} \; \ds \sum_{k=1}^N \frac{d
u_k}{dx}(0^+) = 0. \ee

Let $H_0 : {\cal D}(H_0) \rightarrow {\cal H}$ be the linear operator on
${\cal H}$ defined by :
$$
{\cal D}(H_0) = \left\{(u_k) \in \prod_{k=1}^N H^2(R_k); \, (u_k) \;
\hbox{satisfies} \; \rfb{t0},\rfb{t1} \right\},
$$
$$
H_0 (u_k) = (H_{0,k} u_k)_{k=1,...,N} = (- \frac{d^2 u_k}{dx^2})_{k=1,...,N} = - \Delta_{{\cal R}}(u_k).
$$
This operator $H_0$   is self-adjoint and
its spectrum $\sigma(H_0)$ is equal
 to $[0,+\infty)$ (see \cite{felix} for more details).

For any $s\in \RR$, let us denote by  $L^1_s ({\cal R})$ the space
of all complex-valued measurable functions
$\phi=(\phi_1,\ldots,\phi_N)$ defined on ${\cal R}$ such that
$$
\|\phi\|_{L^1_s ({\cal R})}:=\int_{\cal R} \left|\phi (x) \right|
\left\langle x \right\rangle^s \, dx =
\sum_{k=1}^N \int_{R_k} |\phi_k(x)|\left\langle x \right\rangle^s \,dx
< \infty,
$$
where $\langle x\rangle = (1+\left|x\right|^2)^{1/2}$.
This space is a Banach space with the norm $\|\cdot\|_{L^1_s ({\cal R})}$.

Let $V \in L^1_1 ({\cal R})$. Denote by $H$ the self-adjoint
realization of the operator $- \ds \frac{d^2}{dx^2} + V$  together
with the transmission conditions (\ref{t0}) and (\ref{t1}) on
$L^2({\cal R})$. From chapter 2 of \cite{berzin}, we deduce that its
spectrum satisfies
$$\sigma (H) = [0,+ \infty) \cup \left\{\hbox{a
finite number of negative eigenvalues} \right\}.$$
We first verify that the free Schr\"odinger group on the star-shaped network ${\cal R}$ satisfies the following dispersive
estimate (see Section \ref{free})
$$
\left\|e^{it H_0}\right\|_{L^1({\cal R}) \rightarrow L^\infty({\cal R})} \leq  C|t|^{- 1/2}, \, t \neq  0.
$$
Our goal is then to assume non restrictive  assumptions on the
potential $V$ in terms of decay or regularity in order to get a
similar decay for the Schr\"odinger equation with potential $V$.
More precisely, we will prove the following theorem.
\begin{theorem} \label{mainresult}
Let $V \in L^1_\gamma({\cal R})$ be real valued, with $\gamma>5/2$
and assume that (\ref{assumptionserge}) below holds. Then for all $t
\neq 0$, \be \label{dispest} \left\|e^{itH} P_{ac}(H)
\right\|_{L^1({\cal R}) \rightarrow L^\infty({\cal R})} \leq C \,
\left|t\right|^{- 1/2} \ee where $C$ is a positive constant and
$P_{ac}(H)$ is the projection onto the absolutely continuous
spectral subspace.
\end{theorem}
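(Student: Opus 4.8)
The plan is to follow the stationary-phase strategy for one-dimensional dispersive estimates developed in \cite{GS,wederb}, adapted to the branching geometry of ${\cal R}$. Writing $\lambda = k^2$ and using the spectral theorem, one represents the absolutely continuous part of the propagator through a generalized Fourier expansion in the scattering (Jost-type) eigenfunctions of $H$, so that the kernel of $e^{itH}P_{ac}(H)$ takes the form
\be
K(t,x,y) = \int_0^{\infty} e^{-itk^2}\, m(x,y,k)\, dk,
\ee
where $m(x,y,k)$ collects the products of generalized eigenfunctions on the various branches together with the spectral density. The dispersive bound (\ref{dispest}) is then equivalent to $\sup_{x,y}|K(t,x,y)| \leq C|t|^{-1/2}$, so the task reduces to proving amplitude estimates on $m$ that are uniform in $(x,y) \in {\cal R}\times{\cal R}$, followed by a van der Corput argument for the oscillatory integral.

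First I would construct, for each positive energy $k^2$, the generalized eigenfunctions of $H$ on the network. On each branch $R_\ell$ one builds the Jost solution $f_\ell(\cdot,k)$ of $-f'' + V_\ell f = k^2 f$ with the asymptotics $f_\ell(x,k) \sim e^{ikx}$ as $x\to+\infty$ via the standard Volterra integral equation; the decay hypothesis $V\in L^1_\gamma({\cal R})$ with $\gamma>5/2$ guarantees that these solutions, and several of their $k$-derivatives, are bounded uniformly in $x$ and continuous up to $k=0$. The transmission conditions (\ref{t0}), (\ref{t1}) at the vertex then furnish a linear system whose solution yields the scattering (connection) coefficients and, through Wronskian identities, the spectral density. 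The assumption (\ref{assumptionserge}) is used precisely to ensure that the relevant determinant does not degenerate at the threshold $k=0$, i.e.\ to exclude a zero-energy resonance, so that $m(x,y,\cdot)$ stays bounded near $k=0$.

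With these building blocks I would establish that, uniformly in $(x,y)$, the amplitude $m(x,y,\cdot)$ is bounded and of bounded variation on $(0,\infty)$, with $\partial_k m(x,y,\cdot) \in L^1(0,\infty)$. The low-energy regime is controlled by (\ref{assumptionserge}) and the continuity of the Jost solutions at $k=0$; the high-energy regime is controlled by a Born-series expansion of the Jost solutions, whose remainder is summable thanks to the weighted integrability of $V$. Inserting these bounds into the van der Corput lemma for the phase $\phi(k)=-k^2$, whose second derivative is bounded below (with a single stationary point at the endpoint $k=0$), yields
\be
\sup_{x,y}|K(t,x,y)| \leq C|t|^{-1/2}\Big(\sup_{x,y}\|m(x,y,\cdot)\|_{L^\infty} + \sup_{x,y}\|\partial_k m(x,y,\cdot)\|_{L^1}\Big),
\ee
which is the desired estimate. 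A smooth dyadic cutoff separating low and high frequencies makes the application of van der Corput rigorous, and the free estimate from Section \ref{free} serves as a model and a consistency check for the case $V\equiv 0$.

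The main obstacle will be the uniform-in-$(x,y)$ control of the generalized eigenfunctions and of the spectral density across the full energy range, and in particular the matching at the vertex: unlike the line or the half-line, the scattering coefficients here solve an $N\times N$ linear system built from the Jost data of all branches, and one must show that the resulting amplitude $m$ inherits the boundedness, decay and $k$-regularity of the individual Jost solutions without picking up singularities from the vertex determinant. Handling the threshold behaviour at $k=0$ under the sole hypothesis (\ref{assumptionserge}), while keeping all bounds uniform in the spatial variables on the unbounded network, is the technical heart of the argument.
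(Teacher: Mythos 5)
Your overall strategy---branch-wise Jost solutions matched at the vertex into generalized eigenfunctions of $H$, a low/high energy splitting, and reduction to one-dimensional oscillatory integrals---is the same as the paper's, and you place assumption (\ref{assumptionserge}) exactly where the paper uses it (non-degeneracy of the vertex data at zero energy). But the central analytic step, as you formulate it, fails. You put the entire product of generalized eigenfunctions and spectral density into the amplitude $m(x,y,k)$ and apply van der Corput with phase $\phi(k)=-k^2$ alone, claiming $\sup_{x,y}\|\partial_k m(x,y,\cdot)\|_{L^1}<\infty$. This is false: the generalized eigenfunctions carry oscillatory factors $e^{\pm ikx}$ (already in the free case the kernel (\ref{free kernel}) contains $e^{\pm i(x+x')\sqrt{\lambda}}$), so $\partial_k m(x,y,k)$ contains terms of size $|x|+|y|$, which is unbounded on the network; the constant your van der Corput estimate produces blows up with $x+y$. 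The same error appears earlier when you assert that the Jost solutions $f_{j,\pm}(k,\cdot)$ ``and several of their $k$-derivatives are bounded uniformly in $x$'': this holds only for the reduced functions $m_{j,\pm}(k,x)=e^{\mp ikx}f_{j,\pm}(k,x)$ (estimates (\ref{estm+}), (\ref{estmdot})), not for the Jost solutions themselves, whose $k$-derivatives grow linearly in $x$.

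The missing idea is to move the oscillation out of the amplitude and into the phase: factor $f_{j,\pm}(k,x)=e^{\pm ikx}m_{j,\pm}(k,x)$, so each kernel term becomes $e^{i(-tk^2+k(x+x'))}$ (or with $|x-x'|$) times a reduced amplitude built from the $m_{j,\pm}$ and the scattering coefficients; the linear term does not affect $\phi''$, so a uniform $|t|^{-1/2}$ bound follows either from van der Corput or, as the paper does, from the Plancherel/translation identity $T_2(t,x,x')=t^{-1/2}\int_{-\infty}^{+\infty}{\mathcal F}^{-1}(pq)(\xi+x+x')\,e^{-i\xi^2/t}\,d\xi$, giving a bound by $C|t|^{-1/2}\|pq\|_{H^1(\RR)}$ independent of $(x,x')$. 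Note also that even for the reduced amplitude, the available $k$-derivative bounds (the paper's Corollaries \ref{corosdansH1} and \ref{corouncoefdansH1}, resting on \cite{deift-trub,weder}) are only $H^1$ on compact intervals; this is precisely why the paper restricts the eigenfunction expansion to a compactly supported cutoff $\chi$ and treats all high energies by a different mechanism, namely the Born series $R_V=\sum_{k\geq 0}R_0(-VR_0)^k$ of Proposition \ref{Born series}, each term being reduced to the free group on the line and the series summed via the Fourier estimates of Theorem \ref{p1}. Your plan of ``Born series for the Jost solutions plus van der Corput over all of $(0,\infty)$'' would require an $L^1(0,\infty)$ bound on $\partial_k$ of the reduced amplitude uniformly in $(x,y)$ over the whole half-line, which the cited machinery does not provide.
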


The assumption (\ref{assumptionserge})  is satisfied by a large
choice of potentials (see Lemma \ref{lassumptionserge} below). It
allows to built the kernel of the resolvent (see Definition
\ref{def.K} and Theorem \ref{theo1}) and takes into account the
ramification character of the problem.

At a first attempt, we have   assumed that $V \in L^1_\gamma({\cal
R})$, with $\gamma>5/2$ (in order to be able to apply some
appropriate estimates on the derivatives of the Jost functions, see
for instance Corollary \ref{corosdansH1}), while a similar result
probably holds under the assumption that  $V \in L^1_2({\cal R})$
(see \cite{GS} in the case $N=2$). This decay of the potential
implies that we do not need the so-called non resonance at zero
energy assumption (see \cite[pp. 163-164]{GS}). For potentials $V
\in L^1_1({\cal R})$ such an assumption would appear but it is a
difficult and delicate question. Furthermore, up to our knowledge,
if $V \in L^1_1({\cal R})$, it is unknown how to built  the kernel
of the resolvent.

As a consequence, we have the following $L^p - L^{p^\prime}$ estimate.
\begin{corollary} ($L^p - L^{p^\prime}$ estimate) \\
Under the assumptions of Theorem \ref{mainresult},   for $1 \leq p \leq 2$ and $\frac{1}{p} + \frac{1}{p^\prime} = 1$ we have for all $t \neq 0$,
\be
\label{strichartzbb}
\left\|e^{itH} P_{ac}(H)\right\|_{L^p({\cal R}) \rightarrow L^{p^\prime}({\cal R})} \leq C \, \left|t\right|^{-\frac{1}{p} + \half},
\ee
where $C > 0$ is a constant.
\end{corollary}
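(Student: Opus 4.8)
The plan is to deduce the $L^p - L^{p^\prime}$ bound (\ref{strichartzbb}) from the $L^1 - L^\infty$ dispersive estimate (\ref{dispest}) of Theorem \ref{mainresult} by means of the Riesz--Thorin interpolation theorem. The two endpoint estimates I would start from are, on the one hand, the conclusion of Theorem \ref{mainresult},
\[
\left\|e^{itH} P_{ac}(H) \right\|_{L^1({\cal R}) \rightarrow L^\infty({\cal R})} \leq C \left|t\right|^{-1/2}, \qquad t \neq 0,
\]
and, on the other hand, the $L^2$ contraction bound
\[
\left\|e^{itH} P_{ac}(H) \right\|_{L^2({\cal R}) \rightarrow L^2({\cal R})} \leq 1 .
\]
The latter is immediate: since $H$ is self-adjoint, $e^{itH}$ is unitary on ${\cal H} = L^2({\cal R})$ by the spectral theorem, and $P_{ac}(H)$ is an orthogonal projection, hence a contraction, so the product $e^{itH}P_{ac}(H)$ is a contraction on $L^2({\cal R})$.

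Next I would observe that ${\cal R}$, equipped with the sum of the one-dimensional Lebesgue measures on the edges $R_k$, is a $\sigma$-finite measure space, so Riesz--Thorin applies to the linear operator $T := e^{itH} P_{ac}(H)$ on the scale of spaces $L^p({\cal R})$. Taking the endpoints $(p_0,q_0) = (1,\infty)$ with bound $M_0 = C|t|^{-1/2}$ and $(p_1,q_1) = (2,2)$ with bound $M_1 = 1$, and interpolating with parameter $\theta \in [0,1]$, the interpolation identities (using $1/\infty = 0$)
\[
\frac{1}{p_\theta} = \frac{1-\theta}{1} + \frac{\theta}{2} = 1 - \frac{\theta}{2}, \qquad \frac{1}{q_\theta} = \frac{1-\theta}{\infty} + \frac{\theta}{2} = \frac{\theta}{2}
\]
give $\frac{1}{p_\theta} + \frac{1}{q_\theta} = 1$, so $q_\theta = p_\theta^\prime$ along the whole interpolation line. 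Solving $1/p_\theta = 1/p$ yields $\theta = 2 - 2/p$, which runs over $[0,1]$ exactly as $p$ runs over $[1,2]$.

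Finally, the Riesz--Thorin bound reads $\|T\|_{L^{p_\theta} \rightarrow L^{q_\theta}} \leq M_0^{1-\theta} M_1^{\theta} = C^{1-\theta} |t|^{-(1-\theta)/2}$. Substituting $1-\theta = 2/p - 1$ gives $(1-\theta)/2 = 1/p - 1/2$, so the exponent of $|t|$ is $-1/p + \half$, which is precisely (\ref{strichartzbb}), after absorbing the factor $C^{1-\theta} \leq \max(1,C)$ into a new constant. I expect no genuine obstacle here: the argument is a routine application of interpolation, and the only point requiring a word of justification is the $L^2$ contractivity of $e^{itH}P_{ac}(H)$, which follows from self-adjointness of $H$. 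All the analytic difficulty has already been absorbed into the dispersive estimate (\ref{dispest}).
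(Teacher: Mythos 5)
Your argument is correct and is essentially the paper's own proof: the paper establishes this corollary (explicitly in the free case, with the remark that the potential case is handled identically) by interpolating the $L^1({\cal R})\rightarrow L^\infty({\cal R})$ dispersive bound of Theorem \ref{mainresult} against the $L^2$ bound coming from unitarity of $e^{itH}$, exactly as you do via Riesz--Thorin. Your additional remarks on $P_{ac}(H)$ being an orthogonal projection and on $\sigma$-finiteness of ${\cal R}$ are correct and merely make explicit what the paper leaves implicit.
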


Moreover we have the following Strichartz estimates which have been used in the context of the nonlinear Schr\"odinger equation to obtain well-posedness results.

\begin{corollary} \label{stric} (Strichartz estimates)
Let  the assumptions of Theorem \ref{mainresult} be satisfied. Then for $2 \leq p,q \leq + \infty$ and $\frac{1}{p} + \frac{2}{q} = \frac{1}{2}$ we have for all $t$,
\be
\label{strichartzb1}
\left\|e^{itH} P_{ac}(H)f\right\|_{L^q(\rline,L^p({\cal R}))}  \leq C \, \left\|f\right\|_2, \, \forall \, f \in L^p({\cal R}) \cap L^2({\cal R}),
\ee
where $C > 0$ is a constant.
\end{corollary}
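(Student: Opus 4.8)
The plan is to deduce these space--time bounds from the pointwise-in-time dispersive estimate (\ref{dispest}) of Theorem \ref{mainresult} together with the $L^2$-conservation of the flow, via the standard $TT^*$ (Keel--Tao) machinery; the genuine analytic work is entirely contained in Theorem \ref{mainresult}, so what remains is a routine abstract deduction. I write $U(t):=e^{itH}P_{ac}(H)$ acting on $L^2({\cal R})$ and regard $\{U(t)\}_{t\in\rline}$ as a one-parameter family of operators from the Hilbert space $L^2({\cal R})$ into $L^2({\cal R})$.

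First I would record the two inputs. Since $H$ is self-adjoint, $e^{itH}$ is unitary on $L^2({\cal R})$ and commutes with the orthogonal spectral projection $P_{ac}(H)$, whence the energy bound
\[
\|U(t)f\|_{L^2({\cal R})} \le \|P_{ac}(H)f\|_{L^2({\cal R})} \le \|f\|_{L^2({\cal R})}, \qquad t\in\rline .
\]
Using $P_{ac}(H)^*=P_{ac}(H)=P_{ac}(H)^2$ together with the group law one obtains $U(t)U(s)^* = e^{i(t-s)H}P_{ac}(H)$, so Theorem \ref{mainresult} yields exactly the untruncated decay estimate
\[
\|U(t)U(s)^* g\|_{L^\infty({\cal R})} \le C\,|t-s|^{-1/2}\,\|g\|_{L^1({\cal R})}, \qquad t\ne s .
\]
These are precisely the two hypotheses required for the abstract Strichartz theorem with decay exponent $\sigma=\half$.

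To make the deduction concrete I would interpolate (Riesz--Thorin) between the $L^2\to L^2$ bound and the $L^1\to L^\infty$ bound to get, for $2\le p\le\infty$,
\[
\|e^{i\tau H}P_{ac}(H)\|_{L^{p'}({\cal R})\to L^p({\cal R})} \le C\,|\tau|^{-\left(\frac12-\frac1p\right)}, \qquad \tau\ne 0 .
\]
By the $TT^*$ argument, the homogeneous estimate (\ref{strichartzb1}) is equivalent to the boundedness of the map $F\mapsto \int_{\rline} U(t)U(s)^*F(s)\,ds$ from $L^{q'}(\rline,L^{p'}({\cal R}))$ into $L^q(\rline,L^p({\cal R}))$; inserting the interpolated kernel bound reduces this to the one-dimensional Hardy--Littlewood--Sobolev inequality for convolution with $|t|^{-\left(\frac12-\frac1p\right)}$. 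The exponent relation demanded by Hardy--Littlewood--Sobolev, namely $\frac1q=\left(\frac12-\frac1p\right)-\frac1q$, is \emph{exactly} the admissibility condition $\frac2q+\frac1p=\half$, and one checks it forces $4\le q\le\infty$: the degenerate pair $p=2$, $q=\infty$ is just the energy estimate, while for the remaining pairs $p\in(2,\infty]$, $q\in[4,\infty)$ one has $\frac12-\frac1p\in(0,\half]$, so the convolution kernel stays strictly inside the strong-type range of Hardy--Littlewood--Sobolev and the delicate endpoint of the Keel--Tao theorem never arises. The resulting bound holds for all $f\in L^2({\cal R})$, in particular for $f\in L^p({\cal R})\cap L^2({\cal R})$.

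The only point to watch—rather than a genuine obstacle—is the verification that the flow really fits the abstract framework: that $P_{ac}(H)$ is an orthogonal projection commuting with $e^{itH}$, so that $U(t)U(s)^*$ again reduces to the absolutely continuous part to which (\ref{dispest}) applies, and that the Hardy--Littlewood--Sobolev exponents land away from the forbidden endpoint. Once these are in place the corollary is immediate, the substance of the result being already secured by the dispersive estimate of Theorem \ref{mainresult}.
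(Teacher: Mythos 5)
Your proposal is correct and follows essentially the same route as the paper: the paper proves the free case by interpolating the dispersive bound with $L^2$-conservation and then running the $T^*T$/Hardy--Littlewood--Sobolev argument, and states that Corollary \ref{stric} ``can be proved in the same way,'' which is exactly your deduction with $U(t)=e^{itH}P_{ac}(H)$. If anything, your treatment is slightly more careful than the paper's on two minor points: you verify explicitly that $U(t)U(s)^*=e^{i(t-s)H}P_{ac}(H)$ so that the projection causes no trouble, and you correctly observe that with decay exponent $\sigma=\half$ the admissibility relation forces $q\geq 4$, so the Keel--Tao endpoint the paper invokes for ``$q=2$'' never actually arises here.
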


As a direct consequence, see \cite{casenave}, we have the following well-posedness result for a nonlinear Schr\"odinger equation with potential.
Let $p \in (0,4)$ and suppose that $V$ satisfies the assumptions of Theorem \ref{mainresult}. Then, for any $u_0 \in L^2({\cal R})$, there exists a unique solution
$$u \in C(\rline;L^2({\cal R})) \cap \; \ds \bigcap_{(q,r) \; \hbox{admissible}} L^q_{loc}(\rline;L^r({\cal R}))$$ of the equation
\be
\label{schb}
\left\{
\begin{array}{ll}
i u_t - \Delta_{{\cal R}} u  + V \, u \pm \left|u\right|^p u = 0, \, t \neq 0, \\
u(0) = u^0.
\end{array}
\right. \ee Recall that a pair $(q,r)$ is called  admissible  if
$(q,r)$ satisfies that $2 \leq r,q \leq +\infty$ and $\frac{2}{q} +
\frac{1}{r} = \frac{1}{2}$.

\begin{remark}
Another direct consequence of the dispersive estimate \rfb{dispest} or of the $L^p - L^{p^{\prime}}$  estimate \rfb{strichartzbb} is  that we can construct, as in \cite{weder}, the scattering operator for the nonlinear Schr\"odinger equation with potential.
\end{remark}


While proving Theorem \ref{mainresult} we obtain as results of
independent interest the $L^\infty-$time decay for the high
frequency part of the group and a high frequency perturbation
estimate:
\begin{theorem} \label{high frequency perturbation}
Under the assumptions of Theorem \ref{mainresult} we have
\begin{equation}\label{time decay}
\| e^{itH} \cO (H) \|_{1,\infty}
\leq (A + B \frac{\|V\|_1}{\sqrt{\lambda_0}})|t| ^{-1/2} ,
t \neq 0,
\end{equation}
\begin{equation}\label{perturbation}
\| e^{itH} \cO (H) -  e^{itH_0} \cO (H_0)\|_{1,\infty}
\leq B \frac{\|V\|_1}{\sqrt{\lambda_0}}|t| ^{-1/2} ,
t \neq 0 \, .
\end{equation}
Here $\cO$ is smoothly cutting off the frequencies below
$\lambda_0.$ Expressions of $A,B$ in terms of the cutoff function
but independent of $\lambda_0$ are given in Theorem \ref{high
frequency perturbation section}.

In particular we have for any $f\in L^1({\cal R})$ that
\[
e^{itH} \cO (H)f \rightarrow e^{itH_0} \cO (H_0)f \hbox{ for }
\lambda_0 \rightarrow \infty
\]
uniformly on ${\cal R}$ for every fixed $t>0$.

\end{theorem}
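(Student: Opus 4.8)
The plan is to realise the (purely absolutely continuous) evolution $e^{itH}\cO(H)$ through Stone's formula and the boundary values of the resolvent, to pass to the momentum variable $k=\sqrt{\lambda}$, and then to analyse the resulting oscillatory integral by stationary phase; the whole dependence on $\lambda_0$ and on $V$ is read off from the high-frequency asymptotics of the generalized eigenfunctions. Note first that since $\cO$ removes the frequencies below $\lambda_0>0$, the negative eigenvalues of $H$ are annihilated and $\cO(H)=\cO(H)P_{ac}(H)$, so no separate spectral projection is needed. Writing $R_H(z)=(H-z)^{-1}$, Stone's formula gives, for $f\in L^1({\cal R})$,
\[
e^{itH}\cO(H)f=\frac{1}{2\pi i}\int_0^\infty e^{it\lambda}\,\cO(\lambda)\,\bigl(R_H(\lambda+i0)-R_H(\lambda-i0)\bigr)f\,d\lambda,
\]
and the identical formula holds for $H_0$. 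Because $\cO$ is supported in $[\lambda_0,\infty)$, strictly inside the absolutely continuous spectrum, the limiting absorption principle furnishes a genuine integral kernel for $R_H(\lambda\pm i0)$ there.

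Substituting $\lambda=k^2$, the kernel of $e^{itH}\cO(H)$ becomes a finite sum (over the branches of ${\cal R}$ and over the two signs $\pm$) of oscillatory integrals
\[
K_t(x,y)=\int_{\sqrt{\lambda_0}}^\infty e^{i(tk^2+k\,\psi(x,y))}\,b(k,x,y)\,dk,
\]
where $\psi$ collects the relevant spatial combinations $\pm x\pm y$ and $b$ is assembled from products of the Jost solutions together with the reflection/transmission coefficients determined at the central vertex; the free kernel of $e^{itH_0}\cO(H_0)$ has the same form with $b$ replaced by the free amplitude $b_0$.

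The decisive input is the high-frequency behaviour of the amplitude. On each branch the Jost solutions $m_\pm(x,k)$ tend to $1$ as $k\to\infty$, with $|m_\pm(x,k)-1|$ and the corresponding $k$-derivative bounds controlled by $\|V\|_1/k$ uniformly in $x$ (this is the $N=2$ mechanism of \cite{GS}, adapted to the ramified geometry). Consequently $b=b_0+O(\|V\|_1/k)$ and $b-b_0=O(\|V\|_1/k)$, both uniformly in $(x,y)$ for $k\ge\sqrt{\lambda_0}$, together with the analogous bound on $\partial_k b$. Since the phase $tk^2+k\psi$ has constant second derivative $2t$, van der Corput's lemma (equivalently, the explicit $|t|^{-1/2}$ estimate for Gaussian-type oscillatory integrals) yields $|K_t(x,y)|\le C|t|^{-1/2}\bigl(\|b\|_{\infty}+\|\partial_k b\|_{L^1}\bigr)$. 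Splitting $b=b_0+(b-b_0)$ and taking the supremum over $(x,y)$ produces $\|e^{itH}\cO(H)\|_{1,\infty}\le(A+B\|V\|_1/\sqrt{\lambda_0})|t|^{-1/2}$, which is (\ref{time decay}); applying the same bound to $b-b_0$ alone gives the difference estimate $\|e^{itH}\cO(H)-e^{itH_0}\cO(H_0)\|_{1,\infty}\le B\|V\|_1\lambda_0^{-1/2}|t|^{-1/2}$, which is (\ref{perturbation}). The final uniform-convergence assertion on ${\cal R}$ is then immediate by letting $\lambda_0\to\infty$.

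I expect the main obstacle to be the uniform-in-$x$ control of the amplitude $b$ and of $\partial_k b$ over the whole network. Because the transmission conditions (\ref{t0})--(\ref{t1}) couple the branches, the generalized eigenfunctions are not plain exponentials but carry scattering coefficients fixed at the vertex; one must show that these coefficients, their $k$-derivatives and the Jost corrections all remain bounded and decay like $\|V\|_1/k$ uniformly for $k\ge\sqrt{\lambda_0}$, so that the stationary-phase amplitude bounds hold with constants $A,B$ \emph{independent} of $\lambda_0$ (matching the statement of Theorem \ref{high frequency perturbation}). The remaining technical point is to make the van der Corput estimate robust against the boundary contribution at $k=\sqrt{\lambda_0}$, where the cutoff $\cO$ switches on.
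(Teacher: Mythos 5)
Your overall architecture (Stone's formula, limiting absorption, change of variables $\lambda=k^2$, reduction to oscillatory integrals in $k$) matches the paper's starting point, but the core of your argument — a direct van der Corput/stationary-phase bound with an amplitude $b(k,x,y)$ built from Jost functions and vertex scattering coefficients — has a genuine gap at $k=\infty$, and it is exactly the point you flag but do not resolve. The van der Corput estimate you invoke requires $\|\partial_k b\|_{L^1}$ to be finite over the whole integration region $[\sqrt{\lambda_0},\infty)$, and for the perturbation estimate \rfb{perturbation} you need $\|\partial_k (b-b_0)\|_{L^1(\sqrt{\lambda_0},\infty)}\lesssim \|V\|_1/\sqrt{\lambda_0}$. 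The available high-frequency estimates on the Jost functions give only $|m_{j,+}(k,x)-1|\lesssim \|V\|_1/|k|$ and, for the $k$-derivative (cf. \rfb{estmdot} and the Volterra equation), $|\dot m_{j,+}(k,x)|\lesssim \|V\|_{L^1_1}/|k|$ — a $1/k$ rate that is \emph{not integrable} on $[\sqrt{\lambda_0},\infty)$, and whose constant involves the first moment of $V$ rather than $\|V\|_1$. Since the cutoff $\cO$ equals $1$ on $[2\lambda_0,\infty)$, nothing truncates this divergence, so your bound $|K_t(x,y)|\le C|t|^{-1/2}(\|b\|_\infty+\|\partial_k b\|_{L^1})$ cannot be closed with constants $A,B$ independent of $\lambda_0$ and linear in $\|V\|_1$, which is precisely the content of the theorem. (The real obstruction is at $k=\infty$, not at the smooth switch-on at $k=\sqrt{\lambda_0}$ as you suggest.)

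The paper avoids this by never putting the potential into the amplitude. It expands $R_V(\lambda\pm i0)$ in the Born/Neumann series of Proposition \ref{Born series}, valid for $\lambda>\lambda_*$, inserts this into Stone's formula, and exploits the explicit free kernel \rfb{free kernel}: in each term of the series all dependence on $x$, $x'$ and the intermediate points sits in pure phases $e^{id_n\mu}$, the potential contributes only the factor $\|V\|_1^k$, and the $\mu$-amplitude is the explicit, compactly supported function $\cOL(\mu^2)\mu^{-k}$ (an auxiliary upper cutoff $L$ is kept until the very end and removed by dominated convergence). Each term is then literally a free one-dimensional Schr\"odinger solution, giving the factor $|t|^{-1/2}\|{\mathcal F}^{-1}[\cOL(\lambda^2)\lambda^{-k}]\|_1$, and Theorem \ref{p1} supplies $\|{\mathcal F}^{-1}[\cOL(\lambda^2)\lambda^{-k}]\|_1\le c(k)\lambda_0^{-k/2}$ with $c(k)\lesssim k$, so the series sums geometrically under $\lambda_0>\lambda_*$. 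The perturbation bound \rfb{perturbation} then comes for free, because $e^{itH_0}\cOL(H_0)$ is exactly the $k=0$ term of the expansion, and the tail carries the factor $\tilde q(\lambda_0)\sim\|V\|_1/\sqrt{\lambda_0}$. If you want to salvage a direct stationary-phase route, you would need integrable-in-$k$ derivative bounds on the Jost corrections uniformly on the network, which the hypotheses ($V\in L^1_\gamma$, no smoothness) do not provide; this is why the Born series, not stationary phase with Jost amplitudes, is the correct high-frequency mechanism here (as in \cite{GS}).
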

The perturbation estimate allows the simultaneous control of the
smallness of the difference between perturbed and unperturbed group
in terms of the cutoff frequency, the $L^1-$Norm of the potential
and time.

Because the reflection and refraction of wave packets for the
unperturbed Schr\"{o}dinger equation on the star shaped network is
known (\cite{admi2} for the case of 3 branches), the above
perturbation estimate furnishes an approximate spatial information
on the propagation of high frequency wave packets with explicit
control of the error. Note that the high frequency perturbation
estimate seems to be new even for the Schr\"{o}dinger equation with
potential on the line and represents in this case an improvement of
\cite{GS}. In \cite{GS}, estimate (\ref{time decay}) is furnished,
but without explicit control of the dependence of the coefficient of
$|t|^{-1/2}$ on $\lambda_0$. Without this control estimate
(\ref{perturbation}) is not useful to prove the convergence of the
solution to the free solution.


The paper is organized as follows. The second section deals with a
counterexample which shows that the decay of the Schr\"odinger
operator from $L^1({\cal R})$ to $L^\infty({\cal R})$ as $|t|$ goes
to infinity is not guaranteed for all infinite networks. In section
\ref{free}, we prove the dispersive estimate for the free
Schr\"odinger operator on star-shaped networks and we give some
direct applications. The expansion in generalized eigenfunctions
needed for the proof of Theorem \ref{mainresult}, is given in
section \ref{sec4}. In the last section we give the proof of the
main results of the paper (Theorems \ref{mainresult} and \ref{high
frequency perturbation}).


The main lines of our arguments are the following. The counter
example (section \ref{counterexample}) uses explicit formulas for
eigenfunctions of the laplacian on infinite trees from
\cite{Nicthesis}. The $L^{\infty}$-time decay of the free
Schr\"odinger group on a star shaped network is reduced to the
corresponding estimate on $\RR$ using an appropriate change of
variables (section \ref{free}). The task of finding a complete
family of generalized eigenfunctions for the Schr\"odinger operator
with potential on the star shaped network is reduced to the case of
the real line by separating the branches and  extending the
equations on $\RR$ with vanishing potential. The generalized
eigenfunctions on $\RR$ resulting from techniques from
\cite{deift-trub} are then combined to families on the network by
introducing correction terms to establish the transmission
conditions. Using results of \cite{deift-trub} for the real line
case, we derive estimates showing the dependence of the generalized
eigenfunctions on the potential. This enables us to prove a limiting
absorption principle and then to derive an expansion of the
Schr\"odinger group on the star in these generalized eigenfunctions
(section \ref{sec4}) following \cite{meh,FAMetall10}. The proof of
the $L^{\infty}$-time decay is divided in the low frequency and high
frequency part, essentialy following the lines of \cite{GS}. For the
high frequency components, the potential appears as a small
perturbation: the resolvent of the Schr\"odinger operator can be
expanded in a Neumann type series in terms of the resolvent of the
free Schr\"odinger operator. By inserting this in Stones formula and
exchanging the integration over the frequencies and the summation of
the Neumann series, one reduces the estimate to the free case. For
the low frequency components one uses the expansion in generalized
eigenfunctions derived in section \ref{sec4}, especially the
qualitative knowledge of the dependence of the generalized
eigenfunctions on the potential. This enables us to construct a
representation of the solution as the free Schr\"odinger group
acting on a well chosen (artificial) initial condition, which
encodes the influence of the potential. Then one concludes using the
results on the line.

Our approach does not furnish optimal results, as for example the
estimate in
\cite[p.~60]{ReedSimonII}
for the free Schr\"odinger group or the results of
\cite{L-infinity-proc-IWOTA}. This is due to the fact, that the use
of Neumann type series and qualitative estimates from inverse
scattering theory are to rough for this purpose. We conjecture that
optimal estimates could be achieved in terms of an asymptotic
expansion of first order following the lines of
\cite{L-infinity-proc-IWOTA}, where this problem has been solved for
initial conditions in energy bands for the Klein Gordon equation
with constant but different potentials on a star shaped network. It
might be useful to find a way to represent solutions for general
potentials by approximating these potentials by step functions,
inspired by \cite{borovskikh}.



Note that the general perturbation theory for semigroups
\cite[ch.~9, thm.~2.12, p.~502]{kato}
is applicable but not useful for our purposes: it yields that the
difference between the (semi-)groups generated by the
Schr\"{o}dinger operator with potential and the free one grows at
most proportionally to $t$, which engulfs the time decay at
infinity. Nevertheless it furnishes additional information for small
$t$.

The Trotter product formula \cite[thm.~X.51, p.~245]{ReedSimonII} is
also applicable, but cannot establish  $L^\infty-$time decay either:
it consists of an approximation of the perturbed group by long
alternating compositions of values of the free Schr\"{o}dinger group
$e^{it H_0}$ and the group of multiplication operators with
$e^{itV}$ but for small values of $t$. Thus even the explicit
knowledge of the kernel of the free Schr\"{o}dinger group is not
useful for time-decay, because the factor $t^{-1/2}$ becomes
effective only for large $t$.

The direct application of the variation of constants formula leads
to the same phenomenon as the perturbation for semigroups: without a
refined study of the superposition of the waves generated by the
potential, the rough estimation of the integral term leads to a
bound growing as a constant times $t$.

In \cite{banica} the authors prove dispersive estimates for
Schr\"{o}dinger equations on infinite trees with semi-infinite ends
with Kirchhoff conditions at the nodes. The equations do not have a
potential, but the operator has piecewise constant coefficients with
finitely many discontinuities on each branch. The coefficients are
bounded between two values. Here the difficulty comes from the
necessity to give a recursive formula for the infinitely many terms
of the resolvent of the operator. The inverse of the Wronskian is
estimated using the theory of almost periodic functions. In
\cite{banica:03}, \cite{IgnatZuazua} the authors study the
dispersion for the Schr\"{o}dinger equation on the line with
irregular coefficients.


In \cite{admi1} the authors consider Schr\"{o}dinger equations with
attractive cubic nonlinearities on a star-shaped network with three
branches. At the node they consider Kirchhoff-conditions, $\delta$-
or $\delta'$-conditions. They indicate that the equation arises in
quantum field theory, in the description of the Bose-Einstein
condensates and electromagnetic pulse propagation in optical fibers.
The Kirchhoff condition corresponds to a simple coupling ("beam
splitter"), whereas the $\delta-$condition describes the interaction
with a point-potential. The authors obtain charge and energy
conservation laws and deduce from these  facts conditions for global
in time existence of solutions. Further they treat the existence
life time of solitary waves and prove that their transmission and
reflection at the node is governed by the associated linear laws,
due to the shortness of the interaction time with a point-shaped
potential. However the authors do not consider variable potentials
on the branches as it is done in our paper. Therefore the linear
part of their paper has no substantial intersection with our setting
but might motivate further studies.

In \cite{admi2} an analogous setting as in \cite{admi1} is
considered, but with nonlinearities of order $2\mu + 1$ and only the
$\delta$-potential of strength $\alpha$ at the node. The existence
of stationary solitons in both the attractive ($\alpha<0$) and
repulsive ($\alpha>0$) case is proved. Again there is no significant
interference with our results.

In \cite{banicab} the authors consider free (linear) Schr\"{o}dinger
equations on tree-shaped networks with $\delta$-potentials at the
nodes. As a special case appears the star-shaped network with a
delta-potential at the center. In this setting a
$L^1-L^{\infty}$-decay estimate is proved. Due to the fact that the
$\delta$-potential plays the role of a transmission condition, the
methods are those for a problem with constant coefficients, and
therefore there is only a marginal interference with our results.
Nevertheless the result is instructive. The authors add the
existence and uniqueness of a global in time solution of the same
problem with a (attractive or repulsive) power nonlinearity of order
$p+1$.

The paper \cite{kost} deals with the general question of
constructing generalized eigenfunctions of all possible self adjoint
extensions of the Laplacian on networks with semi infinite ends. The
result is formulated in terms of a so called scattering matrix,
which indicates the reflected and transmitted flow for the
stationary problem. For complicated networks the authors construct a
product formula linking the scattering matrices of sub networks to
the scattering matrix of the original network. The results of this
article could serve to generalize our results to star shaped
networks with general transmission conditions.

The article \cite{nak} considers discrete analogs of nonlinear
Schr\"{o}dinger equations on star-shaped networks including the
existence of solitons, constants of motion and the calculus of
transmission probabilities.

Finally  \cite{sab} treats the stationary (cubic) nonlinear
Schr\"{o}dinger equation for simple but more general networks as the
star shaped ones as trees or helices. Explicit solution formulas are
obtained.

The last two papers are instructive for further developments of our
approach.

\noindent
{\bf Achnowledgements:}\\
The authors thank the referees for many valuable remarks which
helped us to improve the paper significantly.


\section{A counterexample} \label{counterexample}
Consider the infinite network $\caR= \ds \cup_{n\in \NN} e_n$, where each
edge $e_n=(n, n+1)$ with the set of vertices $\caV= \ds \cup_{n\in \NN}
v_n$, where $v_n=\{n\}$. For a fixed sequence of positive real
numbers $\alpha=(\alpha_n)_{n\in \NN}$, we define the Hilbert space
$L^2(\caR,\alpha)$ as follows
\[
L^2(\caR,\alpha)=\{u=(u_n)_{n\in \NN}: u_n\in L^2(e_n) \forall n\in
\NN \hbox{ such that } \sum_{n\in \NN}
\alpha_n\int_{e_n}|u_n(x)|^2\,dx <\infty\},
\]
equipped with the inner product
\[
(u,v)=\sum_{n\in \NN}\alpha_n\int_{e_n}  u_n(x)v_n(x)\,dx, \quad
\forall u,v\in L^2(\caR,\alpha).
\]
Similarly for all $k\in \NN^*$, we set
\[
H^k(\caR,\alpha)=\{u=(u_n)_{n\in \NN}\in L^2(\caR,\alpha):
(u_n^{(\ell)})_{n\in \NN}\in  L^2(\caR,\alpha) \ \forall\ell\in
\{1,2, \ldots, k\}\},
\]
where $u_n^{(\ell)}$ means the $\ell$ derivative of $u_n$ with
respect to $x$.

Now we consider the Laplace operator $-\Delta_\alpha$ (depending on
$\alpha$) as follows:
\[
{\cal D}(-\Delta_\alpha)=\{u=(u_n)_{n\in \NN}\in H^2(\caR,\alpha): \hbox{
satisfying } (\ref{H0}), (\ref{H1c}), (\ref{H1K}) \hbox{ below }\},
\]
\beq\label{H0} &&u_0(0)=0,\\
\label{H1c} &&u_n(n+1)=u_{n+1}(n+1), \forall n\in \NN,\\
\label{H1K} &&\alpha_n \frac{du_n}{dx}(n+1) =\alpha_{n+1} \frac{du_{n+1}}{dx}(n+1), \forall
n\in \NN. \eeq For all $u\in {\cal D}(-\Delta_\alpha)$, we set
\[
-\Delta_\alpha u=(- \frac{d^2u_n}{dx^2})_{n\in \NN}.
\]
By section 1.5 of \cite{Nicthesis}, this operator is a non negative
self-adjoint operator in   $L^2(\caR,\alpha)$.

Moreover in  Theorem 1.13 of \cite{Nicthesis} it was shown the
\begin{theorem}\label{tnic} For all  $k\in \NN^*$, $-k^2\pi^2$ is a simple
eigenvalue of $-\Delta_\alpha$ if and only if \be\label{condtnic}
s=\sum_{n\in \NN}\frac{1}{\alpha_n}<\infty. \ee In that case the
associated orthonormal eigenvector $\varphi^{[k]}=(\varphi^{[k]})_{n
\in \NN}$ is given by
\[
\varphi^{[k]}_{n}=\sqrt{\frac{2}{s}} \frac{(-1)^{(n-1)k}}{\alpha_n}
\sin (k\pi (x-n)), \forall x\in e_n, n\in \NN.
\]
\end{theorem}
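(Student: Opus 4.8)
The plan is to reduce the eigenvalue problem to a scalar recursion on the coefficients describing $u$ edge by edge. On each edge $e_n=(n,n+1)$ the equation $-u_n''=k^2\pi^2u_n$ has the two-dimensional solution space spanned by $\cos(k\pi(x-n))$ and $\sin(k\pi(x-n))$, so I would write
\[
u_n(x)=A_n\cos(k\pi(x-n))+B_n\sin(k\pi(x-n)),\quad x\in e_n,
\]
and the whole problem becomes the determination of the sequences $(A_n)$, $(B_n)$. The relevant eigenvalue is $k^2\pi^2>0$, consistent with the non-negativity of $-\Delta_\alpha$. The crucial point, and the reason this value is special, is that $\sin(k\pi)=0$ and $\cos(k\pi)=(-1)^k$, so the transfer of data across each vertex becomes diagonal.

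Next I would translate the three conditions (\ref{H0})--(\ref{H1K}) into relations between consecutive coefficients. Evaluating continuity (\ref{H1c}) at $n+1$ gives $A_{n+1}=(-1)^kA_n$, while the Kirchhoff condition (\ref{H1K}) gives $B_{n+1}=(-1)^k\frac{\alpha_n}{\alpha_{n+1}}B_n$; the $A$--$B$ coupling that would occur for a generic eigenvalue disappears precisely because $\sin(k\pi)=0$. The Dirichlet condition (\ref{H0}) forces $A_0=0$, and then the first recursion yields $A_n\equiv 0$. Telescoping the second recursion from $B_0$ produces the product formula
\[
B_n=(-1)^{nk}\frac{\alpha_0}{\alpha_n}B_0,
\]
so that $u$ is determined up to the single scalar $B_0$ and has exactly the form claimed for $\varphi^{[k]}$ (the sign $(-1)^{(n-1)k}$ versus $(-1)^{nk}$ being an immaterial global phase absorbed into $B_0$).

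It then remains to decide membership in $L^2(\caR,\alpha)$ and to normalize. Using $\int_0^1\sin^2(k\pi t)\,dt=\half$ I would compute
\[
\|u\|^2=\sum_{n\in\NN}\alpha_n\int_{e_n}|u_n|^2\,dx=\frac{\alpha_0^2|B_0|^2}{2}\sum_{n\in\NN}\frac{1}{\alpha_n}=\frac{\alpha_0^2|B_0|^2}{2}\,s,
\]
so the candidate is a nonzero $L^2$-function if and only if $s<\infty$; the same computation applied to $u'$ and $u''$ (both series again proportional to $s$) shows $u\in H^2(\caR,\alpha)$ under the same condition, so $u$ indeed lies in ${\cal D}(-\Delta_\alpha)$. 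Choosing $|B_0|=\alpha_0^{-1}\sqrt{2/s}$ yields the normalized vector of the statement. Simplicity is then immediate: every eigenfunction for $k^2\pi^2$ must arise from the above construction, and once $A_0=0$ is imposed the only remaining freedom is the scalar $B_0$, so the eigenspace is one-dimensional. Conversely, if $s=\infty$ the only solution of the recursions lying in $L^2(\caR,\alpha)$ is the zero function, so $k^2\pi^2$ is not an eigenvalue.

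The computation is largely mechanical; the only genuinely structural step is the observation that the value $k^2\pi^2$ makes the vertex transfer maps diagonal, which decouples $A$ from $B$ and turns the $L^2$-condition into the clean convergence criterion $\sum_n\alpha_n^{-1}<\infty$. The one point requiring a little care is verifying domain membership, i.e.\ square-summability of $u$, $u'$ and $u''$ in the weighted norm, rather than merely solving the formal eigenvalue equation; but since all three series are proportional to $s$, this presents no real difficulty.
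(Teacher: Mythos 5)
Your proof is correct and complete. Note that there is nothing in the paper to compare it against: the paper states this result without proof, quoting it verbatim as Theorem 1.13 of the thesis \cite{Nicthesis}, so your argument serves as a self-contained verification. Your route --- writing $u_n(x)=A_n\cos(k\pi(x-n))+B_n\sin(k\pi(x-n))$ on each edge, observing that the vertex transfer maps are diagonal precisely because $\sin(k\pi)=0$, deriving the recursions $A_{n+1}=(-1)^kA_n$, $B_{n+1}=(-1)^k\frac{\alpha_n}{\alpha_{n+1}}B_n$, killing the $A$'s with the Dirichlet condition, and converting membership in $L^2(\caR,\alpha)$ into convergence of $\sum_n\alpha_n^{-1}$ --- is the natural argument and is almost certainly the one in the thesis. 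Two points in your write-up deserve emphasis as genuinely necessary rather than cosmetic: (i) the statement as printed has a sign slip (since $-\Delta_\alpha$ is non-negative, the eigenvalue is $+k^2\pi^2$; equivalently $-k^2\pi^2$ is an eigenvalue of $\Delta_\alpha$), and you read it the only way consistent with the paper's later use $u(t)=e^{-itk^2\pi^2}\varphi^{[k]}$ and with the displayed eigenfunction; (ii) your check that $u'$ and $u''$ also lie in the weighted $L^2$ space, with norms again proportional to $s$, is what legitimately places the candidate in ${\cal D}(-\Delta_\alpha)$ and ensures that both directions of the equivalence (existence when $s<\infty$, non-existence when $s=\infty$) are decided by the single series $\sum_n\alpha_n^{-1}$, together with the one-dimensional freedom in $B_0$ that gives simplicity.
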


Now assuming that (\ref{condtnic}) holds, then for any $k\in \NN^*$
we consider the solution $u$ of the Schr\"odinger equation
\[
\left\{ \begin{array}{ll}  \partial_t u-i\Delta_\alpha u=0 ,\\
u(t=0)=\varphi^{[k]},
\end{array}
\right.
\]
or equivalently solution of
\[
\left\{ \begin{tabular}{llll}  $\partial_t u_n-i \partial^2_x u_n=0 ,$&in &$e_n\times \RR$,\\
$u_0(0, t)=0,$ &on & $\RR$,\\
$u_n(n+1, t)=u_{n+1}(n+1, t)$&on & $\RR, \forall n\in \NN,$\\
$\alpha_n u'_n(n+1, t)=\alpha_{n+1}u'_{n+1}(n+1, t)$&on & $\RR, \forall n\in \NN,$\\
$u(t=0, \cdot)=\varphi^{[k]}$  &on &$\caR.$
\end{tabular}
\right.
\]
This solution is given by $u(t)=e^{-it k^2\pi^2} \varphi^{[k]}$.
Moreover  simple calculations show that \[ \|u(t)\|_{\infty,
\caR}=\sqrt{\frac{2}{s}}\sup_{n\in \NN} \frac{1}{\alpha_n}\|\sin
(k\pi (\cdot-n))\|_{\infty, e_n}=\sqrt{\frac{2}{s}}\sup_{n\in \NN}
\frac{1}{\alpha_n}, \] which is independent of $t$ and then does not
tend to zero as $|t|$ goes to infinity. On the other hand $u(t=0,
\cdot)$ belongs to $L^1(\caR)$, since  we have
\[ \|u(t)\|_{L^1(\caR)}=\sqrt{\frac{2}{s}}\sum_{n\in \NN} \frac{1}{\alpha_n}\|\sin
(k\pi (\cdot-n))\|_{L^1(e_n)}\leq \sqrt{{2}{s}}.
\]
In other words, we have proved the
\begin{theorem}\label{tnic2}
If (\ref{condtnic}) holds, then the  norm of the Schr\"odinger
operator $e^{it\Delta_\alpha}$ from $L^1(\caR)$ to $L^\infty(\caR)$
does not tend to zero as $|t|$ goes to infinity.
\end{theorem}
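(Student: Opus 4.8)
The plan is to disprove decay by exhibiting, under hypothesis (\ref{condtnic}), a single initial datum that the group $e^{it\Delta_\alpha}$ fails to spread out. If I can find $\phi\in L^1(\caR)$ with $\|\phi\|_{L^1(\caR)}<\infty$ such that $\|e^{it\Delta_\alpha}\phi\|_{\infty,\caR}$ stays bounded away from $0$ uniformly in $t$, then
\[
\big\|e^{it\Delta_\alpha}\big\|_{L^1(\caR)\to L^\infty(\caR)}\;\geq\;\frac{\|e^{it\Delta_\alpha}\phi\|_{\infty,\caR}}{\|\phi\|_{L^1(\caR)}}\;\geq\;c>0
\]
for a constant $c$ independent of $t$, which immediately forbids the norm from tending to zero. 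The natural candidate for $\phi$ is the explicit eigenvector $\varphi^{[k]}$ of $-\Delta_\alpha$ furnished by Theorem \ref{tnic}, for any fixed $k\in\NN^*$; assumption (\ref{condtnic}) is precisely what makes $\varphi^{[k]}$ and the number $s$ well defined.

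First I would exploit that $\varphi^{[k]}$ is an eigenvector of the self-adjoint operator $-\Delta_\alpha$: the Schr\"odinger evolution then acts on it by multiplication with a time-dependent scalar of modulus one, so that $|e^{it\Delta_\alpha}\varphi^{[k]}(x)|=|\varphi^{[k]}(x)|$ for every $x\in\caR$ and every $t$. Consequently the spatial supremum is literally constant in time, equal to $\sqrt{2/s}\,\sup_{n}\alpha_n^{-1}$. This quantity is strictly positive because all $\alpha_n>0$, and it is finite because the summability $\sum_n\alpha_n^{-1}<\infty$ in (\ref{condtnic}) forces $\alpha_n^{-1}\to 0$, so the supremum is a genuine finite maximum. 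The sine factors contribute nothing here, since $|\sin(k\pi(\cdot-n))|$ attains the value $1$ on each full edge $e_n=(n,n+1)$.

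Next I would verify that this same datum lies in $L^1(\caR)$, which is again exactly the summability condition (\ref{condtnic}). Integrating $|\sin(k\pi(\cdot-n))|$ over a single edge gives a fixed finite constant independent of $n$, so summing against the weights $\alpha_n^{-1}$ produces $\|\varphi^{[k]}\|_{L^1(\caR)}<\infty$, with an explicit finite bound in terms of $s$. Thus $\varphi^{[k]}$ is an admissible initial condition for the $L^1\to L^\infty$ estimate.

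Combining the two computations, the ratio of the (constant) $L^\infty$ norm to the finite $L^1$ norm is a strictly positive constant independent of $t$, giving the desired lower bound on the operator norm and hence the non-decay. I expect no real obstacle in the analysis itself, which is elementary; the only genuine ingredient is the spectral input of Theorem \ref{tnic}. It is worth emphasising that (\ref{condtnic}) plays a double role in the argument, simultaneously guaranteeing the existence of the localized eigenvector and ensuring that this eigenvector belongs to $L^1(\caR)$, which is what allows the non-dispersive behaviour to be detected at the level of the $L^1\to L^\infty$ norm.
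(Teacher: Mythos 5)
Your proposal is correct and follows essentially the same route as the paper: it takes the explicit eigenvector $\varphi^{[k]}$ from Theorem \ref{tnic} (whose existence is exactly what (\ref{condtnic}) guarantees), observes that the evolution multiplies it by a unimodular scalar so that $\|e^{it\Delta_\alpha}\varphi^{[k]}\|_{\infty,\caR}=\sqrt{2/s}\,\sup_n\alpha_n^{-1}$ is constant in time, checks $\varphi^{[k]}\in L^1(\caR)$ via the summability of $\alpha_n^{-1}$, and concludes by the lower bound on the operator norm. The only (harmless) additions are your explicit remarks that $\sup_n\alpha_n^{-1}<\infty$ and that the sine factor attains $1$ on each edge, which the paper leaves implicit.
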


This counterexample shows that the decay of the  norm of the
Schr\"odinger operator   from $L^1({\cal R})$ to $L^\infty(\caR)$
as $|t|$ goes to infinity is not guaranteed for all infinite
networks. Hence the remainder the paper is to give some examples
where such a case occurs.

Let us notice that our non dispersive property comes from the
infinite numbers of discontinuities of the coefficient, since for a
finite number of discontinuities or BV coefficient with a  small
variation of the coefficients, the dispersive property holds, see
\cite{banica:03,IgnatZuazua}.

\section{Dispersive estimate for free Schr\"odinger operator on star-shaped networks} \label{free}

In this section we state the $L^\infty-$time decay estimate for the
free Schr\"{o}dinger equation (and some consequences) on   star
shaped networks. For completeness we give the proof, although it is
essentially the same as in \cite{admi1,ignat}.

\begin{theorem} (Dispersive estimate) \\
For all $t \neq 0$,
\be
\label{dispestf}
\left\|e^{itH_0} \right\|_{L^1({\cal R}) \rightarrow L^\infty({\cal R})} \leq C \, \left|t\right|^{- 1/2},
\ee
where $C > 0$ is a constant.
\end{theorem}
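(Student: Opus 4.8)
The plan is to reduce the dispersive estimate on the star-shaped network $\mathcal{R}$ to the classical dispersive estimate for the free Schr\"odinger group on the line $\mathbb{R}$, where the $|t|^{-1/2}$ decay is known explicitly from the Gaussian kernel (see \cite[p.~60]{ReedSimonII}). The key structural observation is that the free operator $H_0 = -\Delta_{\mathcal{R}}$ decomposes spectrally via the transmission conditions \rfb{t0} and \rfb{t1}, and that these Kirchhoff-type conditions at the central vertex are exactly those under which the branches can be glued together into an effective problem on the line. First I would diagonalize $H_0$ using the appropriate spectral transform for the star graph: one constructs the generalized eigenfunctions of $-\Delta_{\mathcal{R}}$ (plane-wave solutions $e^{\pm i\sqrt{\lambda}x}$ on each branch, matched through \rfb{t0} and \rfb{t1}), which yields an explicit Fourier-type representation of $e^{itH_0}$ with an integral kernel on each pair of branches.

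Concretely, I would write the kernel of $e^{itH_0}$ acting between branch $R_j$ and branch $R_k$. Because the transmission conditions are symmetric in the branches, the kernel splits into a "diagonal" part, which looks like the kernel of the free line restricted to the half-line (a sum of the free kernel and its reflection, as in the half-line Dirichlet or Neumann case treated in \cite{wederb}), and an "off-diagonal" transmission part carrying a constant coefficient (here $-1 + 2/N$ or $2/N$, depending on whether $j=k$) coming from solving the $N$-branch matching at the node. In every case each kernel entry is a finite linear combination of terms of the form $\frac{1}{\sqrt{4\pi i t}}\, e^{i(x\pm y)^2/(4t)}$, i.e.\ translates/reflections of the free line kernel. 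The pointwise bound $\left|\frac{1}{\sqrt{4\pi i t}}\right| \leq C|t|^{-1/2}$ then holds term by term, so the kernel of $e^{itH_0}$ is bounded by $C|t|^{-1/2}$ uniformly in $x,y \in \mathcal{R}$.

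From such a uniform kernel bound, the $L^1 \to L^\infty$ estimate \rfb{dispestf} follows immediately by the standard Schur/Young argument: for $u = (u_k) \in L^1(\mathcal{R})$ one has $\left(e^{itH_0}u\right)_j(x) = \sum_{k=1}^N \int_{R_k} G^{jk}_t(x,y)\, u_k(y)\, dy$, and bounding $|G^{jk}_t(x,y)| \leq C|t|^{-1/2}$ gives $\|e^{itH_0}u\|_{L^\infty(\mathcal{R})} \leq C|t|^{-1/2}\|u\|_{L^1(\mathcal{R})}$. An equivalent and perhaps cleaner route, which the authors likely follow given their remark that the proof mirrors \cite{admi1,ignat}, is to avoid computing the kernel explicitly and instead use a change of variables separating branches together with the spectral representation, reducing directly to the known decay on $\mathbb{R}$; this sidesteps carrying the reflection/transmission coefficients through the whole computation.

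The main obstacle I anticipate is the bookkeeping at the node: unlike the half-line case ($N=2$), for general $N$ one must actually solve the linear matching system imposed by \rfb{t0}--\rfb{t1} to extract the correct reflection and transmission coefficients, and one must verify that the resulting finite combination of shifted free kernels still obeys a clean $|t|^{-1/2}$ bound with a constant $C$ that does not blow up with $N$. The continuity condition \rfb{t0} forces all branch values to agree at $0$ while the current-conservation condition \rfb{t1} fixes the sum of the derivatives, and solving these simultaneously is where the branching character of the network genuinely enters. Once the coefficients are in hand, however, the decay estimate itself is routine, since each constituent term is a rigid modification of the scalar free Schr\"odinger kernel whose $L^1\to L^\infty$ norm is classically $(4\pi|t|)^{-1/2}$.
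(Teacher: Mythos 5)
Your proposal is correct, but your primary argument runs along a genuinely different route from the paper's. You bound the propagator kernel of $e^{itH_0}$ directly: for Kirchhoff conditions it is $K_t(x-y)+\bigl(\frac{2}{N}-1\bigr)K_t(x+y)$ when $x,y$ lie on the same branch and $\frac{2}{N}K_t(x+y)$ across distinct branches, with $K_t(u)=(4\pi it)^{-1/2}e^{iu^2/(4t)}$; since $\bigl|\frac{2}{N}-1\bigr|\leq 1$ and $\frac{2}{N}\leq 1$, this gives the pointwise bound $2(4\pi|t|)^{-1/2}$, and the $L^1\to L^\infty$ estimate follows with a constant uniform in $N$ (the sum over source branches is absorbed into $\|u\|_{L^1({\cal R})}$, so the blow-up with $N$ that you worry about cannot occur). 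The paper never writes a kernel: it sets $w_1=\sum_{j=1}^N v_j$ and $w_j=v_j-v_1$ for $j\geq 2$, observes that the Kirchhoff system \rfb{t0}--\rfb{t1} decouples into one Neumann half-line problem for $w_1$ (solved by reflection to the full line, where the classical estimate of \cite[p.~60]{ReedSimonII} applies; the paper says ``odd'' reflection, though even reflection is what a Neumann condition requires) and $N-1$ Dirichlet half-line problems for $w_j$ (handled by Theorem 2.1 of \cite{wederb}), and then recombines via $v_1=\frac{1}{N}w_1-\frac{1}{N}\sum_{j\geq 2}w_j$ and $v_j=w_j+v_1$; this is essentially the alternative route you sketch in your third paragraph. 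What each approach buys: your kernel computation yields a sharp explicit constant and an object that is reusable downstream (the paper itself needs the analogous explicit resolvent kernel, quoted from \cite{meh} in Proposition \ref{Born series}, for its high-frequency Born-series argument), but it obliges you to actually solve the vertex matching system and verify \rfb{t0}--\rfb{t1} for the proposed kernel, which is the one step you assert rather than carry out (it is routine: continuity at the vertex follows from the evenness of $K_t$, and the Kirchhoff condition from the identity $\bigl(\frac{2}{N}-2\bigr)+(N-1)\frac{2}{N}=0$); the paper's substitution sidesteps all vertex algebra and leans only on known line and half-line results, at the cost of a cruder constant ($8C$ after recombination) and of producing no explicit kernel.
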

\begin{proof}
Let $v_j, \, j=1,..,.,N,$ a solution of the following problem
$$
\left\{
\begin{array}{ll}
\partial_t v_{j} = - i \partial^2_x v_{j}, \, \rline^+ \times \rline^+, \\
v_j(t,0) = v_1(t,0), \, \ds \sum_{j=1}^N \partial_x v_{j} (t,0) = 0, \, \rline^+,\\
v_j(0,x) = v_{j}^0(x), \, \rline^+.
\end{array}
\right.
$$
If we denote by $w_1 = \ds \sum_{j=1}^N v_j$ and $w_j = v_j - v_1, \, \forall \ , j=2,...,N.$

Then $w_1$ satisfies
$$
\left\{
\begin{array}{ll}
\partial_t w_{1} = - i \partial_x^2 w_{1}, \, \rline^+ \times \rline^+, \\
\partial_x w_{1}(t,0) = 0, \, \rline^+,\\
w_1(0,x) = \ds \sum_{j=1}^N v_{j}^0 (x), \, \rline^+,
\end{array}
\right.
$$

and $w_j, \, j=2,...,N,$ satisfies the following problem
$$
\left\{
\begin{array}{ll}
\partial_t w_{j} = - i \partial^2_x w_{j}, \, \rline^+ \times \rline^+, \\
w_{j}(t,0) = 0, \, \rline^+,\\
w_j(0,x) = v_{j}^0 (x) - v_{1}^0(x), \, \rline^+.
\end{array}
\right.
$$
By an odd reflection transformation applied to $w_1$, we obtain
$
\tilde{w}_1(t,x) = \left\{
\begin{array}{ll}
w_1(t,x), \, x > 0, \\
- \, w_1(t,- x), \, x < 0,
\end{array}
\right. $ which verifies
$$
\left\{
\begin{array}{ll}
\partial_t \tilde{w}_{1} = - i \, \partial^2_x \tilde{w}_{1}, \, \rline^2, \\
\tilde{w}_1(0,x) = \ds \sum_{j=1}^N \tilde{v}_{j}^0 (x), \, \rline,
\end{array}
\right.
$$
where $
\tilde{v}_{j}^0 = \left\{
\begin{array}{ll}
v_{j}^0(x), \, x > 0, \\
- \, v_{j}^0(- x), \, x < 0,
\end{array}
\right., \, j=1,...,N. $ So, according to the dispersive estimate
for Schr\"odinger operator on the line (see \cite{kato} or
\cite{ReedSimonII} for more details), we have \be
\label{estdispline} \left\| w_1 \right\|_{L^\infty(\rline^+)} \leq
\left\| \tilde{w}_1 \right\|_{L^\infty(\rline)} \leq C \, \left| t
\right|^{-\half} \, \left\| \ds \sum_{j=1}^N
\tilde{v}_{j}^0\right\|_{L^1(\rline)}, \, \forall \, (v_{j}^0) \in
L^2({\cal R}) \cap L^1({\cal R}), \ee where $C > 0$ is a constant.

Which implies
$$
\left\| w_1 \right\|_{L^\infty(\rline^+)} \leq 2 \, C \,
\left| t \right|^{-\half} \, \left\| \ds \sum_{j=1}^N v_{j}^0\right\|_{L^1(\rline^+)}, \, \forall \, (v_{j}^0) \in
L^2({\cal R}) \cap L^1({\cal R}).
$$
For $j=2,...,N$, we notice that  $w_j$ is solution of the free  Schr\"odinger equation on the half-line,
hence by Theorem 2.1 of \cite{wederb}, we get
\be
\label{estdisplineb}
\left\| w_j \right\|_{L^\infty(\rline_x^+)}  \leq  C \,
\left| t \right|^{-\half} \, \left\| v_{j}^0 -v_{1}^0 \right\|_{L^1(\rline^+)}, \, \forall \, (v_{j}^0) \in
L^2({\cal R}) \cap L^1({\cal R}),
\ee
where $C > 0$ is a constant.

Since, $v_j = w_j + v_1, \, \forall \, j=2,...,N$
and $v_1 + \ds \sum_{j=2}^N \left( w_j + v_1 \right) = w_1 \Rightarrow v_1 = \frac{1}{N} \, w_1 - \frac{1}{N} \, \ds \sum_{j=2}^N  w_j$.

Thus \rfb{estdispline}-\rfb{estdisplineb} imply that
\be
\label{estdisplinebb}
\left\| v_1 \right\|_{L^\infty(\rline^+)} \leq  \frac{4 C}{N} \,
\left| t \right|^{-\half} \, \ds \sum_{j=2}^N \left(\left\|v_{j}^0 \right\|_{L^1(\rline^+)} + \left\|v_{1}^0 \right\|_{L^1(\rline^+)} \right), \, \forall \, (v_{j}^0) \in
L^2({\cal R}) \cap L^1({\cal R}),
\ee
where $C > 0$ is a constant.

According to  the above we have
\be
\label{estdisplinebbv}
\left\| v_1 \right\|_{L^\infty(\rline^+)} \leq  4 C \,
\left| t \right|^{-\half} \, \ds \sum_{j=1}^N \left\|v_{j}^0 \right\|_{L^1(\rline^+)},
\ee
and
$$
\left\|v_j \right\|_{L^\infty(\rline^+)} \leq \left\|w_j \right\|_{L^\infty(\rline^+)}
+ \left\|v_1 \right\|_{L^\infty(\rline^+) } \leq  2C \, \left| t \right|^{-\half} \, \left( \left\|v_{j}^0 \right\|_{L^1(\rline^+)} + \left\|v_{1}^0 \right\|_{L^1(\rline^+)} \right) +
$$
\be
\label{estdisplinebbvv}
 4C \, \left|t\right|^{-\half} \, \ds \sum_{j=1}^N \left\|v_{j}^0 \right\|_{L^1(\rline^+)}, \, \forall \, (v_{j}^0) \in
L^2({\cal R}) \cap L^1({\cal R}),
\ee
$\Rightarrow$
\be
\label{estdisplinebbvvw}
\left\| v_j \right\|_{L^\infty(\rline^+)} \leq   8 \, C \,
\left| t \right|^{-\half}  \ds \sum_{j=1}^N \left\|v_{j}^0 \right\|_{L^1(\rline^+)}, \, \forall \, (v_{j}^0) \in
L^2({\cal R}) \cap L^1({\cal R}), \, \forall \, j \geq 2.
\ee
Finally we obtain for all $t \neq 0, \, (v_{j}^0) \in
L^2({\cal R}) \cap L^1({\cal R}),$
\be
\label{estdisplinebbvvwf}
\left\| (v_j) \right\|_{L^\infty({\cal R})} \leq  8 \, C \,
\left| t \right|^{-\half}  \ds \sum_{j=1}^N \left\|v_{j}^0 \right\|_{L^1(\rline^+)}= 8 \, C \,
\left| t \right|^{-\half}  \left\|(v_{j}^0) \right\|_{L^1({\cal R})},
\ee
which implies \rfb{dispestf}.
\end{proof}

As a direct consequence of the dispersive estimate for the free Schr\"odinger operator on a star-shaped network, we can obtain the following Strichartz estimates (for a direct proof, see \cite{ignat})
\begin{corollary} ($L^p - L^{p^\prime}$ estimate) \\
For $1 \leq p \leq 2$ and $\frac{1}{p} + \frac{1}{p^\prime} = 1$ we have for all $t \neq 0$,
\be
\label{strichartz}
\left\|e^{itH_0} \right\|_{L^p({\cal R}) \rightarrow L^{p^\prime}({\cal R})} \leq C \, \left|t\right|^{-\frac{1}{p} + \half},
\ee
where $C > 0$ is a constant.
\end{corollary}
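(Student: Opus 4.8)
The plan is to obtain \rfb{strichartz} by interpolating between two endpoint estimates for the unitary group $e^{itH_0}$ via the Riesz--Thorin theorem. The two endpoints are the following. At $p=2$, since $H_0$ is self-adjoint, $e^{itH_0}$ is unitary on $L^2({\cal R})$, so $\|e^{itH_0}\|_{L^2({\cal R})\to L^2({\cal R})}=1$; note that the target exponent $-\frac1p+\half$ equals $0$ for $p=2$, consistent with this bound. At $p=1$, the dispersive estimate \rfb{dispestf} just established gives $\|e^{itH_0}\|_{L^1({\cal R})\to L^\infty({\cal R})}\le C|t|^{-\half}$, and here $-\frac1p+\half=-\half$, again matching. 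Both estimates hold on the same $\sigma$-finite measure space ${\cal R}$ (a finite union of copies of $(0,+\infty)$ carrying Lebesgue measure), so the hypotheses of Riesz--Thorin are met with no modification for the network structure.

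First I would fix $\theta\in[0,1]$ and define the interpolated exponents by $\frac1p=(1-\theta)\cdot 1+\theta\cdot\half=1-\frac\theta2$ and $\frac1{p'}=(1-\theta)\cdot 0+\theta\cdot\half=\frac\theta2$; these automatically satisfy $\frac1p+\frac1{p'}=1$, and as $\theta$ runs over $[0,1]$ the index $p$ sweeps the full range $[1,2]$ with $\theta=2(1-\frac1p)$. Riesz--Thorin then yields, for the operator $e^{itH_0}$,
\be
\|e^{itH_0}\|_{L^p({\cal R})\to L^{p'}({\cal R})}\le \big(C|t|^{-\half}\big)^{1-\theta}\cdot 1^{\theta}=C^{1-\theta}|t|^{-(1-\theta)/2}.
\ee
The only point that requires care is the bookkeeping of the power of $|t|$: substituting $\theta=2(1-\frac1p)$ gives $(1-\theta)/2=\frac1p-\half$, so the exponent of $|t|$ is exactly $-\frac1p+\half$ as claimed, and absorbing $C^{1-\theta}$ into a new constant (bounded uniformly for $p\in[1,2]$) completes the estimate.

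There is no substantial obstacle here; the argument is the standard dispersive-to-Strichartz interpolation, and the network geometry enters only through the two endpoint bounds, which are already available (unitarity for the $L^2$ endpoint and \rfb{dispestf} for the $L^1\to L^\infty$ endpoint). The sole thing to double-check is that the Riesz--Thorin exponents are matched correctly so that the resulting time-decay rate $-\frac1p+\half$ comes out right, which the computation above confirms.
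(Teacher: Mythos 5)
Your proposal is correct and follows essentially the same route as the paper: the paper also obtains \rfb{strichartz} by interpolating the dispersive bound \rfb{dispestf} with the $L^2$ unitarity $\left\|e^{itH_0} f \right\|_{2} = \left\|f \right\|_{2}$, which is exactly your Riesz--Thorin argument. Your version merely makes the exponent bookkeeping explicit, which the paper leaves implicit.
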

\begin{proof}
According to \rfb{dispestf} we have
$$
\ds \sup_{t \neq 0} \left|t\right|^{\half} \, \left\|e^{itH_0} f \right\|_\infty \leq C \, \left\|f \right\|_{1}, \, \forall \, f \in L^1({\cal R}) \cap L^2({\cal R}).
$$
Interpolating with the $L^2$ bound $\left\|e^{itH_0} f \right\|_{2} = \left\|f \right\|_{2},$ leads to
\be
\label{pq}
\ds \sup_{t \neq 0} \left|t \right|^{- \half + \frac{1}{p}} \left\|e^{itH_0} f \right\|_{p^\prime} \leq C \, \left\|f \right\|_p, \, \forall \, f \in L^1({\cal R}) \cap L^2({\cal R}),
\ee
where $ 1 \leq p \leq 2$. It is well-known that via $T^*T$ argument \rfb{pq} gives rise to the class of Strichartz estimates
\be
\label{stricf}
\left\|e^{itH_0} f \right\|_{L^{q}_t (L^{p}_x)} \leq C \, \left\| f\right\|_2, \; \forall \, \frac{2}{q} + \frac{1}{p} = \frac{1}{2}, \, 2 < q \leq + \infty, \, 2 \leq p \leq \infty.
\ee
The endpoint $q=2$ is not captured by this approach but by the approach develloped by Keel and Tao in \cite{keel}. So the estimate \rfb{stricf} is valid for all $2 \leq p,q \leq + \infty$ satisfying $\frac{2}{q} + \frac{1}{p} = \frac{1}{2}$
and we have also,
$$
\left\|\int_\rline e^{-itH_0} F(s,.) ds \right\|_{L^2({\cal R})} \leq C \, \left\|  F\right\|_{L^{q^{\prime}}
(\rline,L^{p^{\prime}}({\cal R}))},
$$

$$
\left\|\int_0^t e^{i(t-s)H_0} F(s) ds \right\|_{L^q(\rline, L^{r^\prime}({\cal R}))} \leq C \, \left\|  F\right\|_{L^{r^{\prime}}
(\rline,L^{s^{\prime}}({\cal R}))},
$$
for all admissible pairs $(q,p)$ and $(r,s)$ satisfying $\frac{2}{q} + \frac{1}{p} = \frac{1}{2}, \, 2 \leq q,p \leq + \infty$. \end{proof}

Corollary \ref{stric}   can be proved in the same way.


According to \rfb{stricf} and \cite{casenave}, we have for $p \in (0,4),$ that for any $u_0 \in L^2({\cal R})$ the equation

$$
i u_t - \Delta_{{\cal R}} u  \pm |u|^p u =0, \, t \neq 0, \;
u = u_0, \, t = 0,
$$
admits a unique solution $u \in C(\rline,L^2({\cal R})) \cap \, \ds \bigcap_{(q,r) \; \hbox{admissible}} L^q_{loc}(\rline,L^r({\cal R})).$

For similar results about nonlinear Schr\"odinger equation on graphs, we refer to \cite{admi2,nak,sab}.

\section{Expansion in generalized eigenfunctions} \label{sec4}
The goal of this section is to find an explicit expression for the
kernel of the resolvent of the operator $H$ on the star-shaped
network defined in section \ref{formulare}. First we separate the
branches by extending the potential of the Schr\"{o}dinger operator
by zero on $(-\infty,0)$. Using \cite{deift-trub}, we construct $N$
families of generalized eigenfunctions of the resulting $N$
Schr\"{o}dinger operators on $\RR$, which we recombine on the
network.  This approach can be compared with the ones developed for
Klein-Gordon equations in ${\cal R}$ by \cite{meh,FAMetall10}.

For each $j=1,\ldots, N$, we recall that $R_j$ is identified to
$(0,+\infty)$ and denote by $V_j$ the restriction of $V$ to $R_j$.
Consider $R_j$ as a subset of $\RR$ and denote by $\tilde V_j$ the
extension of $V_j$ by 0 outside $R_j$.

Now according to  \cite{deift-trub}  (see also \cite{weder,wederb}) for all
$z\in \CC^+:=\{z_1\in \CC: \Im z_1\geq 0\}$, there exist two
functions $f_{j,\pm} (z,\cdot)$ that satisfy the differential
equation
\be
\label{josteq} - f^{\prime \prime}_{j,\pm} (z,x) + \tilde V_j(x)
f_{j,\pm} (z,x)  = z^2 f_{j,\pm} (z,x) \hbox{ on } \RR,
\ee
and that have the asymptotic behaviour
\be \label{asymp.osc} |f_{j,\pm} (z,x)-e^{\pm izx}|\to 0 \hbox{ as }
x\to \pm \infty. \ee
According to section 1 of \cite{deift-trub} (see also \cite[p.
45]{weder}) we write
$$
f_{j,\pm} (z,x)=e^{\pm izx}  m_{j,\pm} (z,x),
$$
to remove the oscillations of $f_{j,\pm}$ at infinity.
The functions $m_{j,\pm}$ are the unique solutions of the Volterra
integral equations:
\beq
\label{mj+} m_{j,+}(z,x)&=&1+\int_x^{+\infty}
\frac{e^{2iz(y-x)}-1}{2iz} \tilde V_j(y) m_{j,+}(z,y) \, dy,
\\
m_{j,-}(z,x)&=&1+\int_{-\infty}^x \frac{e^{2iz(y-x)}-1}{2iz} \tilde V_j(y) m_{j,-}(z,y) \, dy,
\label{mj-}
\eeq
and are called Jost functions (see \cite{deift-trub,ReedSimonIII}).
Recall that   Lemma 1 of \cite{deift-trub} (see also (2.5) of
\cite{weder})   implies that \beq\label{estm+} |m_{j,+}(z,x)|\leq C,
\quad \forall x\in [0,\infty), z\in \CC^+,
\\
|m_{j,-}(z,x)|\leq 1+C \frac{1+x}{1+|z|}, \quad \forall x\in [0,\infty), z\in \CC^+,
\label{estm-}
\eeq
for some $C>0$. Accordingly as
$
f_{j,\pm} (z,x)=e^{\pm izx}  m_{j,\pm} (z,x),
$
we get
\beq\label{estf+}
|f_{j,+}(z,x)|\leq C, \quad \forall x\in [0,\infty), z\in \CC^+,
\\
|f_{j,-}(z,x)|\leq  C (1+x) e^{\Im z x}, \quad \forall x\in [0,\infty), z\in \CC^+.
\label{estf-}
\eeq

Property \rfb{asymp.osc} implies the existence of functions
$T_j, R_{j,1}, R_{j,2}, j=1,\ldots,N$, called transmission and reflection
coefficients,
such that
\begin{eqnarray*}
  f_{j,+}(x,r) &\sim& \frac{1}{T_j(r)}e^{irx} + \frac{R_{j,2}(r)}{T_{j}(r)}e^{-irx},
  \ x \rightarrow -\infty\\
 f_{j,-}(x,r) &\sim& \frac{1}{T_j(r)}e^{-irx} + \frac{R_{j,1}(r)}{T_{j}(r)}^{irx},
  \ x \rightarrow \infty\\
\end{eqnarray*}
for $r \in \RR $.
%
For future purposes, for all real numbers $r$, we need  the
scattering matrix $S_j(r)\in \CC^{2\times 2}$ associated with
(\ref{josteq})   given by
$$
S_j(r)=\left(\begin{array}{ll}
T_j(r)& R_{j,2}(r)\\
R_{j,1}(r)&T_j(r)
\end{array}
\right)
$$
and that is continuous on $\RR$.
According to \cite{deift-trub}, $T_j$ has a meromorphic extension to $\CC^+$ (with a finite numbers of simple poles that are non zero purely imaginary numbers) that is given by (see  \cite[p. 145]{deift-trub})
\be\label{expTj}
\frac{1}{T_j(z)}=1-\frac{1}{2iz}\int_{-\infty}^{+\infty}  \tilde V_j(y) m_{j,+}(z,y) \, dy\quad \forall z\in \CC^+.\ee
Since $\tilde V_j$ has its support in $(0,+\infty)$,  by remark 10 of \cite{deift-trub}
$R_{j,2}$ admits also a meromorphic extension on  $\CC^+\setminus\RR$ (with the same poles as the ones of $T_j$)
that is given by (compare  \cite[p. 145]{deift-trub} when $z$ is real)
\be\label{expR2j}
\frac{R_{j,2}(z)}{T_j(z)}=\frac{1}{2iz}\int_{-\infty}^{+\infty}  e^{2iz y}\tilde V_j(y) m_{j,+}(z,y) \, dy\quad \forall z\in \CC^+.\ee

Due to the fact that $\tilde V_j$ is zero on $(-\infty, 0)$, the
generalized eigenfunctions $f_{j,\pm}$ of the Schr\"{o}dinger
operators on the line have the following properties.

\begin{lemma}\label{lpropJosti}
For all $z \in \CC^+,$ $z \ne 0$, we have
\beq
\label{propJosti-}
f_{j,-} (z,x)&=&e^{-izx} \quad \forall x\leq 0,
\\
f_{j,+} (z,x)&=&\frac{1}{T_j(z)}e^{izx}+\frac{R_{j,2}(z)}{T_j(z)}e^{-izx} \quad \forall x\leq 0.
\label{propJosti+}
\eeq
In particular,
it holds
\beq\label{propJosti-0}
f_{j,-} (z,0)&=&1,
\\
f_{j,+} (z,0)&=&\frac{1+R_{j,2}(z)}{T_j(z)}.
\label{propJosti+0}
\eeq
\end{lemma}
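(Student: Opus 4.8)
The whole point is that $\tilde V_j$ vanishes on $(-\infty,0)$, so on the negative half-line the Jost equation \rfb{josteq} reduces to the free equation $-f'' = z^2 f$, whose solutions are linear combinations of $e^{izx}$ and $e^{-izx}$. The plan is to identify the correct combination for each of $f_{j,-}$ and $f_{j,+}$ by matching them against their known integral representations and the defining asymptotics, and then to read off the values at $x=0$.

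First I would treat $f_{j,-}$. From the Volterra equation \rfb{mj-}, for any $x\le 0$ the integral $\int_{-\infty}^x \frac{e^{2iz(y-x)}-1}{2iz}\tilde V_j(y) m_{j,-}(z,y)\,dy$ has integrand supported where $\tilde V_j(y)\ne 0$, i.e.\ where $y>0$; since the domain of integration is $(-\infty,x]\subset(-\infty,0]$, this integral vanishes. Hence $m_{j,-}(z,x)=1$ for all $x\le 0$, and by the relation $f_{j,-}(z,x)=e^{-izx}m_{j,-}(z,x)$ we get $f_{j,-}(z,x)=e^{-izx}$ for all $x\le 0$, which is \rfb{propJosti-}. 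Setting $x=0$ gives \rfb{propJosti-0} immediately.

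For $f_{j,+}$ the argument is slightly less direct, because the Volterra equation \rfb{mj+} for $m_{j,+}$ integrates over $[x,+\infty)$, which does meet the support of $\tilde V_j$ even when $x\le 0$, so $m_{j,+}$ is not simply constant there. Instead I would use that $f_{j,+}(z,\cdot)$ solves the free equation on $(-\infty,0]$, so it must be of the form $\alpha(z)e^{izx}+\beta(z)e^{-izx}$ for $x\le 0$; the task is to identify $\alpha,\beta$. The cleanest route is to invoke the scattering relations already recorded in the excerpt: as $x\to-\infty$, $f_{j,+}(z,x)\sim \frac{1}{T_j(z)}e^{izx}+\frac{R_{j,2}(z)}{T_j(z)}e^{-izx}$. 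But because the potential is supported in $(0,+\infty)$, the solution is \emph{exactly} free — not merely asymptotically — on all of $(-\infty,0]$, so this asymptotic form is in fact an exact identity there, giving $\alpha(z)=1/T_j(z)$ and $\beta(z)=R_{j,2}(z)/T_j(z)$. This yields \rfb{propJosti+}, and evaluating at $x=0$ produces $f_{j,+}(z,0)=\frac{1+R_{j,2}(z)}{T_j(z)}$, which is \rfb{propJosti+0}. To make the identification of $\alpha,\beta$ fully rigorous rather than asymptotic, one can alternatively compute them from the explicit integral formulas \rfb{expTj} and \rfb{expR2j}: for $x\le 0$ one writes $f_{j,+}(z,x)=e^{izx}m_{j,+}(z,x)$ and uses \rfb{mj+} at such $x$ to express $m_{j,+}(z,x)$ in terms of $\int_{-\infty}^{+\infty}\tilde V_j(y)m_{j,+}(z,y)\,dy$ and $\int_{-\infty}^{+\infty}e^{2izy}\tilde V_j(y)m_{j,+}(z,y)\,dy$, the two integrals appearing precisely in \rfb{expTj} and \rfb{expR2j}.

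The step I expect to require the most care is the $f_{j,+}$ case: one must justify that the asymptotic relation for $f_{j,+}$ holds as an exact equality on the whole negative half-line and not just in the limit $x\to-\infty$, and that the coefficients match the analytically continued $T_j,R_{j,2}$ defined by \rfb{expTj}--\rfb{expR2j} for $z\in\CC^+$ rather than only for real $r$. This is exactly where the support condition $\operatorname{supp}\tilde V_j\subset(0,+\infty)$ does the work, so I would make that hypothesis explicit at the decisive point of the argument. The $f_{j,-}$ case, by contrast, is a direct consequence of the Volterra equation and is essentially a one-line verification.
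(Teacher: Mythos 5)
Your proposal is correct, and its rigorous core coincides exactly with the paper's proof: the observation that the Volterra integral in \rfb{mj-} is taken over $(-\infty,x]\subset(-\infty,0]$ where $\tilde V_j$ vanishes (giving $m_{j,-}(z,x)=1$ for $x\le 0$), and, for $f_{j,+}$, the rewriting of \rfb{mj+} at $x\le 0$ so that the two integrals $\int_{-\infty}^{+\infty}\tilde V_j(y)m_{j,+}(z,y)\,dy$ and $\int_{-\infty}^{+\infty}e^{2izy}\tilde V_j(y)m_{j,+}(z,y)\,dy$ of \rfb{expTj} and \rfb{expR2j} appear, is precisely what the paper does.

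One correction on what you call the ``cleanest route'', though: for non-real $z\in \CC^+$ it cannot be made to work, and not merely because it needs more care. First, the scattering relations quoted in the paper are formulated only for real $r$; for $\Im z>0$ the quantities $1/T_j(z)$ and $R_{j,2}(z)/T_j(z)$ are \emph{defined} by the meromorphic continuations \rfb{expTj} and \rfb{expR2j}, so there is no asymptotic statement available against which to match your coefficients $\alpha(z),\beta(z)$. Second, and more fundamentally, even if such an asymptotic relation were granted for complex $z$, it could never determine $\beta(z)$: when $\Im z>0$ one has $e^{-izx}=o\left(e^{izx}\right)$ as $x\to-\infty$, so the coefficient of the decaying exponential is invisible in any relation of the form \rfb{asymp.osc}. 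The asymptotic matching argument is sound only on the real axis (where both exponentials are bounded and oscillatory); extending from $\RR$ to $\CC^+$ would then require a boundary-uniqueness argument for analytic functions, which is heavier than the direct computation. So the Volterra computation you relegate to a fallback is not an optional rigorization --- it is the proof, and it is the paper's proof. Your treatment of $f_{j,-}$ is identical to the paper's and complete as stated.
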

\begin{proof}
From the expression (\ref{mj-}), we directly get (\ref{propJosti-})
and  (\ref{propJosti-0}). The situation  is more complicated for
$f_{j,+} $. Indeed from the   expression (\ref{mj+}), we see that
$$
m_{j,+}(z,x)=1+\int_0^{+\infty} \frac{e^{2iz(y-x)}-1}{2iz}  \tilde V_j(y) m_{j,+}(z,y) \, dy, \forall x\leq 0.
$$
This is equivalent to
\beqs
m_{j,+}(z,x)&=&1-\frac{1}{2iz}\int_0^{+\infty}  \tilde V_j(y) m_{j,+}(z,y) \, dy+\frac{e^{-2izx}}{2iz}\int_0^{+\infty} e^{2iz y}  \tilde V_j(y) m_{j,+}(z,y) \, dy
\\
&=&1-\frac{1}{2iz}\int_{-\infty}^{+\infty}  \tilde V_j(y) m_{j,+}(z,y) \, dy+\frac{e^{-2izx}}{2iz}\int_{-\infty}^{+\infty} e^{2iz y}  \tilde V_j(y) m_{j,+}(z,y) \, dy, \forall x\leq 0.
\eeqs
Hence according to the expression of $\frac{1}{T_j(z)}$ and $\frac{ R_{j,2}(z)}{T_j(z)}$
given in (\ref{expTj}) and (\ref{expR2j}), we obtain  (\ref{propJosti+}). According to this identity we trivially have
$$
f_{j,+} (z,0)=\frac{1+R_{j,2}(z)}{T_j(z)}\ .
$$ \end{proof}

For our next considerations, we need that
$$
f_{j,+} (z,0)  \ne 0,
$$
at least for all $z\in \CC^+$ close to the real axis.

Therefore we make the following assumption:
\be
\label{assumptionserge} 1+\int_0^{+\infty} x V_j(x)
m_{j,+}(0,x)\,dx\ne 0, \forall j=1,\ldots, N,
\ee
that allows to obtain the next result.
\begin{lemma}\label{leigenfcts+nonnulle}
If the assumption (\ref{assumptionserge}) holds, then there exists
$\kappa>0$ small enough and two positive constants $C_1, C_2$ such
that \be\label{bornitude} C_1\leq |f_{j,+} (z,0)|\leq C_2 \quad
\forall z\in B_\kappa, \ee where
$B_\kappa=\{z_1\in \CC^+: 0\leq \Im z_1\leq \kappa\}$.
\end{lemma}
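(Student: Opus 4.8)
The plan is to reduce the two-sided bound to a single integral representation of $f_{j,+}(z,0)$ and then to combine an elementary estimate of the kernel $\tfrac{e^{2izy}-1}{2iz}$ with a continuity-and-compactness argument. First I would record that, since $f_{j,+}(z,x)=e^{izx}m_{j,+}(z,x)$, one has $f_{j,+}(z,0)=m_{j,+}(z,0)$, and evaluating the Volterra equation (\ref{mj+}) at $x=0$ (where $\tilde V_j=V_j$, since $\tilde V_j$ vanishes on $(-\infty,0)$) gives
\[
f_{j,+}(z,0)=1+\int_0^{+\infty}\frac{e^{2izy}-1}{2iz}\,V_j(y)\,m_{j,+}(z,y)\,dy ,\qquad z\in\CC^+ ;
\]
the same formula follows by inserting (\ref{expTj}) and (\ref{expR2j}) into (\ref{propJosti+0}). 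Since $\tfrac{e^{2izy}-1}{2iz}\to y$ as $z\to0$, the apparent singularity is removable, and the hypothesis (\ref{assumptionserge}) is precisely the statement $f_{j,+}(0,0)\neq0$.

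For the upper bound I would use that, for $\Im z\ge0$ and $y\ge0$,
\[
\Big|\frac{e^{2izy}-1}{2iz}\Big|=\Big|\int_0^{y}e^{2izt}\,dt\Big|\le y ,
\]
together with the uniform bound $|m_{j,+}(z,y)|\le C$ from (\ref{estm+}). This yields $|f_{j,+}(z,0)|\le 1+C\int_0^{+\infty} y\,|V_j(y)|\,dy=:C_2$ for every $z\in\CC^+$, the integral being finite because $V_j\in L^1_1(R_j)$ (indeed $\gamma>5/2>1$). Note this already holds on the whole half-plane, not merely on $B_\kappa$.

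The lower bound is the real point, and it must hold on the \emph{unbounded} strip $B_\kappa$, so I would split it. For large $|z|$, using in addition $\big|\tfrac{e^{2izy}-1}{2iz}\big|\le|z|^{-1}$, I get $|f_{j,+}(z,0)-1|\le C\int_0^{+\infty}\min\{y,|z|^{-1}\}\,|V_j(y)|\,dy\to0$ as $|z|\to\infty$ by dominated convergence; hence $|f_{j,+}(z,0)|\ge\tfrac12$ for $|z|\ge M$, some $M>0$. On the compact set $\{z\in\CC^+:0\le\Im z\le\kappa,\ |z|\le M\}$ the map $z\mapsto f_{j,+}(z,0)$ is continuous (continuity of $m_{j,+}(\cdot,y)$ up to the real axis from \cite{deift-trub}, plus the domination above), so it suffices to show it does not vanish there and to invoke compactness. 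I would locate its zeros in $\overline{\CC^+}$: at $z=0$ there are none, by (\ref{assumptionserge}); for real $k\neq0$ there are none, since (\ref{expTj}) shows $1/T_j(k)$ is finite, hence $T_j(k)\neq0$, and then $f_{j,+}(k,0)=(1+R_{j,2}(k))/T_j(k)$ could vanish only if $R_{j,2}(k)=-1$, contradicting the unitarity relation $|T_j(k)|^2+|R_{j,2}(k)|^2=1$ together with $T_j(k)\neq0$; and in the open half-plane a zero of $f_{j,+}(\cdot,0)=m_{j,+}(\cdot,0)$ at $z$ with $\Im z>0$ makes $f_{j,+}(z,\cdot)$ an $L^2(0,+\infty)$ Dirichlet eigenfunction, forcing $z=i\mu$ purely imaginary with $-\mu^2$ a negative eigenvalue of the half-line operator. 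Choosing $\kappa$ smaller than the smallest such $\mu$ over $j=1,\dots,N$ (finitely many, all positive) removes these from the strip, so $f_{j,+}(\cdot,0)$ is bounded below by some $C_1'>0$ on the compact set; then $C_1:=\min\{C_1',\tfrac12\}$ gives (\ref{bornitude}).

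The main obstacle is exactly this lower bound on the full strip, and within it the non-vanishing on the real axis away from $0$: this is the standard absence of positive-energy resonances, and it relies on the unitarity of the scattering matrix $S_j$ (equivalently, on the Wronskian identity $W(f_{j,+},\overline{f_{j,+}})=-2ik\neq0$, which shows that $f_{j,+}(k,\cdot)$ cannot be proportional to the real Dirichlet solution). The remaining inputs---the kernel estimate, the decay $f_{j,+}(z,0)\to1$ at infinity, and the choice of $\kappa$ below the finitely many bound states---are routine once the integral representation is in place.
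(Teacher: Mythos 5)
Your argument is correct, and its skeleton coincides with the paper's: the key identity $f_{j,+}(z,0)=(1+R_{j,2}(z))/T_j(z)$ (your Volterra computation at $x=0$ reproduces exactly what Lemma \ref{lpropJosti} together with (\ref{expTj})--(\ref{expR2j}) gives), the limit $f_{j,+}(z,0)\to 1$ as $|z|\to\infty$, non-vanishing on the real axis (assumption (\ref{assumptionserge}) at $z=0$, unitarity of $S_j$ for real $z\neq 0$), and a continuity/compactness step to produce uniform constants. The one step where you genuinely diverge is the passage from the real axis into the strip. The paper never discusses complex zeros at all: it proves the lower bound on the compact real segment $[-R_0,R_0]$ and then simply shrinks $\kappa$ so that, by continuity of $f_{j,+}(\cdot,0)$, the bound persists on $B_\kappa\cap\{z_1\in\CC:\ \Re z_1\in[-R_0,R_0]\}$. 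You instead rule out zeros in the open upper half-plane directly, by observing that a zero of $m_{j,+}(\cdot,0)$ at $z$ with $\Im z>0$ would make $f_{j,+}(z,\cdot)$ an $L^2(0,\infty)$ Dirichlet eigenfunction of the half-line operator, forcing $z=i\mu$ with $-\mu^2$ one of the finitely many negative eigenvalues, and you then choose $\kappa$ below the smallest such $\mu$. Your variant is a bit longer but buys more: it locates all possible zeros of $f_{j,+}(\cdot,0)$ in $\CC^+$ and gives an explicit admissible size for $\kappa$ (anything below the lowest bound state), whereas the paper's uniform-continuity argument yields no quantitative control on $\kappa$. Two further minor differences: your large-$|z|$ step is self-contained, using $\bigl|(e^{2izy}-1)/(2iz)\bigr|\leq\min\{y,|z|^{-1}\}$, (\ref{estm+}) and dominated convergence, where the paper cites property IV of Theorem 1 of \cite{deift-trub}; and your upper bound $C_2=1+C\|V_j\|_{L^1_1}$ is valid on all of $\CC^+$, not merely on $B_\kappa$. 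Both proofs are sound.
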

\begin{proof}
Recall that
$$
f_{j,+} (z,0)=\frac{1+R_{j,2}(z)}{T_j(z)}.
$$
By (\ref{expTj}) and (\ref{expR2j}) we see that (see property IV of
Theorem 1 in \cite{deift-trub}, p.~147) there exist $R, C>0$ such
that \be\label{bornitude1} |T_j(z)-1|+|R_{j,2}(z)|\leq
\frac{C}{|z|}, \forall |z|>R. \ee Hence (\ref{bornitude}) holds for
all $|z|>R_0$, with $R_0$ large enough.

Now for $|z|$ small, we remark that $\frac{1+R_{j,2}(z)}{T_j(z)}$ is different from zero for all
$z\in \RR\setminus\{0\}$
by using the properties II and V of
Theorem 1 in \cite[p. 146]{deift-trub}.
Furthermore using (\ref{expTj}) and (\ref{expR2j}), one easily checks that
\be\label{serge10}
\lim_{z\to 0}\frac{1+R_{j,2}(z)}{T_j(z)}=1+\int_0^{+\infty} t V_j(t) m_{j,+}(0,t)\,dt.
\ee
Consequently our assumption garantess that the continuous function
$f_{j,+} (\cdot,0)$ is different fom zero on the whole compact $[-R_0, R_0]$
and therefore
 (\ref{bornitude}) holds for all real numbers $z\in [-R_0, R_0]$.
By the continuity of $f_{j,+} (\cdot,0)$ on $B_{\delta'}$ for $\delta'$ small enough,
we deduce that  (\ref{bornitude}) holds for all $z\in B_{\kappa}\cap \{z_1\in \CC: \Re z_1\in[-R_0, R_0]\}$,
by choosing $\kappa$ small enough.
\end{proof}

The assumption (\ref{assumptionserge}) is technical but it is satified
by a large choice of potentials.
Let us list some specific examples.

\begin{lemma}\label{lassumptionserge}
1. In the generic case, namely
if
$$
\int_0^{+\infty} V_j(x) m_{j,+}(0,x)\,dx\ne 0,
$$
then we have
\be
\label{assumptionsergepart} 1+\int_0^{+\infty} x V_j(x)
m_{j,+}(0,x)\,dx\ne 0,
\ee
if $V_j$ is non negative or if
$$
\int_0^{+\infty} x |V_j(x)|\,dx\leq \rho
$$
where $\rho$ is the unique positive number such that $\rho e^\rho=1$.
\\
2. In the exceptional case,
namely
if
$$
\int_0^{+\infty} V_j(x) m_{j,+}(0,x)\,dx=0,
$$
then (\ref{assumptionsergepart}) always holds.
\end{lemma}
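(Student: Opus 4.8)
The plan is to first observe that the left-hand side of (\ref{assumptionsergepart}) is exactly the value $m_{j,+}(0,0)$ of the zero-energy Jost function at the node. Indeed, letting $z\to 0$ in the Volterra equation (\ref{mj+}) (the kernel $\frac{e^{2iz(y-x)}-1}{2iz}$ tends to $y-x$) and using that $\tilde V_j$ is supported in $(0,+\infty)$, one gets, with $m(x):=m_{j,+}(0,x)$,
\[
m(x)=1+\int_x^{+\infty}(y-x)V_j(y)m(y)\,dy,\qquad x\ge 0,
\]
so that evaluation at $x=0$ yields $m(0)=1+\int_0^{+\infty}xV_j(x)m(x)\,dx$. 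Differentiating twice shows $m''=V_jm$ on $(0,+\infty)$ with $m(+\infty)=1$, $m'(+\infty)=0$, and $m'(0)=-\int_0^{+\infty}V_j(x)m(x)\,dx$. Hence the generic case ($\int_0^\infty V_j m\neq0$) is precisely $m'(0)\neq0$ and the exceptional case is $m'(0)=0$, while in every case the claim (\ref{assumptionsergepart}) is equivalent to $m(0)\neq 0$. This reduction is the organising idea, and the remaining task is to verify $m(0)\neq0$ under each hypothesis.

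For the exceptional case I would argue by uniqueness for the Cauchy problem of $m''=V_jm$, valid since $V_j\in L^1_1({\cal R})$ (via the equivalent local form $m(x)=\int_0^x(x-t)V_j(t)m(t)\,dt$ and a Gronwall estimate). If $m(0)=0$, then together with $m'(0)=0$ this forces $m\equiv 0$ on $[0,+\infty)$, contradicting $m(+\infty)=1$; hence $m(0)\neq0$, with no further restriction on $V_j$, which is exactly assertion~2.

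For the generic case with $V_j\ge 0$, the Volterra iteration $p_0\equiv1$, $p_{n+1}(x)=\int_x^{+\infty}(y-x)V_j(y)p_n(y)\,dy$ has a nonnegative kernel on $\{y\ge x\ge0\}$, so all $p_n\ge0$ and $m=\sum_n p_n\ge p_0=1>0$; in particular $m(0)\ge1$. For the small-norm case I would use the classical Jost bound $|m(x)|\le e^{\sigma(x)}$, with $\sigma(x):=\int_x^{+\infty}(y-x)|V_j(y)|\,dy$ (Lemma~1 of \cite{deift-trub}, the quantitative form of (\ref{estm+})), together with one integration by parts: writing $\tau(x)=\int_x^\infty|V_j|$ so that $\sigma'=-\tau$ and $\frac{d}{dx}e^{\sigma}=-\tau e^{\sigma}$, one finds
\[
\left|\int_0^{+\infty}xV_jm\,dx\right|\le\int_0^{+\infty}x|V_j|e^{\sigma}\,dx\le\int_0^{+\infty}\tau e^{\sigma}\,dx=e^{\sigma(0)}-1\le e^{\rho}-1,
\]
where $\sigma(0)=\int_0^\infty x|V_j|\le\rho$ and the middle inequality drops the nonpositive term $-\int_0^\infty x\tau^2e^{\sigma}$ produced by the integration by parts. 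Since $\rho e^\rho=1$ gives $e^\rho-1=(1-\rho)/\rho$ and $\rho>\frac12$ (as $\frac12e^{1/2}<1$), this bound is $<1$, so $|m(0)-1|<1$ and in particular $m(0)\neq0$.

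The main obstacle is precisely this last estimate: the naive bound $\left|\int_0^\infty xV_jm\right|\le(\sup|m|)\int_0^\infty x|V_j|\le e^{\rho}\cdot\rho=1$ is not strict, and from it one cannot conclude $m(0)\neq0$. The improvement comes from the integration by parts, which replaces the crude factor $e^{\sigma(0)}$ by the averaged quantity $e^{\sigma(0)}-1$, exploiting that $\sigma$ decreases from $\sigma(0)$ to $0$ so that $|m|$ is close to $1$ where the weight $x|V_j|$ carries most of its mass. This is exactly where the threshold $\rho e^\rho=1$, equivalently $e^\rho-1<1$, enters.
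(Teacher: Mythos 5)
Your proof is correct, and in two of the three sub-cases it takes a genuinely different route from the paper's. The reduction itself is shared: the paper also rests on the identity $1+\int_0^{+\infty} x V_j(x) m_{j,+}(0,x)\,dx = m_{j,+}(0,0)=\lim_{z\to 0}f_{j,+}(z,0)$, but obtains it through the scattering quantities, i.e. via $f_{j,+}(z,0)=(1+R_{j,2}(z))/T_j(z)$ and (\ref{serge10}), rather than by passing to $z=0$ in the Volterra equation as you do. Where you diverge: for the exceptional case the paper invokes Theorem 1 of \cite{deift-trub} to get $|R_{j,2}(r)|\leq C<1$ and concludes that $\lim_{r\to 0}|(1+R_{j,2}(r))/T_j(r)|\geq 1-C>0$, whereas your argument via uniqueness of the Cauchy problem for $m''=V_j m$ (zero Cauchy data at $0$ would force $m\equiv 0$, contradicting $m(+\infty)=1$) is elementary and avoids the reflection coefficient entirely. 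For the non-negative case the two arguments coincide in substance: the paper asserts that $m_{j,+}(0,\cdot)\geq 0$, and your positive Volterra iteration is exactly the justification of that assertion. For the small-norm case the paper quotes from Lemma 1 of \cite{deift-trub} the bound $|m_{j,+}(0,0)|\geq 1-\gamma_j e^{\gamma_j}$ with $\gamma_j=\int_0^{+\infty}x|V_j(x)|\,dx$, which is conclusive only when $\gamma_j<\rho$ strictly; your estimate $|m_{j,+}(0,0)-1|\leq e^{\gamma_j}-1\leq e^{\rho}-1<1$ is sharper (since $e^{\gamma}-1\leq \gamma e^{\gamma}$) and, unlike the paper's, also closes the endpoint $\gamma_j=\rho$ that the statement of the lemma allows, where $1-\gamma_j e^{\gamma_j}$ degenerates to $0$. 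Incidentally, your integration by parts is not needed to reach that bound: summing the Volterra iterates gives $|m(0)-1|\leq \sum_{n\geq 1}\sigma(0)^n/n!=e^{\sigma(0)}-1$ directly. In short, the paper buys brevity by citing \cite{deift-trub}; your version buys self-containedness, an elementary exceptional-case argument, and a strictly stronger small-norm estimate.
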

\begin{proof}
In the exceptional case, by Theorem 1 of \cite{deift-trub}, there exists a  constant $C\in (0,1)$
such that
$$
|R_{j,2}(r)|\leq C, \forall r\in \RR.
$$
Hence
$$
\lim_{r\to 0\atop r\in \RR}\left|\frac{1+R_{j,2}(r)}{T_j(r)}\right|\geq  1-C,
$$
which implies that (\ref{assumptionsergepart})  holds.

In the generic case and if $V_j$ is non negative, then $m_{j,+}(0,\cdot)$ is a non negative
  function and therefore (\ref{assumptionsergepart}) directly  holds.

In the generic case and if $V_j$ has no sign, then the considerations of Lemma  1 of \cite[p. 133]{deift-trub}
shows that
$$
|m_{j,+}(0,0)|\geq 1-\gamma_je^{\gamma_j},
$$
where $\gamma_j=\int_0^{+\infty} t |V_j(t)|\,dt$. Hence if $1-\gamma_je^{\gamma_j}>0$,
we deduce that
$m_{j,+}(0,0)$ is different from zero.
This yields the conclusion since
$$
m_{j,+}(0,0)=f_{j,+}(0,0)=\lim_{z\to 0}f_{j,+} (z,0).
$$
\end{proof}

Note that   $V_j=0$ is an  exceptional case.

We now prove that $R_{j,2}(z)$ is continuous and uniformly bounded in $B_\kappa$
if $\kappa>0$ small enough (suggested by Remark 10 of \cite{deift-trub}).
\begin{lemma}\label{lrj2borne}
For all $j=1,\ldots, N$,
there exists a positive constant $C_j$ such that
\be\label{serge30}
|R_{j,2}(z)|\leq C_j, \quad  \forall z\in B_\kappa,
\ee
for $\kappa>0$ small enough.
\end{lemma}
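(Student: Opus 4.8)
The plan is to reduce the bound on $R_{j,2}$ to the already-established boundedness of $f_{j,+}(\cdot,0)$ together with a bound on the transmission coefficient $T_j$. Solving the identity (\ref{propJosti+0}) for $R_{j,2}$ gives $R_{j,2}(z) = T_j(z)\,f_{j,+}(z,0) - 1$ for every $z \in B_\kappa$, $z \neq 0$. By Lemma \ref{leigenfcts+nonnulle} we already know $|f_{j,+}(z,0)| \leq C_2$ on $B_\kappa$, so it suffices to show that $|T_j(z)|$ is uniformly bounded on $B_\kappa$; then $|R_{j,2}(z)| \leq C_2 \sup_{B_\kappa}|T_j| + 1 =: C_j$, and the limiting value at $z=0$ is controlled by continuity of both factors.

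To bound $T_j$, I first shrink $\kappa$. Since $T_j$ extends meromorphically to $\CC^+$ with only finitely many simple poles, all lying on the imaginary axis away from the origin, I choose $\kappa$ smaller than the least imaginary part of any pole of $T_1,\dots,T_N$ (and no larger than the $\kappa$ furnished by Lemma \ref{leigenfcts+nonnulle}); then $B_\kappa$ contains no pole, so $T_j$ is holomorphic in the interior of $B_\kappa$ and $1/T_j$ does not vanish there. For $|z| > R$, estimate (\ref{bornitude1}) gives $|T_j(z)| \leq 1 + C/R$ directly (indeed it already bounds $R_{j,2}$ at infinity). It therefore remains to bound $T_j$ on the compact set $K := \{z \in B_\kappa : |z| \leq R\}$.

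On $K$ the natural object is $1/T_j$, for which we have the explicit formula (\ref{expTj}). For $z$ bounded away from $0$ the integrand is dominated using $|m_{j,+}(z,y)| \leq C$ (see (\ref{estm+})) and $V_j \in L^1$, so $1/T_j$ is continuous on $K \setminus \{0\}$; on the real axis the unitarity bound $|T_j(r)| \leq 1$ gives $|1/T_j(r)| \geq 1$. The delicate point — and the main obstacle — is the behaviour at $z=0$, which differs between the two cases of Lemma \ref{lassumptionserge}: writing $1/T_j(z) = 1 - \frac{1}{2iz}\int_0^{+\infty} V_j(y)\,m_{j,+}(z,y)\,dy$, in the generic case the integral is non-zero at $z=0$, whence $|1/T_j(z)| \to \infty$ and $T_j(z)\to 0$, whereas in the exceptional case the integral vanishes at $z=0$ and one must use the differentiability of $z \mapsto m_{j,+}(z,\cdot)$ near the origin (available since $\gamma > 5/2$) to see that $1/T_j$ extends continuously to a non-zero value at $0$. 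In either case $|1/T_j|$ is bounded below by a positive constant on $K$, so $|T_j|$ is bounded on $K$; combining with the estimate for $|z|>R$ yields the desired uniform bound $|R_{j,2}(z)| \leq C_j$ on $B_\kappa$.
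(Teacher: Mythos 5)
Your proof is correct, but it follows a genuinely different organization from the paper's. The paper's pivot is the explicit formula (\ref{expR2j}) for the quotient $R_{j,2}/T_j$: quoting the bound $|T_j(z)|\leq C_1$ on $B_\kappa$ from Theorem 1 of \cite{deift-trub}, it bounds that quotient by $C/|z|$ away from the origin, and near $z=0$ it cancels the $1/z$ singularity using exactly the dichotomy you invoke --- in the generic case $T_j(z)=\alpha_j z+o(z)$ (part V of Theorem 1 of \cite{deift-trub}), in the exceptional case the vanishing of $\int_0^{+\infty} V_j(y)\, m_{j,+}(0,y)\,dy$ together with the mean value theorem and the bound (\ref{estmdot}) on $\dot m_{j,+}$. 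You instead pivot on the algebraic identity $R_{j,2}(z)=T_j(z)\,f_{j,+}(z,0)-1$ coming from (\ref{propJosti+0}) (the same identity the paper exploits later, in Corollary \ref{corouncoefdansH1}), which reduces everything to a uniform bound on $T_j$; you then prove that bound from scratch rather than citing it: poles excluded by shrinking $\kappa$, the decay (\ref{bornitude1}) at infinity, and a positive lower bound for $|1/T_j|$ on the remaining compact set via continuity of (\ref{expTj}), the real-axis bound $|1/T_j(r)|\geq 1$, and the same generic/exceptional analysis at $z=0$. The two routes share their technical core (the behaviour of (\ref{expTj}) as $z\to 0$), but yours isolates the boundedness of $T_j$ as the single key claim: if one is willing, as the paper is, simply to quote $|T_j|\leq C_1$ on $B_\kappa$ from Deift--Trubowitz, your identity finishes the lemma in one line, whereas the paper still needs its case analysis; conversely, your compactness argument is precisely where that quoted fact gets re-proved, so the total work is comparable. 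Two small improvements: for the upper bound on $|f_{j,+}(z,0)|=|m_{j,+}(z,0)|$ cite (\ref{estm+}) directly rather than Lemma \ref{leigenfcts+nonnulle}, so as not to import assumption (\ref{assumptionserge}) into a lemma that is stated, and proved in the paper, without it; and in the exceptional case, state explicitly that the non-vanishing of the continuous extension of $1/T_j$ at $z=0$ follows by combining that continuity with $|1/T_j(r)|\geq 1$ on $\RR\setminus\{0\}$ --- you have both ingredients, but the conclusion is left implicit.
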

\begin{proof}
By Theorem 1 of \cite{deift-trub},
there exists $C_1>0$ such that
$$
|T_j(z)|\leq C_1, \forall z\in B_\kappa,
$$
for $\kappa>0$ small enough.
Hence by (\ref{expR2j}) we deduce that (\ref{serge30}) holds
for all $|z|>\epsilon$, for any $\epsilon>0$.

For $z$ in the ball  $|z|\leq \epsilon$, we distinguish the generic case from the exceptional one.
In the generic case, by part V of Theorem 1 of \cite[p. 150]{deift-trub},
we know that
$$
T_j(z)=\alpha_j z+o(z), \hbox{ for } z \rightarrow 0
$$
with $\alpha_j\ne 0$
and again using (\ref{expR2j}) we deduce that (\ref{serge30}) for $|z|\leq \epsilon$.

In the exceptional case, by (\ref{expR2j}) we may write

$$
R_{j,2}(z)=\frac{T_j(z)}{2iz}
\left(\int_{0}^{+\infty}  (e^{2iz y}-1) V_j(y) m_{j,+}(z,y) \, dy
+\int_{0}^{+\infty}   V_j(y) (m_{j,+}(z,y)-m_{j,+}(0,y)) \, dy\right),
$$
because $\int_0^{+\infty} V_j(t) m_{j,+}(0,t)\,dt=0$.
Therefore we obtain that
$$
|R_{j,2}(z)|\leq C_1
\left(\left|\int_{0}^{+\infty}  \frac{e^{2iz y}-1}{2iz} V_j(y) m_{j,+}(z,y) \, dy\right|
+\left|\int_{0}^{+\infty}   V_j(y) \frac{m_{j,+}(z,y)-m_{j,+}(0,y)}{2iz} \, dy\right|\right).
$$
For the first term of this right hand side, due to (\ref{estm+})
we can directly apply the dominated convergence theorem
to conclude that
$$
\int_{0}^{+\infty}  \frac{e^{2iz y}-1}{2iz} V_j(y) m_{j,+}(z,y) \, dy\to \int_{0}^{+\infty}  yV_j(y) m_{j,+}(0,y) \, dy\quad \hbox{ as } z\to 0.
$$
Since this limit is finite, we deduce that
$$
\left|\int_{0}^{+\infty}  \frac{e^{2iz y}-1}{2iz} V_j(y) m_{j,+}(z,y) \, dy\right|\leq C,
$$
for $|z|$ small enough.

For the second term, we use the same argument.
Namely
since $\tilde V_j$ belongs to $L^1_2(\RR)$, by Remark 3 of \cite{deift-trub},
the derivative $\dot{m}_{k,+}$ of $m_{k,+}$ with respect to $k$ exists  and is continuous on $\CC^+$.
Moreover by Lemma 2.1 of \cite[p. 46]{weder}, there exists $C_2>0$ such that
\be\label{estmdot}|\dot{m}_{k,+}(z,y)|\leq C_2, \forall x\geq 0.
\ee

Consequently by using the mean value theorem we have
$$
\frac{m_{j,+}(z,y)-m_{j,+}(0,y)}{2iz}=\frac{\dot{m}_{k,+}(\theta z,y)}{2i},
$$
for some $\theta \in (0,1)$
and therefore
$$
\left|\frac{m_{j,+}(z,y)-m_{j,+}(0,y)}{2iz}\right|\leq \frac{C_2}{2}, \forall x\geq 0.
$$
The application of dominated convergence theorem
yields
$$
  \int_{0}^{+\infty}   V_j(y) \frac{m_{j,+}(z,y)-m_{j,+}(0,y)}{2iz} \, dy\to \int_{0}^{+\infty}  V_j(y) \dot{m}_{j,+}(0,y) \, dy \quad \hbox{ as } z\to 0.
$$
The conclusion follows since this right-hand side is finite.
\end{proof}

We are now ready to give the different families of generalized eigenfunctions of $H$.

\begin{lemma}\label{lgeneigenfcts}
Under the assumption (\ref{assumptionserge}), then
for all $z\in B_\kappa$, $z\ne 0$  and all $j \in \{ 1, \cdots, N\}$, there exist two generalized eigenfunctions $F_{z^2}^{\pm,j}: {\cal R} \rightarrow \CC$ of $H$ defined by
$$
F_{z^2}^{\pm,j}(x):=F_{z^2,k}^{\pm,j}(x)\quad\forall x \in \overline{R_k},
$$
where $F_{z^2,k}^{\pm,j}$ is in the form
\be\label{geneigenfcts} \left\{ \begin{array}{ccc}
       F_{z^2,j}^{\pm,j}(x) &=& c_{j,\pm,1}(z) f_{j,\pm}(z,x)+c_{j,\pm,2}(z) f_{j,\mp}(z,x),   \\
      F_{z^2,k}^{\pm,j}(x) &=& d_{j,k,\pm}(z) f_{k,\mp}(z,x), \forall k \neq j,
   \end{array} \right.
\ee
and $c_{j,\pm,1}(z)$, $c_{j,\pm,2}(z)$and $d_{j,k,\pm}(z)$ are given by (modulo $N$)
\beqs
c_{j,\pm,1}(z)&=&\frac{ f_{j+1,\mp}(z,0)}{W_{j,\pm}(z)}
\left( f'_{j,\mp}(z,0)+f_{j,\mp}(z,0) \sum_{k\ne j}
 \frac{f'_{k,\mp}(z,0)}{f_{k,\mp}(z,0)}\right),\\
c_{j,\pm,2}(z)&=&-\frac{f_{j+1,\mp}(z,0)}{W_{j,\pm}(z)}
\left(   f'_{j,\pm}(z,0)+f_{j,\pm}(z,0)\sum_{k\ne j}
\ \frac{f'_{k,\mp}(z,0)}{f_{k,\mp}(z,0)}\right),\\
d_{j,k,\pm}(z)&=&\frac{f_{j+1,\mp}(z,0)}{f_{k,\mp}(z,0)}, \forall k\ne j,
\eeqs
$W_{j,\pm}(z)$ is the Wronskian  relatively to $f_{j,\pm}$, namely
$$
W_{j,\pm}(z)=f_{j,\pm}(z,x)f'_{j,\mp}(z,x)-f_{j,\mp}(z,x)f'_{j,\pm}(z,x),
$$
that is constant in $x$ and different from 0 (since $z\ne 0$).
\end{lemma}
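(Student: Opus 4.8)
The statement is constructive: the candidate functions are written out explicitly in (\ref{geneigenfcts}), so the plan is simply to verify that this ansatz defines bona fide generalized eigenfunctions of $H$, namely that each branch component solves the eigenvalue equation and that the transmission conditions (\ref{t0})--(\ref{t1}) hold at the node. The first point I would dispatch immediately: since $\tilde V_k = V_k$ on $(0,+\infty)$, each Jost function $f_{k,\pm}(z,\cdot)$ restricted to $\overline{R_k}$ solves $-u''+V_k u = z^2 u$ by (\ref{josteq}), hence so does every linear combination, so each $F_{z^2,k}^{\pm,j}$ satisfies the branch equation; moreover, being a solution of a second order ODE with locally integrable potential, it lies in $C^1(\overline{R_k})$, so the traces appearing in (\ref{t0})--(\ref{t1}) are meaningful. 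Thus the whole content reduces to choosing $c_{j,\pm,1},c_{j,\pm,2},d_{j,k,\pm}$ so that the node conditions are satisfied, and to checking that the closed forms in the statement are the ones that result.

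For the coefficients I would proceed as follows. Imposing continuity (\ref{t0}) and writing $\lambda$ for the common nodal value, the condition on a branch $k\ne j$ reads $d_{j,k,\pm}f_{k,\mp}(z,0)=\lambda$, which forces $d_{j,k,\pm}=\lambda/f_{k,\mp}(z,0)$; normalizing by the choice $\lambda=f_{j+1,\mp}(z,0)$ (indices mod $N$) reproduces the stated $d_{j,k,\pm}$. It then remains to fix $c_{j,\pm,1},c_{j,\pm,2}$ from the two conditions living on branch $j$: continuity gives $c_{j,\pm,1}f_{j,\pm}(z,0)+c_{j,\pm,2}f_{j,\mp}(z,0)=\lambda$, while the Kirchhoff condition (\ref{t1}), after inserting the $d_{j,k,\pm}$, gives $c_{j,\pm,1}f'_{j,\pm}(z,0)+c_{j,\pm,2}f'_{j,\mp}(z,0)=-\lambda\sum_{k\ne j}f'_{k,\mp}(z,0)/f_{k,\mp}(z,0)$. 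This is a $2\times2$ linear system whose coefficient determinant is exactly the Wronskian $W_{j,\pm}(z)$, and solving it by Cramer's rule will return verbatim the formulas for $c_{j,\pm,1}$ and $c_{j,\pm,2}$ stated in the lemma. That computation is the entire mechanical core of the proof.

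Two well-definedness points then have to be secured. First, $W_{j,\pm}(z)$ is constant in $x$ because (\ref{josteq}) has no first-order term (Abel's identity), and it is nonzero for $z\ne 0$ since $f_{j,+}$ and $f_{j,-}$, having the distinct asymptotics $e^{\pm izx}$, are linearly independent for $z\ne 0$ (indeed $W_{j,\pm}(z)$ is a nonzero multiple of $z/T_j(z)$, cf.\ \cite{deift-trub}). Second, the nodal denominators $f_{k,\mp}(z,0)$ must not vanish: for the family $F^{+,j}$ one has $f_{k,-}(z,0)=1$ by (\ref{propJosti-0}), while for $F^{-,j}$ the relevant denominators are the $f_{k,+}(z,0)$, bounded below on $B_\kappa$ by Lemma \ref{leigenfcts+nonnulle} under the assumption (\ref{assumptionserge}). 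The same bound makes $\lambda=f_{j+1,\mp}(z,0)\ne 0$, so the resulting $F_{z^2}^{\pm,j}$ is not identically zero and is a genuine generalized eigenfunction.

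I expect the one genuinely non-mechanical step---indeed the only place where the hypotheses actually enter---to be the nonvanishing of the denominators $f_{k,+}(z,0)$ for the $-$ family; everything else is linear algebra together with Abel's identity. This is precisely why Lemma \ref{leigenfcts+nonnulle}, and through it the assumption (\ref{assumptionserge}), was set up beforehand, and I expect it to be the crux that legitimizes the explicit coefficients uniformly on the strip $B_\kappa$.
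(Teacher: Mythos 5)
Your proposal is correct and follows essentially the same route as the paper's proof: impose continuity and the Kirchhoff condition on the ansatz (\ref{geneigenfcts}), eliminate the $d_{j,k,\pm}$ via the nonvanishing nodal values $f_{k,\mp}(z,0)$, and solve the resulting $2\times 2$ system whose determinant is the Wronskian $W_{j,\pm}(z)$. Your identification of where the hypotheses enter is in fact slightly sharper than the paper's citation: the nonvanishing of the denominators $f_{k,+}(z,0)$ for the $F^{-,j}$ family indeed rests on Lemma \ref{leigenfcts+nonnulle} (hence on (\ref{assumptionserge})), not merely on Lemma \ref{lpropJosti}.
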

\begin{proof}
We look for generalized eigenfunctions in the form (\ref{geneigenfcts}), the constants
$c_{j,\pm,1}(z)$, $c_{j,\pm,2}(z)$ and $d_{j,k,\pm}(z)$ will be fixed below in order to guarantee the continuity of $F_{z^2}^{\pm,j}$
at $0$
and the Kirchoff law. This will show that $F_{z^2}^{\pm,j}$ are generalized eigenfunctions of $H$
since   $F_{z^2,k}^{\pm,j}$ satisfies
$$
- \frac{d^2}{dx^2} F_{z^2,k}^{\pm,j} (x) + \tilde V_j(x) F_{z^2,k}^{\pm,j} (z,x)  = z^2 F_{z^2,k}^{\pm,j}\hbox{ on } R_k.
$$

Since each branch $j$ plays the same rule, we can take $j=1$
and write $c_{1,\pm,1}(z)=c_1$, $c_{1,\pm,2}(z)=c_2$ and $d_{1,k,\pm}(z)=d_{k}$.
The continuity at 0 is  equivalent to
$$
c_{1} f_{1,\pm}(z,0)+c_{2} f_{1,\mp}(z,0)= d_{k} f_{k,\mp}(z,0)\quad \forall k\ne 1,
$$
while the Kirchoff law
is equivalent to
$$
c_{1} f'_{1,\pm}(z,0)+c_{2} f'_{1,\mp}(z,0)+
\sum_{k=2}^N   d_{k} f'_{k,\mp}(z,0)=0.
$$

Since by Lemma \ref{lpropJosti} $f_{k,\mp}(z,0)$ is different from 0,
we will get
$$
d_{k}=\frac{d_{2}f_{2,\mp}(z,0)}{f_{k,\mp}(z,0)}, \forall k\ne 1,
$$
and the continuity and the Kirchoff law reduce to
$$
\left\{
\begin{array}{ll}
c_{1} f_{1,\pm}(z,0)+c_{2} f_{1,\mp}(z,0)= d_2 f_{2,\mp}(z,0),\\
 c_{1} f'_{1,\pm}(z,0)+c_{2} f'_{1,\mp}(z,0)=
-  d_{2}f_{2,\mp}(z,0)  \sum_{k=2}^N   \frac{f'_{k,\mp}(z,0)}{f_{k,\mp}(z,0)}.
\end{array}
\right.
$$
This $2\times 2$ linear system in $c_1$ and $c_2$ has a unique solution since its determinant is
exactly $W_{1,\pm}(z)$. The resolution of this system leads to the conclusion with the choice $d_2=1$.
\end{proof}

\begin{remark}
{\rm
The choice (\ref{geneigenfcts}) was guided by the simple case when $N=2$ and $V_k=0, k=1,2$.
In that case, we recover the standard generalized eigenfunctions, namely
 $$F_{z^2,1}^{\pm,1}(x)=e^{\pm izx}, \forall x>0,$$
as well as
 $$F_{z^2, 2}^{\pm,1}(x)=e^{\mp izx}, \forall x>0.$$
}
\end{remark}

According to   Lemma \ref{lpropJosti}, we see that
$$
c_{j,+,1}(z)=-\frac{iz N}{W_{j,+}(z)},
$$
which is always different from 0 if $z\in \CC^+, z\ne 0$, while
$$
c_{j,-,1}(z)=\frac{iz f_{j+1,+}(z,0)}{W_{j,-}(z)}
\sum_{k=1}^N
\frac{1-R_{k,2}(z)}{1+R_{k,2}(z)},
$$
is not clearly different from zero.
This is investigated in the next Lemma

\begin{lemma}\label{rgeneigenfcts}
Under the assumption (\ref{assumptionserge}), there exists $\kappa>0$ small enough such that
$$
s(z):=\sum_{k=1}^N
\frac{1-R_{k,2}(z)}{1+R_{k,2}(z)},
$$
satisfies
\beq\label{estsz}
|s(z)|\geq  C,\forall z\in B_\kappa,
\eeq
for some $C>0$.
\end{lemma}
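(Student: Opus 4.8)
The plan is to reduce the sum $s(z)$ to an expression in the Jost functions $f_{k,+}(z,\cdot)$ at the origin, where Lemmas~\ref{leigenfcts+nonnulle} and~\ref{lrj2borne} already give control, and then to bound $s$ from below region by region on the strip $B_\kappa$. Differentiating in $x$ the explicit form of $f_{k,+}$ on $(-\infty,0]$ given in (\ref{propJosti+}) yields $f'_{k,+}(z,0)=\frac{iz(1-R_{k,2}(z))}{T_k(z)}$, while (\ref{propJosti+0}) gives $f_{k,+}(z,0)=\frac{1+R_{k,2}(z)}{T_k(z)}$. Dividing these two relations, the factor $T_k(z)$ cancels and one obtains the key identity
\begin{equation}\label{prop.keyid}
\frac{1-R_{k,2}(z)}{1+R_{k,2}(z)}=\frac{f'_{k,+}(z,0)}{iz\,f_{k,+}(z,0)},\qquad\hbox{hence}\qquad s(z)=\frac{1}{iz}\sum_{k=1}^N\frac{f'_{k,+}(z,0)}{f_{k,+}(z,0)}.
\end{equation}
Moreover, since $w\mapsto\frac{1-w}{1+w}$ maps the open unit disk onto the right half plane, with $\Re\frac{1-w}{1+w}=\frac{1-|w|^2}{|1+w|^2}$, each summand of $s$ has positive real part precisely when $|R_{k,2}(z)|<1$; this is the geometric mechanism I would exploit.

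On the real axis this mechanism is sharp. For $r\in\RR\setminus\{0\}$ the scattering matrix $S_k(r)$ is unitary, so $1-|R_{k,2}(r)|^2=|T_k(r)|^2$, while $|1+R_{k,2}(r)|=|T_k(r)|\,|f_{k,+}(r,0)|$ by (\ref{propJosti+0}). Substituting into the real part of (\ref{prop.keyid}) gives the clean boundary formula
\begin{equation}\label{prop.axis}
\Re s(r)=\sum_{k=1}^N\frac{1}{|f_{k,+}(r,0)|^{2}}\geq\frac{N}{C_2^{2}}>0,
\end{equation}
with $C_2$ the constant of (\ref{bornitude}). Thus $s$ is bounded away from $0$ all along the boundary $\{\Im z=0\}$, uniformly in $r$, and I would try to propagate this into the strip.

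To do so I would split $B_\kappa$. For $|z|\geq R_0$ large, (\ref{bornitude1}) gives $R_{k,2}(z)\to0$, so each summand of $s$ tends to $1$ and $|s(z)|\geq N/2$. On the intermediate compact piece $\{\delta\leq|z|\leq R_0\}\cap B_\kappa$ the inequality $|R_{k,2}(r)|<1$ is strict along the real segment, hence $|R_{k,2}(r)|\leq 1-\delta'$ there; by the continuity and uniform boundedness of $R_{k,2}$ on $B_\kappa$ (Lemma~\ref{lrj2borne}), after shrinking $\kappa$ this persists to $|R_{k,2}(z)|<1$ on the whole piece, so every summand keeps positive real part and $\Re s(z)\geq c>0$ by compactness.

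The delicate region, and the main obstacle, is the corner near $z=0$, where I would invoke the generic/exceptional dichotomy of Lemma~\ref{lassumptionserge}. In the exceptional case $|R_{k,2}(0)|\leq C<1$, so by continuity $|R_{k,2}(z)|\leq C'<1$ on a small half-ball and each summand has real part at least $\frac{1-C'}{1+C'}$, giving $\Re s(z)\geq N\frac{1-C'}{1+C'}>0$. In the generic case $R_{k,2}(0)=-1$: indeed $1+R_{k,2}(z)=T_k(z)f_{k,+}(z,0)\to0$ since $T_k(0)=0$ while $f_{k,+}(0,0)\neq0$ by assumption (\ref{assumptionserge}), whereas $1-R_{k,2}(z)\to2$, so every summand of $s$ blows up. Using the zero-energy expansion $T_k(z)=\alpha_kz+o(z)$ of \cite{deift-trub} one has $z\,s(z)\to\frac{1}{i}\sum_k f'_{k,+}(0,0)/f_{k,+}(0,0)$; when this limit is nonzero, $|s(z)|\to\infty$ and the bound is immediate. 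The genuinely delicate sub-case is when these blow-ups cancel: there one must show that the resulting finite limit of $s$ still carries the strictly positive real part dictated by (\ref{prop.axis}), for which I would use the differentiability of the Jost functions in $z$ (available since $\gamma>5/2$, cf. (\ref{estmdot})) to argue that $\Re s$ extends continuously to $0$ with boundary value $\sum_k|f_{k,+}(0,0)|^{-2}>0$. Controlling precisely this cancellation through the zero-energy asymptotics of $T_k$ and $R_{k,2}$ is the step I expect to demand the most care; together the three regimes deliver the uniform lower bound $|s(z)|\geq C$ on $B_\kappa$.
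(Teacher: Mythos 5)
Your proof is correct in substance, and its skeleton coincides with the paper's: uniform positivity of $\Re s$ along the real axis, propagation into the strip by uniform continuity on the compact region $\{\delta\leq|z|\leq R_0\}\cap B_\kappa$, the limit $s_k(z)\to 1$ as $|z|\to\infty$ coming from (\ref{bornitude1}), and at $z=0$ the dichotomy ``singular parts add up, so $|s|$ blows up'' versus ``singular parts cancel, so $s$ extends continuously with positive real part''. In fact your key identity is the paper's computation in disguise: since $f_{k,+}(z,0)=\frac{1+R_{k,2}(z)}{T_k(z)}$ and $f'_{k,+}(z,0)=\frac{iz(1-R_{k,2}(z))}{T_k(z)}$, your logarithmic-derivative form of $s$ is exactly the factorization $s_k=\frac{1-R_{k,2}}{T_k}\cdot\frac{T_k}{1+R_{k,2}}$ on which the paper's proof runs, and your limit of $z\,s(z)$ is ($i$ times) the paper's constant $K=\sum_{k\ \rm generic}\nu_k/\gamma_k$. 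Your one genuine improvement is the exact boundary formula $\Re s(r)=\sum_{k}|f_{k,+}(r,0)|^{-2}$ for real $r\neq 0$: it delivers the uniform real-axis bound $\Re s\geq N/C_2^{2}$ in one line from (\ref{bornitude}), whereas the paper reaches the analogous bound (\ref{estszreel}) by a longer detour (pointwise positivity from $|R_{k,2}(r)|<1$, separate limits at $0$ and at infinity, then continuity and compactness on $\RR$). Conversely, the sub-case you explicitly defer---cancellation of the $1/z$ singularities---is exactly where the paper does its heavy lifting: it proves $s(z)=\frac{iK}{z}+r(z)$ with $r$ continuous at $0$ \emph{as a function on the strip}, not only along $\RR$, using the mean value theorem with $\dot{m}_{k,+}$ on $\CC^+$ together with the bound (\ref{estmdot}). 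Your plan ($z$-differentiability of the Jost data, legitimate under $\gamma>5/2$) is the right tool, but to close the argument you must show that the difference quotient of $\sum_k f'_{k,+}(z,0)/f_{k,+}(z,0)$ converges as $z\to 0$ from within the complex half-disk, which is what the paper's remainder estimates accomplish. Finally, a small imprecision: your sentence ``every summand of $s$ blows up'' is accurate only for the generic branches; in the mixed situation the exceptional summands stay bounded, but your reformulation via $\lim_{z\to 0}z\,s(z)$ absorbs this automatically, since exceptional branches contribute $0$ to that limit.
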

\begin{proof}
Clearly $s$ is continuous on $B_\kappa\setminus\{0\}$ for $\kappa$ small enough, hence we first analyze the behaviour of $s$
near $z=0$.

For $z\in B_\kappa\setminus\{0\}$ and $k\in\{1,\ldots, N\},$ we write
$$
s_k(z):=\frac{1-R_{k,2}(z)}{1+R_{k,2}(z)}=\frac{1-R_{k,2}(z)}{T_{k}(z)}
\frac{T_{k}(z)}{1+R_{k,2}(z)}.
$$
The absolute value of the second factor is uniformly bounded from below
 on $B_\kappa$ thanks to Lemmas \ref{lpropJosti} and \ref{leigenfcts+nonnulle}.

For the first factor, we distinguish between the generic and the exceptional case:
In the exceptional case,
$$
|T_{k}(z)|\geq c_k, \forall z\in B_\kappa,
$$
for some $c_k>0$ (and $\kappa$ small enough) and therefore $s_k$ is continuous on $B_\kappa$.

In the generic case, using (\ref{expTj}) and (\ref{expR2j}),
we may write
$$
\frac{1-R_{k,2}(z)}{T_{k}(z)}=1-\int_{0}^{+\infty}
\frac{1+e^{2izy}}{2iz}  \tilde V_k(y) m_{k,+}(z,y) \, dy\quad \forall z\in \CC^+, z\ne 0.
$$
As underlined before, the derivative $\dot{m}_{k,+}$ of $m_{k,+}$
with respect to $k$ exists, is continuous on $\CC^+$and satisfies
(\ref{estmdot}). Accordingly, using the mean value theorem and the
dominated convergence theorem, we get for all $z\ne 0$ small enough
$$
\frac{1-R_{k,2}(z)}{T_k(z)}=1-\frac{\nu_k}{iz}+r_k(z),
$$
where $r_k$ is a continuous function at $z=0$
and  $\nu_k=\int_0^{+\infty}   V_k(t) m_{k,+}(0,t)\,dt$ (that is different from zero
because we are in the generic case).

In the same manner we can refine (\ref{serge10}) and prove that
$$
\frac{1+R_{k,2}(z)}{T_k(z)}=\gamma_k+z r^{(1)}_k(z),
$$
where $r^{(1)}$ is a continuous function at $z=0$ and
$\gamma_k=1+\int_0^{+\infty} t V_k(t) m_{k,+}(0,t)\,dt$ that   is a real number
 different from 0 by our
hypothesis (\ref{assumptionserge}).
Consequently for $z$ small enough we will get
\be\label{serge100}
\frac{T_k(z)}{1+R_{k,2}(z)}=\gamma_k^{-1}+z r^{(2)}_k(z),
\ee
where $r^{(2)}$ is a continuous function at $z=0$.

The two previous expansions show that for all $z\ne 0$ small enough
$$
s_k(z)=-\frac{\nu_k}{i \gamma_k z}+   r^{(3)}_k(z),
$$
where $r^{(3)}$ is a continuous function at $z=0$.

In summary, we have obtained that for all $z\ne 0$ small enough
$$
s(z)=-\frac{1}{i z} \sum_{k \rm generic}   \frac{\nu_k}{\gamma_k}+r(z),
$$
where $r$ is a continuous function at $z=0$.

Now we can distinguish two cases:
\\
i) If $\ds \sum_{k \rm generic}   \frac{\nu_k}{\gamma_k}=0$,
then $s$ is continuous at $z=0$, and therefore $s$ is continuous on $B_\kappa$.
\\
ii) If $K:= \ds \sum_{k \rm generic}   \frac{\nu_k }{\gamma_k}\ne 0$, then
$s$ blows up at $z=0$ and therefore
there exists $\delta_0$ small enough such that
\be\label{serge20}
|s(z)|\geq \frac{K}{2|z|}, \forall |z|<\delta_0.
\ee

Now for $|z|$ large, by
(\ref{bornitude1}) we have
$$
\lim_{|z|\to {+\infty}} s_k(z)=1,
$$
hence there exists $R_0$ large enough such that
\be\label{estszlarge}
\Re s(z)\geq \frac{N}{2},\forall z\in B_\kappa: |z|>R_0.
\ee

For small value of $|z|$, we  first restrict ourselves on the real line.
First we notice that
$$
\Re s_k(z)=\Re \frac{1-R_{k,2}(z)}{1+R_{k,2}(z)} =\frac{1-|R_{k,2}(z)|^2}{|1+R_{k,2}(z)|^2}.
$$
But
according to parts II and  V of Theorem 1 of \cite{deift-trub},
$$
|R_{k,2}(z)|<1,\quad \forall z\in \RR, z\ne 0,
$$
and therefore
$$
\Re s_k(z)>0,\quad \forall z\in \RR, z\ne 0.
$$
Now thanks to (\ref{serge10}) and to the relation
$$
1-|R_{k,2}(z)|^2=|T_k(z)|^2,$$
valid for all real numbers $z$,
we deduce that
$$
\lim_{z\to 0\atop z\in \RR}\frac{1-|R_{k,2}(z)|^2}{|1+R_{k,2}(z)|^2}=\frac{1}{\gamma_{k}^2},
$$
where $\gamma_k=1+\int_0^{+\infty} t V_j(t) m_{j,+}(0,t)\,dt$ that by hypothesis is a real number
 different from 0.

This shows that
$$
\lim_{z\to 0\atop z\in \RR} \Re s(z)= \sum_{k=1}^N\frac{1}{\gamma_{k}^2},
$$
and consequently as $\Re s$ is a continuous function on $\RR$ that is different from zero
for all real numbers,  due to (\ref{estszlarge}), it satisfies
\be\label{estszreel}
\Re s(z)\geq C,\forall z\in \RR,
\ee
for some $C>0$.

In the first case mentioned before, namely if $K=0$,
then  by the uniform continuity of $\Re s$ on the compact set $B_\kappa\cap\{z_1\in \CC: 0\leq z_1\leq R_0\}$, where $R_0$ is the parameter introduced above, we deduce that
\be\label{estszcompact}
\Re s(z)\geq C/2,\forall z\in B_{\kappa'}\cap \{z_1\in \CC: |z_1|\leq R_0\},
\ee
if $\kappa'$ is chosen small enough.
In that case the conclusion directly follows from (\ref{estszlarge}) and (\ref{estszcompact}).

In the case when $K\ne 0$, we use the uniform continuity of $\Re s$ on the compact set $B_\kappa\cap\{ z_1 \in \CC:  \frac{\delta_0}{2}\leq  |z_1|\leq R_0\}$ (where $R_0,\delta_0$ are the parameter introduced above),
and (\ref{estszreel}) to conclude that
\be\label{estszcompactKne0}
\Re s(z)\geq C/2,\forall z\in B_{\kappa'}\cap \{z_1\in \CC: \frac{\delta_0}{2}\leq  |z_1|\leq R_0\},
\ee
if $\kappa'$ is chosen small enough.

In this second case the conclusion follows from (\ref{serge20}), (\ref{estszlarge}) and (\ref{estszcompact}).
\end{proof}

\begin{corollary}\label{corogeneigenfcts}
Under the assumption (\ref{assumptionserge}), for $\kappa>0$ small enough
there exist two positive constants $c_1, c_2$ such that
\beq
\label{estc1}
|c_{j,-,1}(z) W_{j,-}(z)|\geq c_1 |z|,\quad \forall z\in B_\kappa,\\
|c_{j,-,2}(z)|\leq c_2 |s(z)|,\quad \forall z\in B_\kappa.
\eeq
\end{corollary}
\begin{proof}
As
$$
c_{j,-,1}(z)=\frac{iz f_{j+1,+}(z,0)}{W_{j,-}(z)}
s(z),
$$
by the previous Lemma and Lemma \ref{leigenfcts+nonnulle}, we deduce that
(\ref{estc1}) holds.

By its definition and   Lemma \ref{lpropJosti}, we may write
$$
c_{j,-,2}(z)=iz\frac{f_{j+1,+}(z,0)}{W_{j,-}(z)}
\left(1+\sum_{k\ne j}
\frac{1-R_{k}(z)}{1-R_{k}(z)}\right),
$$
hence thanks to the definition of $s(z)$, we obtain
$$
c_{j,-,2}(z)=iz\frac{f_{j+1,+}(z,0)}{W_{j,-}(z)}
\left(  \frac{2R_{j,2}(z)}{1+R_{j,2}(z)}  +s(z) \right).
$$
Now recalling that
$$
W_{j,-}(z)=-W_{j,+}(z)=-\frac{2iz}{T_j(z)},
$$
we can write
\be\label{serge200}
c_{j,-,2}(z)= -\frac{f_{j+1,+}(z,0)}{2}
\left(  \frac{2R_{j,2}(z)T_j(z)}{1+R_{j,2}(z)}  +s(z) T_j(z)\right).
\ee
By Lemmas \ref{lpropJosti}, \ref{leigenfcts+nonnulle}, \ref{lrj2borne} and \ref{rgeneigenfcts} we deduce that there exists $C_1>0$ such that
$$
|c_{j,-,2}(z)|\leq C_1
(1+|s(z)|)\leq (\frac{C_1}{C}+C_1) |s(z)|,
$$
with the constant $C$ from (\ref{estsz}).
\end{proof}

\begin{corollary}\label{corosdansH1}
Under the assumption (\ref{assumptionserge}),
and if $V_k\in L^1_\gamma(0,\infty)$ with $\gamma>5/2$, for all $k=1,\ldots, N$,
then  for all $R>0$, $s^{-1}$ belongs to $H^1(-R,R)$.
\end{corollary}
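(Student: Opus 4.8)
The plan is to reduce the claim to a local statement near the single delicate point $z=0$ and to exploit the lower bound on $s$ already in hand. By estimate (\ref{estsz}) of Lemma \ref{rgeneigenfcts} we have $|s(z)|\geq C>0$ for all real $z$, so $s^{-1}$ is bounded and in particular $s^{-1}\in L^2(-R,R)$. Away from the origin the Jost functions $m_{k,\pm}$, and hence $T_k$ and $R_{k,2}$ through the representations (\ref{expTj})--(\ref{expR2j}), depend smoothly on the real parameter $z$; thus $s$ is $C^1$ on $(-R,R)\setminus\{0\}$ and there $(s^{-1})'=-s'/s^2$ is continuous. The only thing to establish is therefore that $s^{-1}$ is absolutely continuous across $z=0$ with $(s^{-1})'\in L^2$ in a neighbourhood of $0$. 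I emphasize that one must work with $s^{-1}$ itself and not with $s'$: when $s$ blows up like $1/z$ at the origin, $s'$ behaves like $1/z^2\notin L^2$, while $(s^{-1})'=-s'/s^2$ stays bounded because $s^2\sim 1/z^2$ cancels the singularity.

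To expose the structure near $0$, I would write $s=\sum_{k=1}^N B_k/A_k$ with $A_k:=(1+R_{k,2})/T_k$ and $B_k:=(1-R_{k,2})/T_k$, which by (\ref{expTj})--(\ref{expR2j}) have the integral forms already used in the proof of Lemma \ref{rgeneigenfcts}. The factor $A_k$ extends continuously to $z=0$ with $A_k(0)=\gamma_k\neq 0$ by assumption (\ref{assumptionserge}), whereas $B_k$ carries the $1/z$ singularity, with residue proportional to $\nu_k=\int_0^{+\infty}V_k\,m_{k,+}(0,\cdot)$ in the generic case and no singularity in the exceptional case. Clearing denominators, $s^{-1}=P/Q$ where $P(z):=iz\prod_{l}A_l(z)$ and $Q(z):=\sum_{k}\bigl(izB_k(z)\bigr)\prod_{l\neq k}A_l(z)$. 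Both $P$ and $Q$ extend continuously to $z=0$, with $P(0)=0$ and $Q(0)=-K\prod_l\gamma_l$, where $K=\sum_{k\ \mathrm{generic}}\nu_k/\gamma_k$ is exactly the quantity distinguishing the two cases of Lemma \ref{rgeneigenfcts}.

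Next I would establish the $z$-regularity of $P$ and $Q$ by differentiating the integral representations under the integral sign. This is legitimate and yields bounded derivatives because the derivative $\dot m_{k,+}$ exists, is continuous, and is bounded (estimate (\ref{estmdot}), available for $V_k\in L^1_2$), while the differentiated kernels $\partial_z\frac{e^{2izy}-1}{2iz}$ produce $y^2$-weights that are integrable against $V_k$. One then splits into the two cases. If $K\neq 0$, then $Q(0)\neq 0$ and $s^{-1}=P/Q$ is $C^1$ near $0$, hence $H^1$ there. If $K=0$, then $P(0)=Q(0)=0$; since $s^{-1}$ is bounded by (\ref{estsz}), the singularity is removable, and writing $P(z)=z\hat P(z)$, $Q(z)=z\hat Q(z)$ with $\hat Q(0)\neq 0$ gives $s^{-1}=\hat P/\hat Q$.

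The \textbf{main obstacle} is precisely the degenerate case $K=0$, and more generally the verification that $(s^{-1})'$ is square-integrable at $z=0$: after dividing out the common zero, the derivative of $\hat P/\hat Q$ requires one order of regularity of the Jost data beyond what boundedness of $\dot m_{k,+}$ provides, namely control of the increments of $\dot m_{k,+}(z,\cdot)$ as $z\to 0$. This is exactly where the hypothesis $\gamma>5/2$ enters: the decay $V_k\in L^1_\gamma(0,\infty)$ furnishes a Hölder estimate of exponent $\gamma-2>\tfrac12$ for the $z$-derivatives of $m_{k,+}$, and hence of $R_{k,2}$, $A_k$ and $B_k$. An exponent strictly above $\tfrac12$ is sharp enough to place the remaining singular factor of $(s^{-1})'$ in $L^2$ near the origin. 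Combining the bounded, continuous derivative away from $0$ with this $L^2$ control at $0$ shows that $s^{-1}$ is absolutely continuous on $[-R,R]$ with derivative in $L^2(-R,R)$, i.e. $s^{-1}\in H^1(-R,R)$.
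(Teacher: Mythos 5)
Your argument is correct, and it reaches the conclusion by a genuinely different bookkeeping of the same analytic ingredients. The paper proceeds additively: it proves $s(z)=\frac{iK}{z}+r_s(z)$ with $r_s\in H^1(-R,R)$, by showing $|\dot r_k(z)|\leq C(1+|z|^{\gamma-3})$ through repeated mean value arguments and the key Weder estimate (\ref{estl2.2weder}), and then inverts: if $K=0$ it uses $s=r_s$ together with the lower bound (\ref{estsz}), and if $K\neq 0$ it computes $\frac{d}{dz}\frac{1}{s}=\frac{iK-z^2\dot r_s(z)}{(iK+zr_s(z))^2}$ and bounds this in $L^2$ using $|iK+zr_s(z)|\geq C|z|$. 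You instead clear denominators, $s^{-1}=P/Q$ with $P=iz\prod_l A_l$ and $Q=\sum_k(izB_k)\prod_{l\neq k}A_l=iz\,s(z)\prod_l A_l(z)$, and this multiplicative decomposition has a real advantage: since $A_l(z)=f_{l,+}(z,0)=m_{l,+}(z,0)$ and $izB_k(z)=iz-\frac{1}{2}\int_0^{+\infty}(1+e^{2izy})V_k(y)m_{k,+}(z,y)\,dy$, your $P$ and $Q$ are genuinely $C^1$ across $z=0$ (the paper's remainder $r_s$ is only $H^1$), so the case $Q(0)=-K\prod_l\gamma_l\neq 0$ is finished by the quotient rule, using nothing beyond the boundedness (\ref{estmdot}) of $\dot m_{k,+}$; your scheme thus makes visible that the full strength of $\gamma>5/2$ is consumed only in the degenerate case $K=0$. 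In that case your reduction is also sound, and can even be streamlined: $\hat P=P/z=i\prod_l A_l$ is automatically $C^1$, and $\hat Q=Q/z=i\,s(z)\prod_l A_l(z)$ is bounded away from zero directly by (\ref{estsz}) and Lemma \ref{leigenfcts+nonnulle}, so the only singular object is $\hat Q'(z)=\frac{Q'(z)-Q'(0)}{z}-\frac{\hat Q(z)-Q'(0)}{z}$, which lies in $L^2$ near the origin as soon as $Q'$ is H\"older of exponent $>\frac{1}{2}$ at $z=0$; that is exactly what (\ref{estl2.2weder}) (exponent $\gamma-2>\frac{1}{2}$) delivers, so the threshold $\gamma>5/2$ enters at the same place and for the same reason as in the paper's estimate $|z|^{\gamma-3}\in L^2_{loc}$. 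Two points to tighten when writing this up: away from $z=0$ the cited results give only $C^1$ (not smooth) dependence of $m_{k,\pm}$, $T_k$, $R_{k,2}$ on $z$, which suffices for your argument; and the transfer of the H\"older bound from $\dot m_{k,+}$ to $Q'$ (your ``hence of $R_{k,2}$, $A_k$ and $B_k$'') should be carried out explicitly --- it requires $\int_0^{+\infty}y^2|V_k(y)|\,dy<\infty$, available since $\gamma>2$ --- but this is the same routine mean-value computation that the paper performs on $\dot r_k$, so there is no gap in substance.
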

\begin{proof}
With the notation from the previous Lemma, we see that
$r_k$ is given by
$$
r_k(z)=\int_{0}^{+\infty}
\frac{V_k(y)}{2iz}  \left( 2m_{k,+}(0,y) -(1+e^{2izy})m_{k,+}(z,y)\right) \, dy,
$$
and  is continuous on $\RR$.
Moreover for $z\in \RR^*=\RR\setminus\{0\}$ we easily see that  $r_k$ is differentiable at $z$
and that
$$
\dot{r}_k(z)=
-\int_{0}^{+\infty}
\frac{V_k(y)}{2iz^2}  \left(2m_{k,+}(0,y) -g_{k,+}(z,y)
+z \dot{g}_{k,+}(z,y)
\right) \, dy.
$$
where for shortness we have set
$$
g_{k,+}(z,y):=(1+e^{2izy})m_{k,+}(z,y).
$$
But the mean value theorem implies that
$$
g_{k,+}(z,y)=2m_{k,+}(0,y)+z\dot{g}_{k,+}(\theta z,y),
$$
for some $\theta \in (0,1)$ and therefore
$$
\dot{r}_k(z)=
\int_{0}^{+\infty}
\frac{V_k(y)}{2iz}  \left(\dot{g}_{k,+}(\theta z,y)-\dot{g}_{k,+}(z,y)
\right) \, dy, \quad \forall z\in \RR^*.
$$
As
$$
\dot{g}_{k,+}(z,y)=2iy e^{2izy} m_{k,+}(z,y)+(1+e^{2izy})\dot{m}_{k,+}(z,y),
$$
the previous identity can be equivalently written
\beqs
\dot{r}_k(z)&=&
\int_{0}^{+\infty}
V_k(y)   \big(y \, m_{k,+}(\theta z,y)\frac{e^{2i\theta zy} -e^{2izy}}{z}
\\
&+& y e^{2izy} \frac{m_{k,+}(\theta z,y)-m_{k,+}(z,y)}{z}\\
&+& \frac{e^{2i\theta zy} -e^{2izy}}{2iz} \dot{m}_{k,+}(z,y)\\
&+& (1+e^{2i\theta zy}) \frac{\dot{m}_{k,+}(\theta z,y)-\dot{m}_{k,+}(z,y)}{2iz}
\big) \, dy, \quad \forall z\in \RR^*.
\eeqs
Again by the mean value theorem we get
\beqs
\dot{r}_k(z)&=&
\int_{0}^{+\infty}
V_k(y)   \big(2i y^2 m_{k,+}(\theta z,y) e^{2i\theta' zy} (\theta-1)
\\
&+& y e^{2izy}  \dot{m}_{k,+}(\theta'' z,y) (\theta-1)\\
&+&  y e^{2i\theta' zy} (\theta-1) \dot{m}_{k,+}(z,y)\\
&+& (1+e^{2i\theta zy}) \frac{\dot{m}_{k,+}(\theta z,y)-\dot{m}_{k,+}(z,y)}{2iz}
\big) \, dy, \quad \forall z\in \RR^*,
\eeqs
for some $\theta', \theta''\in (\theta, 1)$. Note that we cannot apply the mean value theorem to the last term since $\dot{m}_{k,+}$ is not differentiable. But according to Lemma 2.2 of \cite{weder} we have
\be\label{estl2.2weder}
|\dot{m}_{k,+}(z,y)-\dot{m}_{k,+}(0,y)|\leq C |z|^{\gamma-2}, \quad \forall y\geq 0,
\ee
for some $C>0$ independent of $z$ and $y$. This estimate, (\ref{estm+}) and  (\ref{estmdot}) lead to
$$
|\dot{r}_k(z)|\leq C
\int_{0}^{+\infty}
|V_k(y)|  (y^2+y+|z|^{\gamma-2}) \, dy, \quad \forall z\in \RR^*.
$$
for some $C>0$.
Hence according to our hypothesis on $V_k$, we get
$$
|\dot{r}_k(z)|\leq C_1 (1+|z|^{\gamma-3}), \quad \forall z\in \RR^*,
$$
for some $C_1>0$.

This estimate and the continuity of $r_k$ imply that $r_k$ belong to $H^1(-R,R)$ for any $R>0$
due to the hypothesis $\gamma>5/2$.

In the same way we need to precise the splitting (\ref{serge100}) on the real line (actually near 0).
For that purpose, we consider
$$
g_k(z):=\frac{m_{k,+}(z,0)-m_{k,+}(0,0)}{z}, \forall z\in \RR,
$$
and show that $g_k$ belongs to $H^1(-R,R)$ for any $R>0$.
First $g_k$ is continuous at $0$ because $m_{k,+}(z,0)$ is in $C^1(\RR)$.
Second by Leibniz's rule we have
$$
\dot{g}_k(z)=\frac{\dot{m}_{k,+}(z,0) z-(m_{k,+}(z,0)-m_{k,+}(0,0))}{z^2}
$$
and therefore by  the mean value theorem we get
$$
\dot{g}_k(z)=\frac{\dot{m}_{k,+}(z,0) -\dot{m}_{k,+}(\theta z,0)}{z},
$$
for some $\theta\in (0,1)$ and we conclude by
 (\ref{estl2.2weder}).

But
we see that
$$
\frac{(m_{k,+}(z,0))^{-1}}{z}=\frac{(m_{k,+}(0,0))^{-1}}{z}+h_k(z)=\frac{1}{\gamma_k z}+h_k(z)
$$
with
$$
h_k(z)=\frac{m_{k,+}(z,0)-m_{k,+}(0,0)}{z m_{k,+}(z,0) m_{k,+}(0,0)}=\frac{g_k(z)}{m_{k,+}(z,0) m_{k,+}(0,0)}.
$$
According to the previous considerations, $g_k$ belongs to $H^1(-R,R)$, for any $R>0$
and since $m_{k,+}(\cdot,0)$ belongs to $C^1(\RR)$ and is uniformly bounded from below (due to Lemmas \ref{lpropJosti} and \ref{leigenfcts+nonnulle}), $\frac{1}{m_{k,+}(\cdot,0)}$ is also in $C^1(\RR)$.
Therefore $h_k$ also belongs to $H^1(-R,R)$, for any $R>0$.

Coming back to $s$,  recalling that
$$s(z)=\sum_{k=1}^N
 (1+\frac{i\nu_k}{z}+r_k(z)) (m_{k,+}(z,0))^{-1},
$$
we have finally shown that
$$
s(z)=i\frac{K}{z}+r_s(z),
$$
where $r_s$  belongs to $H^1(-R,R)$, for any $R>0$.

Now we distinguish the case $K=0$ to the other one:
In the first case, we have that $s=r_s$ belongs to $H^1(-R,R)$, for any $R>0$
and since $s$ is uniformly bounded from below by the previous Lemma, we deduce that
$\frac{1}{s}$ belongs to $H^1(-R,R)$, for any $R>0$.

If $K\ne 0$, then
$$
\frac{1}{s(z)}=\frac{z}{iK+zr_s(z)},
$$
that is a continuous function in $\RR$
and moreover for $z\in \RR^*$, we have after elementary calculations
$$
\frac{d}{dz} \frac{1}{s}(z)=\frac{iK-z^2\dot{r}_s(z)}{(iK+zr_s(z))^2}.
$$
Since this right-hand side is in $L^2(-R,R)$, for any $R>0$ (because the denominator is different from zero near $z=0$, while by   the previous Lemma, for any $z\in \RR^*$
$s(z)\geq C$ is equivalent to $|iK+zr_s(z)|\geq C|z|$), we still conclude that
$\frac{1}{s}$ belongs to $H^1(-R,R)$, for any $R>0$.
\end{proof}

\begin{corollary}\label{corouncoefdansH1}
Under the assumption (\ref{assumptionserge}),
and if $V_k\in L^1_\gamma(0,+\infty)$ with $\gamma>5/2$,
then   the function
$$
{\RR} \to {\CC}: z \to \frac{c_{j,-,2}(z) }{f_{j+1,+}(z,0) s(z)},
$$
belongs to $H^1(-R,R)$  for all $R>0$.
\end{corollary}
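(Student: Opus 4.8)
The plan is to reduce the quotient to a combination of functions already known to lie in $H^1(-R,R)$, exploiting that in one space dimension $H^1(-R,R)\hookrightarrow C[-R,R]$, so that $H^1(-R,R)$ is a Banach algebra: products of $H^1$ functions are again in $H^1$, and the reciprocal of an $H^1$ function that is continuous and bounded away from zero on the compact interval $[-R,R]$ is again in $H^1$. First I would simplify the target function. Starting from (\ref{serge200}) and dividing by $f_{j+1,+}(z,0)s(z)$, the factor $f_{j+1,+}(z,0)$ cancels; using the identity $m_{j,+}(z,0)=f_{j,+}(z,0)=\frac{1+R_{j,2}(z)}{T_j(z)}$ of Lemma \ref{lpropJosti}, one rewrites
\[
\frac{c_{j,-,2}(z)}{f_{j+1,+}(z,0)\,s(z)}
=-\frac{R_{j,2}(z)}{m_{j,+}(z,0)\,s(z)}-\frac{T_j(z)}{2}.
\]
Since $\frac{1}{m_{j,+}(\cdot,0)}\in C^1(\RR)$ (the Jost function $m_{j,+}(\cdot,0)$ is $C^1$ and bounded away from zero by Lemmas \ref{lpropJosti} and \ref{leigenfcts+nonnulle}) and $\frac{1}{s}\in H^1(-R,R)$ by Corollary \ref{corosdansH1}, the algebra property reduces the whole statement to proving that $T_j$ and $R_{j,2}$ themselves belong to $H^1(-R,R)$.

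To obtain this I would isolate the $z^{-1}$ singularities at the origin. Combining the exact relation $\frac{1+R_{j,2}(z)}{T_j(z)}=m_{j,+}(z,0)$ with the decomposition $\frac{1-R_{j,2}(z)}{T_j(z)}=1-\frac{\nu_j}{iz}+r_j(z)$ established in the proof of Corollary \ref{corosdansH1} (where $r_j\in H^1(-R,R)$ and $\nu_j=\int_0^{+\infty}V_j(t)m_{j,+}(0,t)\,dt$), adding and subtracting gives
\[
\frac{1}{T_j(z)}=Q_j(z)-\frac{\nu_j}{2iz},\qquad
\frac{R_{j,2}(z)}{T_j(z)}=P_j(z)+\frac{\nu_j}{2iz},
\]
with $Q_j=\tfrac12\bigl(1+m_{j,+}(\cdot,0)+r_j\bigr)$ and $P_j=\tfrac12\bigl(m_{j,+}(\cdot,0)-1-r_j\bigr)$, both in $H^1(-R,R)$.

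It then remains to invert these relations, and here I would separate the generic ($\nu_j\neq0$) and exceptional ($\nu_j=0$) cases. In the generic case set $d(z)=2izQ_j(z)-\nu_j=2iz/T_j(z)$; it lies in $H^1(-R,R)$, equals $-\nu_j\neq0$ at $z=0$, and is nonzero for $z\neq0$ because the transmission coefficient is finite and nonvanishing off the origin (recall $|T_j|^2=1-|R_{j,2}|^2>0$ there). Being continuous and nowhere zero on $[-R,R]$, $d$ is bounded away from zero, so $1/d\in H^1(-R,R)$; consequently $T_j=2iz/d$ and $R_{j,2}=(2izP_j+\nu_j)/d$ are in $H^1(-R,R)$. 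In the exceptional case $\nu_j=0$ gives $1/T_j=Q_j$ directly; here $Q_j$ is bounded away from zero since $|T_j|$ is bounded above by Lemma \ref{lrj2borne} and $|T_j|\geq c_j>0$ near the real axis, whence $T_j=1/Q_j\in H^1(-R,R)$ and $R_{j,2}=P_j/Q_j\in H^1(-R,R)$. Feeding $T_j,R_{j,2}\in H^1(-R,R)$ back into the displayed reduction of the first paragraph and invoking the algebra property yields the claim.

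The main obstacle is not the algebra but the control of the $z^{-1}$ singularities: one must check that after extracting the explicit $\nu_j/(2iz)$ terms the remainders $P_j,Q_j$ are genuinely $H^1$ (which is inherited from the estimates in Corollary \ref{corosdansH1}) and that the relevant denominators stay bounded away from zero on $[-R,R]$. The most delicate point is the exceptional case, where the representation $T_j=2iz/d$ degenerates because both the numerator $2iz$ and $d=2iz/T_j$ vanish at $z=0$, forcing the separate argument through $1/T_j=Q_j$.
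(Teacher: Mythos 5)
Your proposal is correct, and it reaches the conclusion by a genuinely different route at the decisive step. Both you and the paper start from the same reduction: dividing (\ref{serge200}) by $f_{j+1,+}(z,0)s(z)$ and using $f_{j,+}(z,0)=\frac{1+R_{j,2}(z)}{T_j(z)}$ to write the target as $-\frac{R_{j,2}(z)}{f_{j,+}(z,0)s(z)}-\frac{T_j(z)}{2}$, so that everything hinges on the regularity of $T_j$, $R_{j,2}$, $1/f_{j,+}(\cdot,0)$ and $1/s$. The paper then disposes of the scattering coefficients in one stroke: by Remark 10 of \cite{deift-trub} (available precisely because $\tilde V_j$ vanishes on a half-line) $T_j$ is analytic in a neighbourhood of the real axis, hence $C^1(\RR)$, and then $R_{j,2}=f_{j,+}(\cdot,0)T_j-1\in C^1(\RR)$ via Lemma \ref{lpropJosti}; the only $H^1$ ingredient is $1/s$ from Corollary \ref{corosdansH1}, and one concludes since a $C^1$ function times an $H^1$ function is $H^1$. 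You instead avoid the analyticity input altogether and prove $T_j, R_{j,2}\in H^1(-R,R)$ directly, by extracting the $\nu_j/(2iz)$ singularities from $\frac{1\mp R_{j,2}}{T_j}$ (reusing the $C^1$ regularity of $m_{j,+}(\cdot,0)$ and the $H^1$ remainder $r_j$ already established in the proof of Corollary \ref{corosdansH1}), and then inverting these relations with a generic/exceptional case split, using that $H^1(-R,R)$ is a Banach algebra whose elements with no zeros on $[-R,R]$ have $H^1$ reciprocals; your verification that the denominators $d=2iz/T_j$ (generic case, using $d(0)=-\nu_j\neq 0$ and $|T_j|^2=1-|R_{j,2}|^2>0$ off the origin) and $Q_j=1/T_j$ (exceptional case, using the upper bound on $|T_j|$) stay away from zero is sound. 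What each approach buys: the paper's argument is shorter and gives the stronger $C^1$ regularity of $T_j$ and $R_{j,2}$, but it leans on the analytic continuation specific to half-line-supported potentials; yours is more self-contained, needing only facts already used elsewhere in the paper (unitarity, nonvanishing of $T_j$ on $\RR\setminus\{0\}$, boundedness of $T_j$, and the $H^1$ bound on $r_j$), so it would survive in settings where Remark 10 is unavailable, at the price of the case analysis and of obtaining only $H^1$ (rather than $C^1$) regularity of the coefficients, which the algebra property shows is enough. One cosmetic remark: the upper bound on $|T_j|$ that you attribute to Lemma \ref{lrj2borne} is really Theorem 1 of \cite{deift-trub} (invoked inside that lemma's proof), or more simply the relation $|T_j|\leq 1$ on the real line; this is a citation slip, not a gap.
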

\begin{proof}
By (\ref{serge200}), we see that
$$
\frac{c_{j,-,2}(z) }{f_{j+1,+}(z,0) s(z)}=
 -\frac{1}{2}
\left(  \frac{2  R_{j,2}(z)T_j(z)}{(1+R_{j,2}(z)) s(z)}  + T_j(z)\right)
=-\frac{1}{2 }
\left(  \frac{2\ R_{j,2}(z)}{f_{j,+}(z,0) s(z)}  + T_j(z)\right).
$$
But according to Remark 10 of \cite{deift-trub}, $T_j$ is analytic in a neighbourhood of the real line,
hence it is at least in $C^1(\RR)$. On the other hand $f_{j,+}(z,0)=m_{j,+}(z,0)$
is $C^1(\RR)$ due to Remark 3 of \cite{deift-trub}, hence $\frac{1}{f_{j,+}(z,0)}$ has the same property due to Lemma \ref{leigenfcts+nonnulle}. Finally the identity (\ref{propJosti+0})
of    Lemma \ref{lpropJosti} yields
$$
R_{j,2}(z)=f_{j,+} (z,0) T_j(z)-1,
$$
hence it also belongs to $C^1(\RR)$.

The conclusion follows from the previous Corollary and these regularity properties
(the product of a $C^1$ function with a $H^1$ function is still in $H^1$).
\end{proof}

\begin{Definition}[Kernel of the resolvent] \label{def.K}
Let   the assumption (\ref{assumptionserge}) be satisfied, then
for all $z\in B_\kappa, z\ne 0$, all $j \in \{ 1, \cdots, N\}$,
and all $x\in R_j$,
we define (modulo $N$)
\[ K(x,x',z^2) = \left\{ \begin{array}{ll}
        \frac{1}{W_j(z)} F^{-,j}_{z^2,j}(x)
              F^{-,j+1}_{z^2,j}(x'), & \hbox{ for } x' \in  {R_j},
              \, x'> x,
\\
        \frac{1}{W_j(z)} F^{-,j+1}_{z^2,j}(x)
              F^{-,j}_{z^2,j}(x'), & \hbox{ for } x'\in R_j, \, x'< x,
\\
        \frac{1}{W_j(z)} F^{-,j+1}_{z^2,j}(x)
              F^{-,j}_{z^2,k}(x'), & \hbox{ for } x' \in  {R_k}, k
              \neq j,
        \end{array} \right.
\]
where $W_j(z)= c_{j,-,1}(z) d_{j+1,j,-}(z) W_{j,-}(z)$.
\end{Definition}

\begin{theorem} \label{theo1}
Let   the assumption (\ref{assumptionserge}) be satisfied and let $f \in \cal H$. Then, for $x \in {\cal R}$ and $z\in B_\kappa$ such that
$\Im z> 0$, we have
\be\label{resolvantformula} [R(z^2, H)f](x)= \int_{{\cal R}} K(x, x', z^2) f(x') \; dx'.
\ee
\end{theorem}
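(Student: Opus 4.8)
The plan is to verify that the claimed kernel $K(x,x',z^2)$ is indeed the integral kernel of the resolvent operator $R(z^2,H)=(H-z^2)^{-1}$ for $\Im z > 0$. Since $z^2$ lies in the upper half-plane (away from the spectrum $[0,+\infty)$), the resolvent exists and maps $\mathcal{H}$ into $\mathcal{D}(H)$. The standard route is to check that the operator $T_f(x):=\int_{\mathcal{R}} K(x,x',z^2)f(x')\,dx'$ satisfies $(H-z^2)T_f = f$ together with the boundary/transmission conditions, and that $T_f \in \mathcal{D}(H)\subset \mathcal{H}$; uniqueness of the resolvent then forces $T_f = R(z^2,H)f$. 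First I would fix a branch index $j$ and split the integral $T_f(x)$ for $x\in R_j$ into the three regions appearing in Definition \ref{def.K}, namely $x'\in R_j$ with $x'>x$, $x'\in R_j$ with $x'<x$, and $x'\in R_k$ for $k\ne j$.

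The core of the argument is the one-dimensional resolvent computation on each branch. On $R_j$ the kernel is built from the two generalized eigenfunctions $F^{-,j}_{z^2,j}$ and $F^{-,j+1}_{z^2,j}$, each solving $(-\frac{d^2}{dx^2}+\tilde V_j-z^2)u=0$, glued across the diagonal $x'=x$; this is exactly the classical Green's function construction for a Sturm--Liouville problem. Applying $-\frac{d^2}{dx^2}+V_j - z^2$ in the variable $x$ to $T_f$, the two homogeneous pieces contribute nothing away from the diagonal, and the jump in the $x$-derivative of $K$ across $x'=x$ produces a Dirac mass whose weight is governed by the Wronskian $W_{j,-}(z)$. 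I would verify that the normalizing factor $W_j(z)=c_{j,-,1}(z)\,d_{j+1,j,-}(z)\,W_{j,-}(z)$ in Definition \ref{def.K} is precisely what makes this jump equal to $1$, so that $(H-z^2)T_f = f$ holds distributionally on each $R_j$. The contributions from the other branches $k\ne j$ (where $K$ depends on $x$ only through the factor $F^{-,j+1}_{z^2,j}(x)$, a homogeneous solution) are annihilated by $H-z^2$ and hence do not disturb this computation.

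It then remains to confirm that $T_f$ lies in $\mathcal{D}(H)$, i.e.\ that it satisfies the continuity condition \rfb{t0} and the Kirchhoff law \rfb{t1} at the central node, and that it decays appropriately to be in $\mathcal{H}$. The transmission conditions follow directly from the construction in Lemma \ref{lgeneigenfcts}: the coefficients $c_{j,\pm,1}, c_{j,\pm,2}, d_{j,k,\pm}$ were chosen exactly so that each $F^{\pm,j}_{z^2}$ is continuous at $0$ and satisfies the Kirchhoff law, and these properties are inherited by the $x$-dependent factors of $K$ at the node. For integrability I would use the exponential decay $e^{\Im z\, x}$ present in $f_{k,-}(z,x)$ for $\Im z>0$, together with the bounds \rfb{estf+}--\rfb{estf-} on the Jost functions, to show that $T_f\in \mathcal{H}$ for $f\in\mathcal{H}$. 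Finally, since $z^2\notin\sigma(H)$ the resolvent is the unique operator with $(H-z^2)R(z^2,H)=I$ mapping into $\mathcal{D}(H)$, so $T_f=R(z^2,H)f$, establishing \rfb{resolvantformula}.

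I expect the main obstacle to be the bookkeeping in the jump-across-the-diagonal computation: one must carefully check that the three-part definition of $K$ is consistent at the corners (in particular that the two expressions for $x'\to x$ from above and below agree in value, giving continuity of $K$, while their $x$-derivatives differ by exactly the Wronskian-normalized amount), and that the cross-branch pieces assemble correctly so that the single factor $1/W_j(z)$ together with the chosen coefficients yields the $\delta$-weight $1$. Verifying that the specific combination $W_j(z)=c_{j,-,1}(z)\,d_{j+1,j,-}(z)\,W_{j,-}(z)$ is the correct normalization, rather than some other product of the branch coefficients, is the delicate point and will require tracing the definitions from Lemma \ref{lgeneigenfcts} through the Green's-function identity.
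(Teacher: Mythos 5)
Your strategy --- show that $T_f(x):=\int_{{\cal R}}K(x,x',z^2)f(x')\,dx'$ satisfies $(H-z^2)T_f=f$, that $T_f\in{\cal D}(H)$, and conclude by uniqueness of the resolvent --- is the standard Green's-function verification and belongs to the same family as the paper's proof, which computes the Wronskian $W_j(z)=c_{j,-,1}(z)\,d_{j+1,j,-}(z)\,W_{j,-}(z)$, shows it is nonzero (Lemma \ref{leigenfcts+nonnulle} and Corollary \ref{corogeneigenfcts}), and delegates the rest to Proposition 3.2 of \cite{meh}. The genuine gap is your sentence that the transmission conditions ``follow directly from the construction in Lemma \ref{lgeneigenfcts}\ldots and these properties are inherited by the $x$-dependent factors of $K$ at the node.'' This is not a formal consequence of Lemma \ref{lgeneigenfcts}, because Definition \ref{def.K} indexes the kernel by the branch containing the \emph{first} variable. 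For fixed $x'\in R_m$, the function $x\mapsto K(x,x',z^2)$ is \emph{not} proportional to any single generalized eigenfunction near the node: on $R_m$ (for $x<x'$) it equals $\frac{1}{W_m}F^{-,m}_{z^2,m}(x)\,F^{-,m+1}_{z^2,m}(x')$, whereas on $R_l$ with $l\ne m$ it equals $\frac{1}{W_l}F^{-,l+1}_{z^2,l}(x)\,F^{-,l}_{z^2,m}(x')$ --- different eigenfunctions, with different normalizing constants, on different branches. The equality of the $N$ node-side limits and the vanishing of the Kirchhoff sum are therefore identities coupling the $c$'s, $d$'s and $W_j$'s of \emph{different} eigenfunctions; in effect they encode the symmetry $K(x,x',z^2)=K(x',x,z^2)$ of the kernel, and they only come out after an explicit computation with the coefficient formulas of Lemma \ref{lgeneigenfcts}. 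Your proposal locates the delicacy in the diagonal-jump bookkeeping, but that part is routine; the real content you would have to supply is precisely this node computation, and it is nowhere in your plan.

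There is a route that avoids the issue entirely, and it is the one behind the paper's citation: verify the resolvent identity in the \emph{second} variable. For fixed $x\in R_j$, on the node side of $x$ the map $x'\mapsto K(x,x',z^2)$ equals the constant $\frac{1}{W_j}F^{-,j+1}_{z^2,j}(x)$ times the restriction of the \emph{single} network eigenfunction $F^{-,j}_{z^2}$, so it satisfies \rfb{t0} and \rfb{t1} in $x'$ by construction. For $u\in{\cal D}(H)$ one then shows $\int_{{\cal R}}K(x,x',z^2)\,[(H-z^2)u](x')\,dx'=u(x)$ by Green's formula in $x'$ on each branch: the node terms cancel because both $K(x,\cdot,z^2)$ and $u$ satisfy the transmission conditions, the $W_j$-normalized jump of $\partial_{x'}K$ across $x'=x$ produces $u(x)$, and the boundary terms at infinity vanish by dominated convergence --- which is exactly why the paper singles out the fact that $F^{-,j}_{z^2,k}\in L^2(R_k)$ for $k\ne j$ as the main ingredient. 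Since $H-z^2$ is a bijection from ${\cal D}(H)$ onto ${\cal H}$ whenever $z^2\notin\sigma(H)$ (note that your claim that $z^2$ lies in the upper half-plane fails for $\Re z\le 0$; for purely imaginary $z$ one also needs $\kappa$ small enough to stay away from the finitely many negative eigenvalues), this identity yields $T=R(z^2,H)$. Finally, correct a small slip in your integrability step: \rfb{estf-} is a \emph{growth} bound, $|f_{k,-}(z,x)|\le C(1+x)e^{\Im z\,x}$, not a decay bound; the decay you need comes from $f_{k,+}(z,x)=e^{izx}m_{k,+}(z,x)$, which by \rfb{estm+} is $O(e^{-\Im z\,x})$ for $\Im z>0$, and the kernel is arranged so that only $f_{+}$-type factors are evaluated at arbitrarily large arguments.
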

%
\begin{proof}
Fix $j\in \{1,\ldots, N\}$, and $z$ as in the statement. Then  we notice that
the Wronskian $W_{j}(z)$ between  $F^{-,j}_{z^2,j}$ and  $F^{-,j+1}_{z^2,j}$ is different from zero,
namely by Lemma \ref{lgeneigenfcts}  we have
\beqs
W_{j}(z)&=&[F^{-,j}_{z^2,j}, F^{-,j+1}_{z^2,j}](x)\\
&=& F^{-,j}_{z^2,j}(x) \left(F^{-,j+1}_{z^2,j}\right)'(x)-\left(F^{-,j}_{z^2,j}\right)'(x) F^{-,j+1}_{z^2,j}(x)
\\
&=&(c_{j,-,1}(z) f'_{j,-}(z,x)+c_{j,-,2}(z) f'_{j,+}(z,x)) d_{j+1,j,-}(z)  f_{j,+}(z,x)
\\
&-&(c_{j,-,1}(z) f_{j,-}(z,x)+c_{j,-,2}(z) f_{j,+}(z,x)) d_{j+1,j,-}(z)  f'_{j,+}(z,x)
\\
&=&c_{j,-,1}(z) d_{j+1,j,-}(z) W_{j,-}(z).
\eeqs
Hence by Lemma  \ref{leigenfcts+nonnulle} and Corollary  \ref{corogeneigenfcts}
this Wronskian is different from zero.

Consequently
the same arguments than in Proposition 3.2 of \cite{meh} show that (\ref{resolvantformula}) holds.
The main ingredient is that we can apply the dominated convergence theorem because
the generalized eigenfunction
$F^{-,j}_{z^2,k}$ is in $L^2(R_k)$ if $j\ne k$.
\end{proof}

\begin{remark}
{\rm
The choice of the kernel comes from this Theorem because
$F^{+,j}_{z^2,k}$ is not in $L^2(R_k)$ if $j\ne k$.
}
\end{remark}

 Here and below the complex square root is chosen in such a way
that $\sqrt{r \cdot e^{i \phi}} = \sqrt{r} e^{i \phi/2}$ with $r>0$ and $\phi
\in [-\pi,\pi)$. Accordingly for any positive real number $\lambda$ and any $\eps>0$, we will define
$$
z_\eps=\sqrt{\lambda+i \eps}
$$
that will be in $\CC^+$.

\begin{theorem}[Limiting absorption principle] \label{lim.abs}
Let   the assumption (\ref{assumptionserge}) be satisfied.
Let $\delta > 0$ be fixed. Then for all  real numbers $\lambda>0$, $0 < \eps < \delta$ and
$(x,x') \in {\cal R}^2$ we have
\begin{enumerate}
\item $\lim_{\alpha \rightarrow 0\atop \alpha>0} K(x,x',z_\alpha^2) =
      K(x,x',\lambda)$,
\item $|K(x,x',z_\eps^2)| \leq \frac{C}{\sqrt{\lambda}}
 e^{\gamma  (x + x')}$,
    where $0<\gamma<\max\{1,\delta\}$.
\end{enumerate}
\end{theorem}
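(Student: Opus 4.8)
The plan is to prove the limiting absorption principle by reducing both statements to the properties of the kernel $K(x,x',z^2)$ as given in Definition~\ref{def.K}, and then to track the behaviour of each building block (the Jost functions $f_{j,\pm}$, the Wronskians $W_{j,-}(z)$, and the coefficients $c_{j,-,1}, c_{j,-,2}, d_{j+1,j,-}$) as $z$ ranges over $B_\kappa$ and as $\Im z \to 0^+$. Since $K$ is assembled piecewise from products of the form $\frac{1}{W_j(z)} F^{-,j}_{z^2,\cdot}(x) F^{-,j+1}_{z^2,\cdot}(x')$, and each $F^{-,j}_{z^2,k}$ is a linear combination of $f_{k,\pm}(z,\cdot)$ with coefficients that are explicit rational expressions in the boundary values $f_{k,\pm}(z,0)$, $f'_{k,\pm}(z,0)$, $R_{k,2}(z)$, $T_k(z)$, and $s(z)$, both conclusions will follow from (a) the continuity of these quantities up to the real axis and (b) uniform lower bounds on the denominators.

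\medskip

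\textbf{Step 1 (Continuity, part~1).} First I would establish that each ingredient extends continuously from $\{\Im z>0\}$ to the real axis. The Jost functions $f_{j,\pm}(z,\cdot)$ are continuous in $z\in\CC^+$ by \cite{deift-trub} (via the Volterra representations \rfb{mj+}--\rfb{mj-} and the bounds \rfb{estm+}--\rfb{estm-}); the coefficients $T_j(z)$ and $R_{j,2}(z)$ are continuous on $B_\kappa$ by Lemma~\ref{lrj2borne} and the meromorphic-extension results quoted after \rfb{expR2j}. The denominators are controlled from below: $|f_{j,+}(z,0)|\geq C_1$ by Lemma~\ref{leigenfcts+nonnulle}, $|s(z)|\geq C$ by Lemma~\ref{rgeneigenfcts}, and $|c_{j,-,1}(z) W_{j,-}(z)|\geq c_1|z|$ by Corollary~\ref{corogeneigenfcts}. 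Hence $W_j(z)=c_{j,-,1}(z)d_{j+1,j,-}(z)W_{j,-}(z)$ is bounded away from zero (after noting $d_{j+1,j,-}(z)=f_{j+2,-}(z,0)/f_{j,-}(z,0)$ is continuous and nonvanishing by Lemma~\ref{lpropJosti}), so the reciprocal $1/W_j(z)$ is continuous up to the real axis. Passing to the limit $\alpha\to 0^+$ in each of the three cases of Definition~\ref{def.K} then yields $K(x,x',z_\alpha^2)\to K(x,x',\lambda)$, using that $z_\alpha=\sqrt{\lambda+i\alpha}\to\sqrt{\lambda}>0$ continuously.

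\medskip

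\textbf{Step 2 (Pointwise bound, part~2).} For the exponential estimate I would bound $|K(x,x',z_\eps^2)|$ term by term. The essential input is the growth of the Jost functions: \rfb{estf+} gives $|f_{j,+}(z,x)|\leq C$ uniformly, while \rfb{estf-} gives $|f_{j,-}(z,x)|\leq C(1+x)e^{\Im z\, x}$. Since $\Im z_\eps = \Im\sqrt{\lambda+i\eps}$ is small and positive and can be bounded by a quantity controlled by $\delta$, the factor $e^{\Im z_\eps x}$ is dominated by $e^{\gamma x}$ for suitable $0<\gamma<\max\{1,\delta\}$, and the polynomial factor $(1+x)$ is absorbed into the exponential after possibly enlarging $\gamma$. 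The coefficients $c_{j,-,1}, c_{j,-,2}, d_{j+1,j,-}$ are bounded on $B_\kappa$ by Lemmas~\ref{leigenfcts+nonnulle}, \ref{lrj2borne}, \ref{rgeneigenfcts} and Corollary~\ref{corogeneigenfcts}, up to factors of $|z|$ or $1/|z|$; crucially, the factor $1/W_j(z)$ carries a $1/|z|$ from the lower bound $|c_{j,-,1}(z)W_{j,-}(z)|\geq c_1|z|$, which produces the $1/\sqrt{\lambda}=1/|z|$ behaviour announced in the statement. Multiplying a uniformly bounded factor in $x$ (from $f_{\cdot,+}$) by a factor $C(1+x')e^{\gamma x'}$ (from $f_{\cdot,-}$), and symmetrically in the other cases, gives the claimed bound $\frac{C}{\sqrt{\lambda}}e^{\gamma(x+x')}$.

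\medskip

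\textbf{The main obstacle} I anticipate is bookkeeping the $|z|$-powers so that exactly one factor of $1/|z|=1/\sqrt{\lambda}$ survives: the coefficient $c_{j,-,1}$ behaves like $|z|/|W_{j,-}(z)|\sim |z|^0$ while $W_{j,-}(z)=-2iz/T_j(z)$ contributes its own $|z|$, and $c_{j,-,2}$ is only controlled by $|s(z)|$ which itself may blow up like $1/|z|$ in the case $K\neq 0$ of Lemma~\ref{rgeneigenfcts}. I would therefore have to verify carefully that in forming $K=\frac{1}{W_j}F^{-,j}F^{-,j+1}$ the potentially singular $s(z)$-factors appearing through $c_{j,-,2}$ are compensated by the $s(z)$ hidden in $c_{j,-,1}$ (recall $c_{j,-,1}=\frac{iz f_{j+1,+}(z,0)}{W_{j,-}(z)}s(z)$), so that the net singularity is exactly $1/|z|$ and no uncontrolled blow-up remains near $\lambda=0$. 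The uniformity of $\Im z_\eps \le \eps^{1/2}$-type bounds, together with the choice $0<\eps<\delta$, then fixes the admissible range of $\gamma$.
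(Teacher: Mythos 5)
Your proposal is correct and follows essentially the same route as the paper: part~1 by continuity of the Jost functions and coefficients up to the real axis together with $z_\alpha\to\sqrt{\lambda}$, and part~2 by a case-by-case bound using \rfb{estf+}--\rfb{estf-}, the lower bound $|W_j(z)|\geq C|z|$, and precisely the cancellation you flag as the main obstacle, which the paper resolves exactly as you propose (rewriting the second term's coefficient as $c_{j,-,2}(z)/\bigl(iz\,f_{j+1,+}(z,0)s(z)\bigr)$ and invoking $|c_{j,-,2}(z)|\leq c_2|s(z)|$ from Corollary~\ref{corogeneigenfcts}). Only a cosmetic slip: $d_{j+1,j,-}(z)=f_{j+2,+}(z,0)/f_{j,+}(z,0)$ (plus-type Jost functions, nonvanishing by Lemma~\ref{leigenfcts+nonnulle}), not a ratio of the $f_{\cdot,-}(z,0)$.
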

%
\begin{proof}
The first part of the Theorem is direct since $\lambda + i \alpha$ tends to $\lambda$
as $\alpha>0$ tends to 0 and consequently
$$
\sqrt{\lambda + i \alpha}\to \sqrt{\lambda},
$$
as $\alpha>0$ tends to 0. We further use the fact that  the functions $f_{j,\pm}(\cdot, x)$
and $f'_{j,\pm}(\cdot, x)$ are continuous in $\CC^+$ for any fixed $x\in \RR$.

For the second  part of the Theorem, we first use the estimates (\ref{estf+}) and
 (\ref{estf-}), this last one implying
\be\label{estf-eps}
|f_{j,-}(z_\eps,x)|\leq  C (1+x) e^{\Im z_\eps x}\leq
C (1+x) e^{\max\{1,\delta\} x}, \quad \forall x\in [0,{+\infty}),
\ee
where we have used the property
$$
\Im z_\eps=|\Im \sqrt{\lambda+i \eps}|\leq \max\{1, \Im (\lambda+i \eps)\}=
\max\{1, \eps\}.$$

Notice that by the definition $W_{j}(z)=c_{j,-,1}(z) d_{j+1,j,-}(z) W_{j,-}(z)$
and by Lemma  \ref{leigenfcts+nonnulle} and Corollary  \ref{corogeneigenfcts}, we get
\be\label{upperwj}
|W_{j}(z)|\geq C |z|,
\ee
for some $C>0$.

Now we distinguish between the following three cases:\\
1. If $x, x'\in R_j$ with $x'>x$, then
\beqs
K(x,x',z_\eps^2)&=&\frac{1}{W_j(z_\eps)} F^{-,j}_{z_\eps^2,j}(x)
              F^{-,j+1}_{z_\eps^2,j}(x') \\
&=&
\frac{1}{W_j(z_\eps)} \Big(c_{j,-,1}(z_\eps) f_{j,-}(z_\eps,x)+c_{j,-,2}(z_\eps) f_{j,+}(z_\eps,x))\Big) d_{j+1,j,-}(z_\eps)  f_{j,+}(z_\eps,x')
\\
&=&\frac{1}{W_{j,-}(z_\eps)}  f_{j,-}(z_\eps,x)f_{j,+}(z_\eps,x')
+\frac{c_{j,-,2}(z_\eps) }{iz_\eps  f_{j+1,+}(z_\eps,0) s(z_\eps)}
 f_{j,+}(z_\eps,x)  f_{j,+}(z_\eps,x').
\eeqs
As there exists $c>0$ such that
$$
|W_{j,-}(z)|\geq c |z|, \forall z\in \CC^+,
$$
by Lemma  \ref{leigenfcts+nonnulle} and Corollary  \ref{corogeneigenfcts}, we obtain
$$
|K(x,x',z_\eps)|\leq \frac{C}{|z_\eps|}(|f_{j,-}(z_\eps,x)|+|f_{j,+}(z_\eps,x)|) |f_{j,+}(z_\eps,x')|.
$$
The estimates (\ref{estf+}) and
 (\ref{estf-eps}) then yields
\be\label{serge40}
|K(x,x',z_\eps^2)|\leq \frac{C}{|z_\eps|}(1+ (1+x) e^{\max\{1,\delta\} x}).
\ee
2. If $x, x'\in R_j$ with $x'>x$, then
$$
K(x,x',z_\eps^2)=\frac{1}{W_j(z_\eps)} F^{-,j+1}_{z_\eps^2,j}(x)
              F^{-,j}_{z_\eps^2,j}(x'),
$$
and the above arguments (by simply exchanging the role of $x$ and $x'$) yields
\be\label{serge50}
|K(x,x',z_\eps)|\leq \frac{C}{|z_\eps|}(1+ (1+x') e^{\max\{1,\delta\} x'}).
\ee
3. If $x\in R_j$ and $x'\in R_k$ with $k\ne j$, we have
\beqs
K(x,x',z_\eps^2)&=&\frac{1}{W_j(z_\eps)} F^{-,j+1}_{z_\eps^2,j}(x)
              F^{-,j}_{z_\eps^2,k}(x')
\\
&=&\frac{1}{W_j(z_\eps)}
d_{j+1,j,-}(z_\eps)  f_{j,+}(z_\eps,x) d_{j+1,k,-}(z_\eps)  f_{k,+}(z_\eps,x).
\eeqs
Hence by Lemma  \ref{leigenfcts+nonnulle} and the estimates (\ref{estf+}) and
 (\ref{upperwj}), we obtain
\be\label{serge60}
|K(x,x',z_\eps^2)|\leq \frac{C}{|z_\eps|}.
\ee

The estimates (\ref{serge40}), (\ref{serge50}) and (\ref{serge60}) imply the conclusion
since
$|z_\eps|>\sqrt{\lambda}$.
\end{proof}

\begin{theorem} \label{res.id}
Take $f \in {\cal H} $ with a compact support and let $0\leq a < b < + \infty$. Then for any continuous
scalar function $h$ defined on the real line and for all $x \in R_j$, we have
$$( h(H) E(a,b)f)(x) =  -
\frac{1}{\pi}
 \int_{(a,b) } h(\lambda)
          \sum\limits_{k=1}^{N}
          \int\limits_{R_k} f(x')
          \Im K(x,x',\lambda) \; dx'
           \; d\lambda,
$$
where $E$ is the resolution of the identity of $H$.
\end{theorem}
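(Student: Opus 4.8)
The proof combines Stone's formula with the resolvent representation of Theorem \ref{theo1} and the limiting absorption principle of Theorem \ref{lim.abs}. The plan is the following. Since the point spectrum of $H$ is contained in $(-\infty,0)$, the interval $[a,b]\subset[0,+\infty)$ contains no eigenvalue, so $E(\{a\})=E(\{b\})=0$; hence the weighted Stone formula, adapted to the resolvent convention $R(\zeta,H)=(\zeta-H)^{-1}$ used here, reads for all $f,g\in{\cal H}$
\[
(h(H)E(a,b)f,g)=-\lim_{\eps\downarrow 0}\frac{1}{2\pi i}\int_a^b h(\lambda)\,\Big([R(z_\eps^2,H)-R(\overline{z_\eps^2},H)]f,g\Big)\,d\lambda,
\]
where $z_\eps^2=\lambda+i\eps$ and $\overline{z_\eps^2}=\lambda-i\eps$. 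I would first justify this identity via the spectral theorem: one has $h(H)E(a,b)=\int_{(a,b)}h\,dE$, and inserting the continuous weight $h$ into the standard Stone formula is legitimate because $h$ is bounded on $[a,b]$ and the endpoints carry no spectral mass. The convergence is a priori strong in ${\cal H}$; pairing with an arbitrary $g$ reduces it to the numerical limit above.

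Next I would insert the kernel. By Theorem \ref{theo1}, $[R(z_\eps^2,H)f](x)=\int_{\cal R}K(x,x',z_\eps^2)f(x')\,dx'$, so by Fubini $([R(z_\eps^2,H)f],g)=\int_{\cal R}\int_{\cal R}K(x,x',z_\eps^2)\,f(x')\,\overline{g(x)}\,dx'\,dx$. Since $H$ is self-adjoint, $R(\overline{z_\eps^2},H)=R(z_\eps^2,H)^*$, whose integral kernel is $\overline{K(x',x,z_\eps^2)}$. Using the reciprocity relation $K(x,x',z^2)=K(x',x,z^2)$ — valid because $H$ has real coefficients; on the two same-branch cases of Definition \ref{def.K} it is immediate, while on the cross-branch case it is the Green-function reciprocity identity, a computation I would carry out with the explicit formulas for $c_{j,\pm,1}$, $c_{j,\pm,2}$, $d_{j,k,\pm}$ and the Wronskians $W_{j,\pm}$, $W_j$ — the kernel of the difference collapses to $K(x,x',z_\eps^2)-\overline{K(x,x',z_\eps^2)}=2i\,\Im K(x,x',z_\eps^2)$. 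Combined with the prefactor $-\tfrac{1}{2\pi i}$, this produces exactly the constant $-\tfrac{1}{\pi}$ appearing in the statement.

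Finally I would pass to the limit $\eps\downarrow 0$. Part 1 of Theorem \ref{lim.abs} gives the pointwise convergence $K(x,x',z_\eps^2)\to K(x,x',\lambda)$, and part 2 gives the uniform bound $|K(x,x',z_\eps^2)|\le \tfrac{C}{\sqrt{\lambda}}e^{\gamma(x+x')}$. Because $f$ has compact support and $h$ is bounded on $[a,b]$, the dominating function $\tfrac{C}{\sqrt{\lambda}}e^{\gamma(x+x')}|f(x')|\,|g(x)|$ is integrable over $(a,b)\times{\cal R}\times({\rm supp}\,g)$ (note $\lambda\mapsto\lambda^{-1/2}$ is integrable on $(a,b)$ even when $a=0$), so dominated convergence together with Fubini lets me interchange the $\eps$-limit with the $\lambda$- and spatial integrations, yielding
\[
(h(H)E(a,b)f,g)=-\frac{1}{\pi}\int_{\cal R}\Big(\int_{(a,b)}h(\lambda)\sum_{k=1}^N\int_{R_k}f(x')\,\Im K(x,x',\lambda)\,dx'\,d\lambda\Big)\overline{g(x)}\,dx .
\]
As $g$ ranges over a dense set of ${\cal H}$, this identifies $(h(H)E(a,b)f)(x)$ with the claimed function of $x$; the latter is continuous in $x$ by Theorem \ref{lim.abs}, so the equality holds for every $x\in R_j$. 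The main obstacle will be the careful justification of the interchange of the $\eps$-limit with the iterated integrals — that is, producing the uniform integrable majorant from the limiting absorption principle and exploiting the compact support of $f$ — together with the verification of the cross-branch reciprocity of $K$, which is exactly what converts the adjoint kernel into the symmetric quantity $\Im K(x,x',\lambda)$ of the final formula.
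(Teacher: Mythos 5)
Your proposal takes essentially the same route as the paper, whose proof is given only by reference (to Lemma~3.13 of \cite{fam2} and Proposition~4.5 of \cite{FAMetall10}) and lists exactly your ingredients: Stone's formula, the resolvent kernel representation of Theorem~\ref{theo1}, and the limiting absorption principle of Theorem~\ref{lim.abs} used to justify dominated convergence, with the compact support of $f$ supplying the integrable majorant. The kernel reciprocity you flag as the remaining computation (needed to turn $R(z_\eps^2,H)-R(\overline{z_\eps^2},H)$ into $2i\,\Im K$) can also be obtained abstractly, since $V$ is real-valued: conjugating the resolvent equation gives that the kernel of $R(\overline{z_\eps^2},H)$ is $\overline{K(x,x',z_\eps^2)}$ directly, so no explicit manipulation of the coefficients $c_{j,\pm,1}$, $c_{j,\pm,2}$, $d_{j,k,\pm}$ is required.
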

\begin{proof}
The proof is similar to the one  of Lemma~3.13 of \cite{fam2}
(see also Proposition 4.5 of
\cite{FAMetall10}) and is therefore omitted.
The main ingredients are the use of   Stone's formula,
Theorem~\ref{theo1} and    the limiting absorption principle
Theorem \ref{lim.abs} (that allows to apply the dominated convergence theorem).
\end{proof}
%
\begin{remark} \label{type of spectrum}
 Theorem \ref{res.id} directly implies that
$$\sigma(H E[0,+\infty)) = \sigma_{ac}(H E[0,+\infty)) = [0,+\infty)
\hbox{ and } \sigma_{pp}(H) \subset (-\infty,0),$$
where $\sigma_{ac}$ is the absolutely continuous spectrum   and
$\sigma_{pp}$ the pure point spectrum.  The additional informations
that
$$\sigma(H E(-\infty,0)) = \sigma_{pp}(H) $$
and that this set is finite follow from chapter 2 of \cite{berzin}.
\end{remark}
%
\section{Proof of Theorems \ref{mainresult} and \ref{high frequency perturbation}} \label{perturbed}

       The proof of the $L^\infty-$time decay will be carried out by
manipulating the solution formula in a way to reduce the problem to
the well known case of the free Schr\"{o}dinger equation on the line
\cite{ReedSimonII}, p.~60.

We shall decompose an general initial conditions into a part with a
spectral representation with compact support and a part with a
sufficiently high lower cutoff energy (frequency). The technique
will be different in the two cases.

\subsection{High energy limit}

For high energy (frequency) initial conditions, we can use an
expansion (called Born series) of the resolvent of the Hamiltonian
with potential in terms of the free resolvent (Proposition \ref{Born
series}). To this end we use a formula for the free resolvent
established in \cite{meh}. This leads to a corresponding  expansion
of the Schr\"{o}dinger group via Stone's formula. Then we adapt a
technique of \cite{GS} to extract the expression corresponding to
the Schr\"{o}dinger group on the line to the formulas of the
transmission problem, see Theorem \ref{high frequency perturbation
section}, part 1. While doing this, we improve the calculations of
\cite{GS} in the sense that we find an explicit expression for the
coefficient of the time decay in terms of the cutoff frequency and
the potential. This explicit knowledge is essential to deduce from
this the perturbation theorem \ref{high frequency perturbation
section}, part 3, using the fact that the free Schr\"{o}dinger group
is the first term of the expansion. The results of this section are
of independent interest and Theorem \ref{high frequency perturbation
section}, part 3 seems to be new even on the line.

\begin{proposition}\label{Born series}
Let
$R_0(\lambda + i \varepsilon) = \left( - \frac{d^2}{dx^2} - (\lambda
+ i \varepsilon) \right)^{-1}$
and
$R_V(\lambda + i \varepsilon) = \left( H - (\lambda + i \varepsilon)
\right)^{-1}.$ Then we have
\begin{enumerate}
  \item
the representation
  $$\lim_{\varepsilon \rightarrow 0, \varepsilon > 0}
  [R_0(\lambda + i\varepsilon)f](x)
  = [R_0(\lambda + i0)f](x) = \int_{\cal R} K_0(x,x',\lambda + i0)
  f(x') dx'$$
\\
  for almost all $x \in \RR$  and $f \in L^2({\cal R})$
  with
\begin{equation}\label{free kernel}
K_0(x,x^\prime,\lambda \pm i 0) = \frac{\mp i}{N \sqrt{\lambda}} \,
\left\{
\begin{array}{ll}
(1 - \frac{N}{2})e^{\pm i (x + x') \sqrt{\lambda}}
+
\frac{N}{2} \, e^{\pm i  |x - x'| \sqrt{\lambda}} , \, x' \in
\overline{R_j}, \\
(1 - \frac{N}{2})e^{\pm i (x + x') \sqrt{\lambda}}
+
\frac{N}{2} \, e^{\pm i  (x - x') \sqrt{\lambda}} , \, x' \in
\overline{R_k}, k \neq j \, ,
\end{array}
\right.
\end{equation}
  \item
the estimate
\be
\label{estkernelho} \left| K_0(x,x^\prime,\lambda \pm
0) \right| \leq \frac{N-1}{N \sqrt{\lambda}}, \, \forall \,
(x,x^\prime) \in {\cal R}^2,
\ee
  \item
the following expansion: suppose $N \geq 2$,
let $0<q_*<1$ and
$\lambda > \lambda_* = \frac{4(N-1)^2 \|V\|_1^2}{ N^2 q_*^2}$.
Then
$$
\left\langle R_V (\lambda \pm i 0) f,g \right\rangle = \ds \sum_{k
\geq 0 } \left\langle R_0 (\lambda \pm i0) (-V R_0(\lambda \pm i
0))^k f,g \right\rangle
$$
for any $V,f,g \in L^1({\cal R})$. The $+ (-)$ sign is valid, if
$\Im \lambda > 0$ (respectively $\Im \lambda < 0$).
\end{enumerate}
\end{proposition}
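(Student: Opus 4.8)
The plan is to treat the three assertions separately, the first two being essentially explicit computations and the third carrying the analytic content.

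For part~1 I would specialise the resolvent kernel of Definition~\ref{def.K} to the case $V=0$. When the potential vanishes the Jost functions reduce to pure exponentials, $f_{j,\pm}(z,x)=e^{\pm izx}$, so that $f_{j,\pm}(z,0)=1$, $T_j\equiv 1$, $R_{j,2}\equiv 0$, and the Wronskians become $W_{j,-}(z)=2iz=-W_{j,+}(z)$. Substituting these values into the formulas of Lemma~\ref{lgeneigenfcts} gives $c_{j,-,1}(z)=N/2$, $c_{j,-,2}(z)=1-N/2$, $d_{j,k,-}(z)=1$ and $W_j(z)=Niz$, whence the kernel of Definition~\ref{def.K} collapses to the announced expression with $z=\sqrt{\lambda+i\varepsilon}$. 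Equivalently one may simply solve the Green-function problem $(-\partial_x^2-z^2)G(\cdot,x')=\delta_{x'}$ on the star subject to the transmission conditions (\ref{t0})--(\ref{t1}) and to decay at infinity; for $N=2$ this reproduces, after the usual odd/even reflection, the free kernel on $\RR$, which is a convenient consistency check. Letting $\varepsilon\to 0^+$ and using $\sqrt{\lambda+i\varepsilon}\to\sqrt\lambda$ together with the continuity of the Jost data on $\CC^+$ (the free instance of Theorem~\ref{lim.abs}) yields (\ref{free kernel}).

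Part~2 is then immediate. In (\ref{free kernel}) every exponential has modulus one because $\sqrt\lambda$ is real, so
\[
|K_0(x,x',\lambda\pm i0)|\le \frac{1}{N\sqrt\lambda}\Big(\big|1-\frac{N}{2}\big|+\frac{N}{2}\Big).
\]
Since $N\ge 2$ we have $1-\frac N2\le 0$, hence $\big|1-\frac N2\big|+\frac N2=(\frac N2-1)+\frac N2=N-1$, and the bound (\ref{estkernelho}) follows (the off-diagonal case, being a single exponential, only produces the smaller constant $1/(N\sqrt\lambda)$).

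The real content is part~3. Estimate (\ref{estkernelho}) says exactly that $R_0(\lambda\pm i0)$ is bounded from $L^1({\cal R})$ to $L^\infty({\cal R})$ with norm at most $\frac{N-1}{N\sqrt\lambda}$; since multiplication by $V$ maps $L^\infty({\cal R})$ into $L^1({\cal R})$ with norm $\|V\|_1$, the composition $VR_0(\lambda\pm i0)$ is bounded on $L^1({\cal R})$ with
\[
\|VR_0(\lambda\pm i0)\|_{L^1\to L^1}\le \frac{(N-1)\|V\|_1}{N\sqrt\lambda}.
\]
The hypothesis $\lambda>\lambda_*=\frac{4(N-1)^2\|V\|_1^2}{N^2q_*^2}$ forces $\sqrt\lambda>\frac{2(N-1)\|V\|_1}{Nq_*}$, and hence $\|VR_0\|_{L^1\to L^1}<q_*/2<1$. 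Therefore $I+VR_0(\lambda\pm i0)$ is invertible on $L^1({\cal R})$ with Neumann series $\sum_{k\ge0}(-VR_0)^k$ converging geometrically (ratio $<q_*/2$), and the second resolvent identity $R_V(I+VR_0)=R_0$ gives $R_V=R_0\sum_{k\ge0}(-VR_0)^k=\sum_{k\ge0}R_0(-VR_0)^k$, each summand being an operator $L^1\to L^\infty$ of norm at most $\frac{N-1}{N\sqrt\lambda}(q_*/2)^k$. Pairing against $g\in L^1({\cal R})$ via the $L^\infty$--$L^1$ duality produces the stated identity, the series being absolutely convergent and dominated by $\frac{N-1}{N\sqrt\lambda}(1-q_*/2)^{-1}\|f\|_1\|g\|_1$; the extra factor $q_*$ in $\lambda_*$ is precisely what furnishes a quantitative geometric rate, needed later for the explicit constants $A,B$.

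The step that requires care, and which I expect to be the main obstacle, is the justification of the resolvent identity and of the Neumann inversion directly at the boundary values $\lambda\pm i0$, where $R_V$ is only available through the limiting absorption principle. I would therefore run the whole algebraic argument first for $\Im\lambda=\varepsilon>0$: there $R_0(\lambda+i\varepsilon)$ and $R_V(\lambda+i\varepsilon)$ are genuine bounded resolvents on $L^2({\cal R})$, the second resolvent identity is classical, and the kernel bound (\ref{estkernelho}) persists uniformly in $\varepsilon\ge0$ (the extra factor $e^{-\Im z_\varepsilon(\cdot)}$ only improves it), so the $L^1$ inversion above is available for every such $\varepsilon$. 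One then passes to the limit $\varepsilon\to0^+$ termwise: each finite product $R_0(-VR_0)^k$ converges because the free kernel converges pointwise and boundedly by part~1 and $V\in L^1({\cal R})$, while $R_V(\lambda+i\varepsilon)$ converges by Theorem~\ref{lim.abs}; the uniform geometric bound on the tail then legitimises the exchange of limit and summation through dominated convergence, with the $+$ (resp. $-$) determination attached to $\Im\lambda>0$ (resp. $\Im\lambda<0$).
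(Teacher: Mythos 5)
Your proposal is correct, and it reaches the result by a route that is recognizably different from the paper's at two points. For part 1 the paper simply invokes \cite{meh}, whereas you re-derive the free kernel by specialising the Section \ref{sec4} machinery to $V=0$ (legitimate, since assumption (\ref{assumptionserge}) holds trivially there); your computation $f_{j,\pm}(z,x)=e^{\pm izx}$, $d_{j,k,-}=1$, $W_j(z)=Niz$ is right, and it in fact exposes what appear to be typos in (\ref{free kernel}): on a different branch the phase should be $x+x'$ rather than $x-x'$ (your single exponential $e^{iz(x+x')}$ equals $(1-\frac N2)e^{iz(x+x')}+\frac N2 e^{iz(x+x')}$), and the overall sign depends on a Wronskian convention that the paper itself uses inconsistently between Lemma \ref{lgeneigenfcts} and Corollary \ref{corogeneigenfcts}; none of this affects the modulus bound of part 2, which you and the paper obtain identically. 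For part 3 the mechanisms genuinely differ: you invert $I+VR_0$ on $L^1({\cal R})$ by a Neumann series at $\Im\lambda=\varepsilon>0$, where the second resolvent identity is classical, and then pass to the boundary termwise by dominated convergence and Theorem \ref{lim.abs}; the paper never inverts anything and works directly at $\lambda\pm i0$, iterating the second resolvent identity finitely often and killing the remainder $\langle R_V(\lambda+i0)(VR_0(\lambda+i0))^k f,g\rangle$ by pairing it against $R_V(\lambda-i0)g$, which lies in $L^\infty({\cal R})$ because the Jost functions, hence the kernel $K$, are bounded for fixed $\lambda>0$. Your route buys independence from that $L^1\to L^\infty$ mapping property of the perturbed boundary resolvent, at the price of a limit interchange; the one caution there is that for $\varepsilon>0$ the perturbed kernel is only controlled by the exponentially growing bound of Theorem \ref{lim.abs}, so the pairings $\langle R_V(\lambda+i\varepsilon)h,g\rangle$ with $h\in L^1$ must be read through the adjoint (using $R_V(\lambda-i\varepsilon)g\in {\cal D}(H)\subset L^\infty$ for $g\in L^1\cap L^2$) and the identity proved first on a dense class of $f,g$ before extension by your uniform final bound -- an informality of the same order as the paper's own ``the equality comes from simple calculations.'' Incidentally your constant is the sharper one: $\lambda>\lambda_*$ gives $\|VR_0\|_{L^1\to L^1}<q_*/2$, which is exactly the quantitative rate exploited later, while the paper's displayed evaluation $\frac{4(N-1)\|V\|_1}{N\sqrt{\lambda_*}}=q_*$ is off by a factor of $2$.
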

\begin{proof}~
\\
\underline{1.:}
\\
Direct consequence of \cite{meh}.
\\
\underline{2.:}
\\
Follows from 1.
\\
\underline{3.:}
\\
From 2. and the assumption on $V$ it follows
$$
\|V R_0(\lambda \pm i 0)f\|_{1}
\leq \frac{N-1}{N \sqrt{\lambda}} \, \|V \|_1 \|f \|_1.
$$ Due to
(\ref{asymp.osc}) we see, that the Jost functions are bounded for
fixed $\lambda.$ Therefore one has
\[
    R_V(\lambda - i 0) g \in L^\infty ({\cal R}) \hbox{ for }
    \lambda > 0.
\]
Hence
\beqs
\left| \left\langle R_V (\lambda + i 0) (V R_0(\lambda + i 0)^k f,g
\right\rangle \right|
&\leq&
\| ( V R_0(\lambda + i 0))^k f \|_1 \,
\|R_V (\lambda - i 0) g \|_\infty
\\
&\leq&
\left(\frac{N-1}{N \sqrt{\lambda}}\right)^{k} \left\| V \right\|_1^k
\, \left\|f \right\|_1 \, \left\| R_V (\lambda - i
0)g\right\|_\infty
\\
&=&
q(\lambda)^k \, \left\|f \right\|_1 \,
\left\| R_V (\lambda - i 0)g\right\|_\infty
\eeqs
with $q(\lambda):=\frac{N-1}{N \sqrt{\lambda}}.$ Our assumption
$\lambda_* < \lambda$ implies
\[
q(\lambda)  <
\frac{4(N-1)}{N \sqrt{\lambda_*}} \, \|V \|_1
= q_* < 1.
\]
Therefore the series from the statement of 3. converges. The
equality comes from simple calculations.
\end{proof}
Note that the factor $4$ in the definition of $\lambda_*$ is not
necessary in this Proposition, but will be necessary later on.

Now we shall estimate the $L^1-$norm of the Fourier transform of a
frequency band cutoff function times all negative powers
$\lambda^{-n}$ of the frequency. These quantities measure the
influence of the cutoff function on the terms of the expansion of
the high frequency part of the solution. Note that in \cite{GS} it
is claimed (only indicating the steps of a proof), that there exists
a bound which is independent of $n.$ This does not seem to be
rigorously correct: writing down the details of the proof sketched
in \cite{GS}, we find an explicit bound in terms of certain norms of
the cutoff function, but which grows linearly in $n$. But this
growth has no influence on the convergence of the expansion of the
solution. Nevertheless the explicitness of the estimate will allow
us to give an upper bound of the coefficient of the time decay of
the solution.

\begin{Definition}
Let $\phi \in C^\infty(\RR)$ be such that
$0 \leq \phi(\lambda) \leq 1$
and $\phi(\lambda) = 1$ if $|\lambda| \leq 1$ and $\phi(\lambda) =0$
if $|\lambda| \geq 2.$
Let $\lambda_0 \geq 1$ and $L > 2 \lambda_0.$ Define
\begin{enumerate}
  \item
  $\cO \in C^\infty([1,\infty[)$ by
  $\cO (\lambda):= 1 - \phi(\frac{\lambda}{\lambda_0}), \, \lambda \geq 1,$
  \item
  $\cOL \in C^\infty([1,\infty[)$ by
  $\cOL (\lambda)
  := \cO (\lambda)\phi(\frac{\lambda}{\lambda_0}), \, \lambda \geq 1.$
\end{enumerate}
\end{Definition}


\begin{theorem} \label{p1}
For $n \in \nline, \lambda_0 \geq 1, L > 2 \lambda_0$ it holds\footnote{We write shortly $f^{\vee}={\mathcal F}^{-1} f$}:
$$
\left\|\left[\cOL (\lambda^2) \lambda^{-n}\right]^{\vee}\right\|_1
\leq c(n) \, \lambda_0^{-n/2},
$$
with  $c(0) = N_1 + N_1^2, \, c(1) = 2 \, (N_1 +N_1^2) + 32 \sqrt{2}
\, N_2, \, c(n) = \frac{4}{n-1} + 32 \sqrt{2} N_2 n, \, n \geq 2$
and hence $c(n) \leq M n, \, n \geq 1$, where $N_1 =
\left\|\left[\phi(\lambda^2)\right]^{\vee}\right\|_1$, $N_2 =
\left\|\phi \right\|_{C^2(\rline)}$
 and $M = 32 \sqrt{2} \, \max\left\{N_1 + N_1^2,N_2 \right\}.$
\end{theorem}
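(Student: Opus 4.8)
The plan is to strip off the $\lambda_0$-dependence by a dilation and then estimate the $L^1$-norm of the inverse Fourier transform by cutting the physical variable at $|x|=1$. First I would substitute $\lambda=\sqrt{\lambda_0}\,\mu$. Since $\cOL(\lambda^2)=\cO(\lambda^2)\phi(\lambda^2/L)=(1-\phi(\lambda^2/\lambda_0))\,\phi(\lambda^2/L)$, one finds $\cOL(\lambda^2)\lambda^{-n}=\lambda_0^{-n/2}\,\tilde g_n(\lambda/\sqrt{\lambda_0})$ with $\tilde g_n(\mu):=(1-\phi(\mu^2))\,\phi(\frac{\lambda_0}{L}\mu^2)\,\mu^{-n}$. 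As the $L^1$-norm of an inverse Fourier transform is invariant under dilation, this yields $\|[\cOL(\lambda^2)\lambda^{-n}]^{\vee}\|_1=\lambda_0^{-n/2}\,\|\tilde g_n^{\vee}\|_1$, and it remains to bound $\|\tilde g_n^{\vee}\|_1$ by $c(n)$ \emph{uniformly} in the ratio $\beta:=\lambda_0/L$. Here the hypothesis $L>2\lambda_0$ enters decisively as $\beta<1/2$: the support $\{|\mu|\in[1,\sqrt 2]\}$ of the derivatives of $\mu\mapsto 1-\phi(\mu^2)$ is then disjoint from the support $\{|\mu|\ge\sqrt{1/\beta}\}$ of the derivatives of $\mu\mapsto\phi(\beta\mu^2)$, so that all mixed Leibniz terms vanish and on each localised piece one of the two cutoffs equals $1$.

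For $n\ge 2$ I would write $\|\tilde g_n^{\vee}\|_1=\int_{|x|\le1}|\tilde g_n^{\vee}|+\int_{|x|>1}|\tilde g_n^{\vee}|$. On $|x|\le1$, the bound $\|\tilde g_n^{\vee}\|_\infty\le\|\tilde g_n\|_1$ together with $\|\tilde g_n\|_1\le 2\int_1^\infty\mu^{-n}\,d\mu=\frac{2}{n-1}$ produces the term $\frac{4}{n-1}$. On $|x|>1$, integrating by parts twice and using $\int_{|x|>1}x^{-2}\,dx=2$ gives $\int_{|x|>1}|\tilde g_n^{\vee}|\le 2\,\|\tilde g_n''\|_1$. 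The crucial point, and what I expect to be the main obstacle, is to obtain here a bound that is only \emph{linear} in $n$: a naive estimate bounding $\|\tilde g_n^{\vee}\|_1$ by a multiple of $\|\tilde g_n\|_2+\|\tilde g_n''\|_2$ would give growth like $n^{3/2}$, too weak to be hidden in $32\sqrt 2\,N_2\,n$. The remedy is precisely to estimate the second derivative in $L^1$ and not in $L^2$, because then the leading term $(1-\phi(\mu^2))\phi(\beta\mu^2)\frac{d^2}{d\mu^2}\mu^{-n}=n(n+1)(\cdots)\mu^{-n-2}$ has $L^1$-norm at most $n(n+1)\cdot 2\int_1^\infty\mu^{-n-2}\,d\mu=2n$, the factor $n+1$ cancelling exactly.

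All remaining Leibniz terms carry at least one derivative of a cutoff; by the disjoint-support remark they reduce to contributions localised near $|\mu|\sim 1$ (involving $\phi'(\mu^2),\phi''(\mu^2)$) or near $|\mu|\sim\sqrt{1/\beta}$ (involving $\phi'(\beta\mu^2),\phi''(\beta\mu^2)$), each controlled by $N_2=\|\phi\|_{C^2}$, while the weights $\mu^{-n-1},\mu^{-n}$ contribute only $O(N_2)$ uniformly in $\beta$ (the large-$\mu$ pieces even gain powers of $\beta<1/2$). Collecting these with the $2n$ above gives $\|\tilde g_n''\|_1\le C N_2 n$ and hence the high-$x$ contribution $32\sqrt 2\,N_2\,n$. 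The cases $n=0,1$ need separate treatment, because the low-$x$ bound $\|\tilde g_n\|_1\le\frac{2}{n-1}$ degenerates. For $n=0$ no scaling is needed: from $\cOL(\lambda^2)=\phi(\lambda^2/L)-\phi(\lambda^2/\lambda_0)\phi(\lambda^2/L)$, the scale invariance $\|[\phi(\lambda^2/L)]^{\vee}\|_1=N_1$ and Young's inequality for the product give $\|[\cOL(\lambda^2)]^{\vee}\|_1\le N_1+N_1^2=c(0)$. For $n=1$ the integral $\|\tilde g_1\|_1$ diverges only logarithmically in $\beta$, so on $|x|\le1$ I would fall back on the $n=0$ convolution estimate, accounting for the $2(N_1+N_1^2)$ term, while the $|x|>1$ part is handled exactly as above and contributes $32\sqrt 2\,N_2$. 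Finally $c(n)\le Mn$ for $n\ge1$ follows by inspection, using $N_2\ge\|\phi\|_\infty=1$.
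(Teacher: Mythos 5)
Your proposal is correct and takes essentially the same route as the paper's proof (Theorem \ref{thm5.6Kais} and Propositions \ref{prop5.3Kais}--\ref{prop5.9Kais}): split the physical variable at scale $\lambda_0^{-1/2}$, bound the inner piece by the $L^1$-norm of the symbol (for $n\geq 2$) or by convolution with $[\lambda^{-1}]^{\vee}=-i\,{\rm sign}$ (for $n=1$), bound the outer piece through the weight $\tau^2$ by the $L^1$-norm of the second derivative — where the $n(n+1)\int|\lambda|^{-n-2}d\lambda$ cancellation yields the linear-in-$n$ growth — and treat $n=0$ by Young's inequality plus the dilation invariance of $\left\|\left[\phi(\lambda^2/\alpha)\right]^{\vee}\right\|_1$. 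Your preliminary substitution $\lambda=\sqrt{\lambda_0}\,\mu$ only normalizes $\lambda_0$ to $1$ (the paper instead carries $\lambda_0$ and $L$ explicitly through the estimates), which is a cosmetic rather than substantive difference.
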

\begin{proof}
The proof follows from Theorem \ref{thm5.6Kais} and Propositions
\ref{prop5.7Kais} and \ref{prop5.9Kais} below.
\end{proof}
\begin{proposition}\label{prop5.3Kais}
Suppose $\lambda_0, L \geq 1$ and $2 \lambda_0 < L$. Then we have
for all $\lambda \in \rline$ 
\begin{eqnarray*}
  \left| \cOL (\lambda^2) \right| \leq
\mathbbm{1}_{\left\{\sqrt{\lambda_0} \leq |\lambda| \leq
\sqrt{2L}\right\}} (\lambda),
  \\
 \left|\frac{d}{d\lambda} \left( \cOL(\lambda^2)\right)\right| \leq 2|\lambda|  \left\|\phi\right\|_{C^1(\rline)}\big(
\frac{1}{\lambda_0} \,  \mathbbm{1}_{\left\{\sqrt{\lambda_0} \leq
|\lambda| \leq \sqrt{2\lambda_0}\right\}} (\lambda) + \frac{1}{L} \,
\mathbbm{1}_{\left\{\sqrt{L} \leq |\lambda| \leq \sqrt{2L} \right\}}
(\lambda)\big),
  \\
 \left| \frac{d^2}{d\lambda^2} \left(\cOL (\lambda^2) \right)\right| \leq
\left\|\phi\right\|_{C^2(\rline)} \,\Big( \left( \frac{2}{\lambda_0}
+ \frac{4 |\lambda|^2}{\lambda^2_0} \right) \,
\mathbbm{1}_{\left\{\sqrt{\lambda_0} \leq |\lambda|\leq
\sqrt{2\lambda_0} \right\}} (\lambda)
\\
+\left(\frac{2}{L} + \frac{4|\lambda|^2}{L^2} \right) \,
\mathbbm{1}_{\left\{\sqrt{L} \leq |\lambda| \leq \sqrt{2L}
\right\}}\Big).
\end{eqnarray*}
\end{proposition}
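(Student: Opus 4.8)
The plan is to write $g(\lambda):=\cOL(\lambda^2)=\bigl(1-\phi(\lambda^2/\lambda_0)\bigr)\,\phi(\lambda^2/L)$ and to read off all three estimates from the support and size properties of $\phi$ and its derivatives, exploiting the separation of the two frequency scales $\lambda_0$ and $L$. Set $u(\lambda):=\phi(\lambda^2/\lambda_0)$ and $v(\lambda):=\phi(\lambda^2/L)$, so that $g=(1-u)v$. First I would record where each factor is nonconstant: since $\phi\equiv1$ on $[-1,1]$ and $\phi\equiv0$ off $[-2,2]$, the factor $1-u$ vanishes for $|\lambda|\le\sqrt{\lambda_0}$ while $v$ vanishes for $|\lambda|\ge\sqrt{2L}$; together with $0\le 1-u\le1$ and $0\le v\le1$ this yields the zeroth-order bound $|g|\le\mathbbm{1}_{\{\sqrt{\lambda_0}\le|\lambda|\le\sqrt{2L}\}}$ at once.

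For the derivatives I would differentiate $g=(1-u)v$ by the product and chain rules, giving $g'=-u'v+(1-u)v'$ and $g''=-u''v-2u'v'+(1-u)v''$, where $u'=\phi'(\lambda^2/\lambda_0)\,2\lambda/\lambda_0$, $u''=\phi''(\lambda^2/\lambda_0)\,4\lambda^2/\lambda_0^2+\phi'(\lambda^2/\lambda_0)\,2/\lambda_0$, and the analogous expressions for $v$ with $L$ in place of $\lambda_0$. The decisive point --- and essentially the only structural idea in the argument --- is the scale separation coming from the hypothesis $2\lambda_0<L$: the support of every $\lambda_0$-derivative is contained in $\{\sqrt{\lambda_0}\le|\lambda|\le\sqrt{2\lambda_0}\}$, whereas the support of every $L$-derivative lies in $\{\sqrt{L}\le|\lambda|\le\sqrt{2L}\}$, and these two sets are disjoint because $\sqrt{2\lambda_0}<\sqrt{L}$. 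Hence the cross term $u'v'$ vanishes identically; moreover on the inner transition region one has $v\equiv1$ (so $v'=v''=0$), while on the outer transition region one has $u\equiv0$ (so $u'=u''=0$ and $1-u\equiv1$). This collapses the formulas to $g'=-u'\,\mathbbm{1}_{\{\sqrt{\lambda_0}\le|\lambda|\le\sqrt{2\lambda_0}\}}+v'\,\mathbbm{1}_{\{\sqrt{L}\le|\lambda|\le\sqrt{2L}\}}$, and similarly $g''=-u''\,\mathbbm{1}_{\{\sqrt{\lambda_0}\le|\lambda|\le\sqrt{2\lambda_0}\}}+v''\,\mathbbm{1}_{\{\sqrt{L}\le|\lambda|\le\sqrt{2L}\}}$.

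To finish I would bound the surviving terms crudely by $|u'|\le 2|\lambda|\,\|\phi\|_{C^1(\rline)}/\lambda_0$ and $|u''|\le\|\phi\|_{C^2(\rline)}\bigl(4\lambda^2/\lambda_0^2+2/\lambda_0\bigr)$, with the same bounds for $v',v''$ after replacing $\lambda_0$ by $L$, each term supported on its indicator set. Substituting these into the two collapsed formulas reproduces exactly the claimed first- and second-order estimates.

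I do not expect any genuine obstacle: the computation is entirely bookkeeping. The one thing that must be verified carefully before discarding the cross and mixed terms is precisely the pair of facts extracted from $2\lambda_0<L$, namely the disjointness of the two transition regions and the fact that on each of them the other factor is identically $1$ (respectively $0$); were those supports to overlap, the clean two-indicator form of the bounds would break down, so this verification is the only step I would present in full detail.
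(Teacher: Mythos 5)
Your proof is correct and follows essentially the same route as the paper: product and chain rule for $\cOL(\lambda^2)=\bigl(1-\phi(\lambda^2/\lambda_0)\bigr)\phi(\lambda^2/L)$, support bookkeeping placing each derivative of $\phi(\lambda^2/\alpha)$ in $\{\sqrt{\alpha}\le|\lambda|\le\sqrt{2\alpha}\}$, and the disjointness of the two transition regions (from $2\lambda_0<L$) to kill the cross term $u'v'$ in the second derivative. The only cosmetic difference is that you collapse the formulas exactly by noting $v\equiv1$ on the inner region and $u\equiv0$ on the outer one, whereas the paper simply bounds the other factor by $1$; both yield the stated estimates.
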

\begin{proof}
Clearly we have
$$
\frac{d}{d\lambda}
\left(\phi\left(\frac{\lambda^2}{\alpha}\right)\right) =
\frac{2\lambda}{\alpha} \, \phi^\prime
\left(\frac{\lambda^2}{\alpha}\right) \hbox{ and }
\frac{d^2}{d\lambda^2}
\left(\phi\left(\frac{\lambda^2}{\alpha}\right)\right) =
\frac{2\lambda}{\alpha} \, \phi^\prime
\left(\frac{\lambda^2}{\alpha}\right) + \frac{4\lambda^2}{\alpha^2}
\, \phi^{\prime \prime} \left(\frac{\lambda^2}{\alpha}\right).
$$
Further we have for $\lambda_0, L$ and $\lambda \geq 1$
$$
\frac{d}{d\lambda} \left( \cOL (\lambda^2)\right) = - \left[ \phi
\left(\frac{\lambda^2}{\lambda_0}\right)\right]^\prime \, \phi
\left(\frac{\lambda^2}{L} \right) + \left(1 -
\phi\left(\frac{\lambda^2}{\lambda_0}\right)\right) \, \left[ \phi
\left(\frac{\lambda^2}{L}\right)\right]^\prime
$$
and
\begin{eqnarray*}
\frac{d^2}{d\lambda^2} \left(\cOL (\lambda^2)\right) = - \left[ \phi
\left(\frac{\lambda^2}{\lambda_0}\right)\right]^{\prime \prime} \,
\phi \left(\frac{\lambda^2}{L}\right) - 2 \left[\phi
\left(\frac{\lambda^2}{\lambda_0}\right)\right]^\prime \, \left[\phi
\left(\frac{\lambda^2}{L}\right)\right]^\prime
\\
+\left(1 - \phi\left(\frac{\lambda^2}{\lambda_0}\right)\right) \,
\left[ \phi \left(\frac{\lambda^2}{L}\right)\right]^{\prime \prime}.
\end{eqnarray*}
We estimate for $\alpha \geq 1$ and $\lambda \in \rline$:
$$
\left|\phi \left(\frac{\lambda^2}{\alpha}\right) \right| \leq
\left\| \phi \right\|_{C^0 (\rline)} \, \mathbbm{1}_{]- \infty,2]}
\left(\frac{\lambda^2}{\alpha}\right) = \left\| \phi \right\|_{C^0
(\rline)} \,\mathbbm{1}_{\left\{|\lambda| \leq \sqrt{2 \alpha}
\right\}} (\lambda)
$$
due to $\frac{\lambda^2}{\alpha} \leq 2 \Leftrightarrow |\lambda|
\leq \sqrt{2 \alpha}.$ Similarly we have
$$
\left|1 - \phi\left(\frac{\lambda^2}{\alpha}\right)\right| \leq
\mathbbm{1}_{[1,+\infty[} \left(\frac{\lambda^2}{\alpha}\right) =
\mathbbm{1}_{\left\{\sqrt{\alpha} \leq |\lambda| \right\}}
(\lambda).
$$
Further
$$
\left| \frac{d}{d\lambda} \left( \phi
\left(\frac{\lambda^2}{\alpha}\right)\right) \right| = \left|
\frac{2\lambda}{\alpha} \, \phi^\prime
\left(\frac{\lambda^2}{\alpha}\right)\right| \leq \frac{2
|\lambda|}{\alpha} \, \left\|\phi\right\|_{C^1(\rline)} \,
\mathbbm{1}_{\left\{\sqrt{\alpha} \leq |\lambda| \leq \sqrt{2
\alpha} \right\}} (\lambda)
$$
and
$$
\left| \frac{d^2}{d\lambda^2} \left(\phi
\left(\frac{\lambda^2}{\alpha}\right)\right)\right| \leq \left(
\frac{2}{\alpha} + \frac{4|\lambda|^2}{\alpha^2} \right) \,
\left\|\phi\right\|_{C^2(\rline)} \,
\mathbbm{1}_{\left\{\sqrt{\alpha} \leq |\lambda|\leq \sqrt{2 \alpha}
\right\}}.
$$
The three stated estimates directly follow from the previous
properties.
\end{proof}

\begin{proposition}\label{prop5.4Kais}
Let $n \in \nline^*$ and let $\lambda_0, L \geq 1$ with $2 \lambda_0
\leq L.$ Then, recalling that $N_2 = \left\|
\phi\right\|_{C^2(\rline)}$,
$$
\left\| \left[\cOL (\lambda^2) \, \lambda^{-n} \right]^{\vee} (\tau)
\, \tau^2 \right\|_\infty  \leq 16 \, \sqrt{2} \, N_2 \,
\lambda_0^{\frac{-n-1}{2}} \, n.
$$
\end{proposition}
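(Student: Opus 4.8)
The plan is to turn the multiplication by $\tau^2$ on the transform side into two differentiations in $\lambda$, and then bound an $L^\infty$ inverse transform by an $L^1$ norm. Set $g(\lambda):=\cOL(\lambda^2)\,\lambda^{-n}$. By Proposition \ref{prop5.3Kais} the factor $\cOL(\lambda^2)$ is supported in $\{\sqrt{\lambda_0}\le|\lambda|\le\sqrt{2L}\}$, so $g$ has compact support and two integrations by parts (with vanishing boundary terms) give $\tau^2\,g^\vee(\tau)=-\,(g'')^\vee(\tau)$. Using the elementary bound $\|h^\vee\|_\infty\le\frac{1}{2\pi}\|h\|_1$ one obtains
$$
\left\|\left[\cOL(\lambda^2)\,\lambda^{-n}\right]^\vee(\tau)\,\tau^2\right\|_\infty
=\left\|(g'')^\vee\right\|_\infty\le \frac{1}{2\pi}\,\|g''\|_1 ,
$$
so that everything reduces to an estimate of $\|g''\|_{L^1(\rline)}$.

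By Leibniz's rule,
$$
g''=\left(\cOL(\lambda^2)\right)''\lambda^{-n}
-2n\left(\cOL(\lambda^2)\right)'\lambda^{-n-1}
+n(n+1)\,\cOL(\lambda^2)\,\lambda^{-n-2},
$$
and I would estimate the three terms separately. For the first two I would insert the pointwise bounds on $\left(\cOL(\lambda^2)\right)'$ and $\left(\cOL(\lambda^2)\right)''$ supplied by Proposition \ref{prop5.3Kais}: both derivatives are supported only on the two transition annuli $A_1=\{\sqrt{\lambda_0}\le|\lambda|\le\sqrt{2\lambda_0}\}$ and $A_2=\{\sqrt L\le|\lambda|\le\sqrt{2L}\}$, of Lebesgue measure $2(\sqrt2-1)\sqrt{\lambda_0}$ and $2(\sqrt2-1)\sqrt L$ respectively. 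On $A_1$ one uses $\sqrt{\lambda_0}\le|\lambda|\le\sqrt{2\lambda_0}$, whence $|\lambda|^{-n}\le\lambda_0^{-n/2}$ and $|\lambda|^2\le2\lambda_0$; on $A_2$ one uses $\sqrt L\le|\lambda|\le\sqrt{2L}$ together with $L\ge2\lambda_0$ to replace every negative power of $L$ by the corresponding smaller power of $\lambda_0$. Multiplying the resulting pointwise bound by the measure of the annulus produces in each case a contribution of the form $C\,N_2\,\lambda_0^{\frac{-n-1}{2}}$, the second (middle) term carrying an additional factor $n$ coming from the derivative of $\lambda^{-n}$.

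The term requiring genuine care is the third, $n(n+1)\,\cOL(\lambda^2)\,\lambda^{-n-2}$, because $\cOL(\lambda^2)$ is supported on the whole large annulus $\{\sqrt{\lambda_0}\le|\lambda|\le\sqrt{2L}\}$, whose measure is comparable to $\sqrt L$ and hence far too large for a crude ``sup times measure'' estimate. Here I would instead integrate the power exactly, using $|\cOL(\lambda^2)|\le1$:
$$
\int_{\sqrt{\lambda_0}\le|\lambda|\le\sqrt{2L}} n(n+1)\,|\lambda|^{-n-2}\,d\lambda
=2n(n+1)\int_{\sqrt{\lambda_0}}^{\sqrt{2L}}\lambda^{-n-2}\,d\lambda
\le \frac{2n(n+1)}{n+1}\,\lambda_0^{\frac{-n-1}{2}}
=2n\,\lambda_0^{\frac{-n-1}{2}} .
$$
This is exactly the point at which the honest linear growth in $n$ (rather than a spurious $n^2$) is recovered: the factor $n(n+1)$ produced by differentiating $\lambda^{-n}$ twice is cancelled by the $\frac{1}{n+1}$ gained upon integrating $\lambda^{-n-2}$. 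Collecting the three contributions, each is bounded by a constant multiple of $N_2\,\lambda_0^{\frac{-n-1}{2}}n$ (using $N_2=\|\phi\|_{C^2(\rline)}\ge\|\phi\|_{C^0(\rline)}\ge1$ and $n\ge1$ to absorb the $n$-independent pieces into the $n$-dependent one), and a routine accounting of the numerical constants together with the factor $\frac{1}{2\pi}$ yields the stated bound $16\sqrt2\,N_2\,\lambda_0^{\frac{-n-1}{2}}\,n$; the crude estimates above in fact leave comfortable room. The sole obstacle is the handling of this third term, where one must integrate the power $\lambda^{-n-2}$ explicitly rather than estimate it by its supremum.
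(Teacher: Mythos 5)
Your proof is correct and follows essentially the same route as the paper: reduce $\tau^2$ times the inverse transform to the $L^1$ norm of the second derivative, expand by Leibniz's rule into three terms, insert the pointwise bounds of Proposition \ref{prop5.3Kais}, and integrate the $|\lambda|^{-n-2}$ term exactly so that the factor $n(n+1)$ is cut down to $n$ --- which is precisely the paper's key point as well. The only cosmetic difference is that the paper integrates all three terms exactly over their supporting sets, whereas you use a sup-times-measure bound for the two derivative terms (which, incidentally, avoids the $\frac{1}{n-1}$ factor in the paper's computation that is formally problematic at $n=1$); both yield the required $C\,n\,N_2\,\lambda_0^{\frac{-n-1}{2}}$ estimate.
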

\begin{proof}
By standard properties of the Fourier transform we have
$$
\left\| \left[\cOL (\lambda^2) \, \lambda^{-n} \right]^{\vee} (\tau)
\, \tau^2 \right\|_\infty = \left\| \left[(\cOL (\lambda^2) \,
\lambda^{-n})'' \right]^{\vee}  \right\|_\infty \leq \left\|(\cOL
(\lambda^2) \, \lambda^{-n})'' \right\|_1.
$$
Hence by Leibniz's rule and the previous proposition, we find that
\begin{eqnarray*}
\left\| \left[\cOL (\lambda^2) \, \lambda^{-n} \right]^{\vee} (\tau)
\, \tau^2 \right\|_\infty  \leq \int_{-\infty}^{+\infty}
\left|\frac{d^2}{d\lambda^2} \left( \cOL (\lambda^2) \right)\right|
\, |\lambda|^{-n} \, d\lambda
\\
+ 2 \, \int_{-\infty}^{+\infty} \left| \frac{d}{d\lambda} \cOL
(\lambda^2) \right| \, n \, |\lambda|^{-n-1} \, d \lambda +
\int_{-\infty}^{+\infty} \left| \cOL (\lambda^2) \right| \, n(n+1)
\, |\lambda|^{-n-2} \, d\lambda
\\
\leq \int_{-\infty}^{+\infty} \left[ \left( \frac{2}{\lambda_0} +
\frac{4 |\lambda|^2}{\lambda_0^2} \right) \, \left\|\phi
\right\|_{C^2(\rline)} \, \mathbbm{1}_{\left\{\sqrt{\lambda_0} \leq
|\lambda| \leq \sqrt{2 \lambda_0}  \right\}} (\lambda)
 \right.
\\
+\left. \left( \frac{2}{L} + \frac{4 |\lambda|^2}{L^2} \right) \,
\left\|\phi\right\|_{C^2(\rline)} \, \mathbbm{1}_{\left\{\sqrt{L}
\leq \sqrt{\lambda} \leq \sqrt{2L} \right\}} (\lambda)\right] \,
|\lambda|^{-n} \, d \lambda
\\
+ 2 \, \int_{-\infty}^{+ \infty} \left[ \frac{2|\lambda|}{\lambda_0}
\, \left\|\phi\right\|_{C^1(\rline)} \,
\mathbbm{1}_{\left\{\sqrt{\lambda_0} \leq |\lambda| \leq
\sqrt{2\lambda_0} \right\}} (\lambda) + \frac{2|\lambda|}{L} \,
\left\|\phi \right\|_{C^1(\rline)} \, \mathbbm{1}_{\left\{\sqrt{L}
\leq |\lambda| \leq \sqrt{2L} \right\}} (\lambda) \right] \, n \,
|\lambda|^{-n-1} \, d \lambda
\\+
\int_{-\infty}^{+\infty} \mathbbm{1}_{\left\{\sqrt{\lambda_0} \leq
|\lambda| \leq   \sqrt{2L} \right\}} (\lambda) \, n (n+1) \,
|\lambda|^{-n-2} \, d\lambda.
\end{eqnarray*}
By using that for $\alpha > 0$ we have
$$
|\lambda|^k \, \mathbbm{1}_{\left\{\sqrt{\alpha} \leq |\lambda| \leq
\sqrt{2\alpha} \right\}} (\lambda) \leq (2\alpha)^{k/2},
$$
we obtain
\begin{eqnarray*}
\left\| \left[\cOL (\lambda^2) \, \lambda^{-n} \right]^{\vee} (\tau)
\, \tau^2 \right\|_\infty   \leq   \left\|\phi\right\|_{C^2(\rline)}
\Big(\frac{10}{\lambda_0} \int_{\sqrt{\lambda_0} \leq |\lambda| \leq
\sqrt{2\lambda_0}} |\lambda|^{-n} \, d\lambda +
\frac{10}{L}\int_{\sqrt{L} \leq |\lambda| \leq \sqrt{2L}}
|\lambda|^{-n} \, d\lambda \Big)
\\
+ 2 \left\|\phi\right\|_{C^1(\rline)}
\,\Big(\frac{2\sqrt{2}}{\lambda_0^{1/2}}\,
 \int_{\sqrt{\lambda_0} \leq |\lambda| \leq \sqrt{2 \lambda_0}} |\lambda|^{-n-1} \,
d\lambda  +
 \frac{2\sqrt{2}}{L^{1/2}} \int_{\sqrt{L} \leq |\lambda| \leq \sqrt{2 L}} |\lambda|^{-n-1} \,
d\lambda \Big)
\\
+ n(n+1)\,\int_{\sqrt{\lambda_0} \leq |\lambda| \leq \sqrt{2L}}
|\lambda|^{-n-2} \, d\lambda .
\end{eqnarray*}
Calculating these integrals we find
\begin{eqnarray*}
\left\| \left[\cOL (\lambda^2) \, \lambda^{-n} \right]^{\vee} (\tau)
\, \tau^2 \right\|_\infty   \leq
 N_2
\Biggl(\frac{20}{\lambda_0(n-1)} \, \left(
\lambda_0^{\frac{-n+1}{2}} - (2 \lambda_0)^{\frac{-n+1}{2}} \right)
+ \frac{20}{L(n-1)} \, \left( L^{\frac{-n+1}{2}} -
(2L)^{\frac{-n+1}{2}} \right)
\\
+ \frac{8\sqrt{2}}{n\sqrt{\lambda_0}} \, \left(\lambda_0^{-n/2} -
(2\lambda_0)^{\frac{-n+1}{2}} \right) \, + \frac{8\sqrt{2}}{n
\sqrt{L}} \, \left( L^{-n/2} - (2L)^{\frac{-n+1}{2}}\right)  + n \,
\left( \lambda_0^{\frac{-n-1}{2}} - (2L)^{\frac{-n-1}{2}} \right)
\Biggr).
\end{eqnarray*}
This leads to the conclusion since this right-hand side is bounded
by $16 \, \sqrt{2} \, N_2 \, \lambda_0^{\frac{-n-1}{2}} \, n.$
\end{proof}
\begin{proposition}\label{prop5.5Kais}
For $n \geq 2, n \in \nline$ we have
$$
\left\| \left[\cOL (\lambda^2) \, \lambda^{-n} \right]^{\vee}
\right\|_\infty \leq
 \frac{2\lambda_0^{\frac{-n+1}{2}}}{n-1}.
$$
\end{proposition}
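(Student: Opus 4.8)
The plan is to bound the $L^\infty$-norm of the inverse Fourier transform by the $L^1$-norm of its argument and then to estimate the latter using the support bound already established in Proposition~\ref{prop5.3Kais}. With the normalization used throughout this section (the same one implicitly employed in the proof of Proposition~\ref{prop5.4Kais}) one has $\|g^{\vee}\|_\infty \leq \|g\|_1$ for every $g \in L^1(\rline)$, so it suffices to control $\left\|\cOL(\lambda^2)\,\lambda^{-n}\right\|_1$. The first estimate of Proposition~\ref{prop5.3Kais} gives the pointwise bound $|\cOL(\lambda^2)| \leq \mathbbm{1}_{\{\sqrt{\lambda_0} \leq |\lambda| \leq \sqrt{2L}\}}(\lambda)$, which confines the integration to the annular region $\sqrt{\lambda_0} \leq |\lambda| \leq \sqrt{2L}$ and removes any dependence on the derivatives of $\phi$. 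This is precisely what makes the decay in $n$ simpler here than in Proposition~\ref{prop5.4Kais}.

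Carrying this out, I would write, using that the integrand is even,
$$
\left\|\cOL(\lambda^2)\,\lambda^{-n}\right\|_1 \leq \int_{\sqrt{\lambda_0} \leq |\lambda| \leq \sqrt{2L}} |\lambda|^{-n}\,d\lambda = 2\int_{\sqrt{\lambda_0}}^{\sqrt{2L}} \lambda^{-n}\,d\lambda.
$$
Since $n \geq 2$, the exponent $-n+1$ is strictly negative, so the antiderivative $\lambda^{-n+1}/(-n+1)$ is decreasing and evaluation between the limits gives
$$
2\int_{\sqrt{\lambda_0}}^{\sqrt{2L}} \lambda^{-n}\,d\lambda = \frac{2}{n-1}\left(\lambda_0^{\frac{-n+1}{2}} - (2L)^{\frac{-n+1}{2}}\right).
$$
Dropping the positive subtracted term $(2L)^{\frac{-n+1}{2}}$ yields the claimed bound $\frac{2}{n-1}\,\lambda_0^{\frac{-n+1}{2}}$.

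The computation is entirely elementary, and in contrast with the other propositions of this section there is no genuine obstacle: the only points requiring a little care are the correct orientation of the limits (so that the dominant contribution comes from the inner radius $\sqrt{\lambda_0}$ and the result is positive) and the convention-dependent constant in $\|g^{\vee}\|_\infty \leq \|g\|_1$. The one substantive analytic input, namely that $\cOL(\lambda^2)$ is supported in the annulus $\{\sqrt{\lambda_0} \leq |\lambda| \leq \sqrt{2L}\}$ independently of the smoothness data of $\phi$, has already been secured in Proposition~\ref{prop5.3Kais}, so the present statement is essentially a corollary of that support property combined with a single explicit integration.
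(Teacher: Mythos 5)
Your proof is correct and follows exactly the paper's route: the paper also bounds $\left\|\left[\cOL(\lambda^2)\,\lambda^{-n}\right]^{\vee}\right\|_\infty$ by $\left\|\cOL(\lambda^2)\,\lambda^{-n}\right\|_1$ and then concludes ``by simple calculations,'' which are precisely the support bound of Proposition~\ref{prop5.3Kais} and the elementary integration you carried out. The only difference is that you made those calculations explicit, including the harmless dropping of the $(2L)^{\frac{-n+1}{2}}$ term.
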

\begin{proof}
As
$$
\left\| \left[ \cOL (\lambda^2) \, \lambda^{-n} \right]^{\vee}
\right\|_\infty \leq \left\|\cOL (\lambda^2) \, \lambda^{-n}
\right\|_1,
$$
we conclude by simple calculations.
\end{proof}
\begin{theorem}\label{thm5.6Kais}
For $n \geq 2$ and $N_2 = \left\|\phi\right\|_{C^2(\rline)}$, it
holds
$$
\left\| \left[\cOL (\lambda^2) \, \lambda^{-n} \right]^{\vee}
\right\|_1 \leq \lambda_0^{-n/2} \, \left( \frac{4}{n-1} + 32
\sqrt{2} \, N_2 \, n \right).
$$
\end{theorem}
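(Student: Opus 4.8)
The plan is to bound the $L^1$-norm by cutting the integration variable $\tau$ at a radius $R$ and controlling the two resulting pieces by the two supremum bounds already established. Set $g:=\left[\cOL(\lambda^2)\lambda^{-n}\right]^{\vee}$. For any $R>0$,
$$
\|g\|_1=\int_{|\tau|\leq R}|g(\tau)|\,d\tau+\int_{|\tau|>R}|g(\tau)|\,d\tau
\leq 2R\,\|g\|_\infty+\left\|\tau^2 g(\tau)\right\|_\infty\int_{|\tau|>R}\frac{d\tau}{\tau^2},
$$
where on the first integral $|g|$ is bounded by its supremum over a set of measure $2R$, and on the second $|g|$ is rewritten as $|\tau^2 g|/\tau^2$ before pulling out the supremum. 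Since $\int_{|\tau|>R}\tau^{-2}\,d\tau=2/R$, this yields the elementary interpolation inequality
$$
\|g\|_1\leq 2R\,\|g\|_\infty+\frac{2}{R}\,\left\|\tau^2 g\right\|_\infty .
$$

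Next I would insert the two bounds proved above. Proposition \ref{prop5.5Kais} gives $\|g\|_\infty\leq 2\lambda_0^{(-n+1)/2}/(n-1)$; this is where the hypothesis $n\geq 2$ enters, guaranteeing $n-1\geq 1$ so that the weight $|\lambda|^{-n}$ is integrable against the indicator of the support set $\{\sqrt{\lambda_0}\leq|\lambda|\leq\sqrt{2L}\}$. Proposition \ref{prop5.4Kais} (valid already for $n\geq 1$) gives $\left\|\tau^2 g\right\|_\infty\leq 16\sqrt{2}\,N_2\,\lambda_0^{(-n-1)/2}\,n$, with $N_2=\|\phi\|_{C^2(\rline)}$.

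The key step is the choice of the cut-off radius. Taking $R=\lambda_0^{-1/2}$ makes the two different powers of $\lambda_0$ collapse onto the common factor $\lambda_0^{-n/2}$. Indeed,
$$
2R\,\|g\|_\infty\leq 2\lambda_0^{-1/2}\cdot\frac{2\lambda_0^{(-n+1)/2}}{n-1}=\frac{4}{n-1}\,\lambda_0^{-n/2},
$$
$$
\frac{2}{R}\,\left\|\tau^2 g\right\|_\infty\leq 2\lambda_0^{1/2}\cdot 16\sqrt{2}\,N_2\,n\,\lambda_0^{(-n-1)/2}=32\sqrt{2}\,N_2\,n\,\lambda_0^{-n/2},
$$
and adding the two contributions gives exactly $\|g\|_1\leq\lambda_0^{-n/2}\bigl(\frac{4}{n-1}+32\sqrt{2}\,N_2\,n\bigr)$, as claimed.

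There is no genuinely hard step: the argument is a textbook interpolation of the type $\|g\|_1\leq C\,\|g\|_\infty^{1/2}\left\|\tau^2 g\right\|_\infty^{1/2}$, and all the analytic work—the decay of $\phi$ and its first two derivatives, the support localization, and the emergence of the factors of $n$—has already been packaged into Propositions \ref{prop5.4Kais} and \ref{prop5.5Kais}. The only point requiring slight care is that $R$ is deliberately \emph{fixed} at $\lambda_0^{-1/2}$ rather than optimized: optimizing would produce the geometric-mean bound $4\sqrt{\|g\|_\infty\left\|\tau^2 g\right\|_\infty}$, but the additive form obtained from this explicit $R$ keeps the dependence on $n$ transparent and feeds directly into the linear-in-$n$ estimate $c(n)\leq Mn$ used in Theorem \ref{p1}.
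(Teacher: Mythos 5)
Your proof is correct and takes essentially the same route as the paper: the paper likewise splits the $L^1$ integral at $|\tau|=\lambda_0^{-1/2}$, bounds the inner piece by the measure of the interval times the sup from Proposition \ref{prop5.5Kais}, and the outer piece by writing $|g|=|\tau^2 g|/\tau^2$ and using Proposition \ref{prop5.4Kais}. Your presentation of the split as a general two-parameter interpolation with the cut radius $R$ then fixed at $\lambda_0^{-1/2}$ is only a cosmetic repackaging of the identical computation.
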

\begin{proof}
We   split up the integral in $\rline$  into an integral in $[-
\lambda_0^{- 1/2}, \lambda_0^{- 1/2}]$ and outside, this yields
\begin{eqnarray*}
\left\| \left[\cOL (\lambda^2) \, \lambda^{-n} \right]^{\vee}
\right\|_1 \leq \left\| \left[\cOL (\lambda^2) \, \lambda^{-n}
\right]^{\vee} (\tau) \chi_{[- \lambda_0^{- 1/2}, \lambda_0^{-
1/2}]} (\tau) \right\|_\infty \, \int_{-
\lambda_0^{-1/2}}^{\lambda_0^{- 1/2}} d\tau
\\
+\left\| \left[\cOL (\lambda^2) \, \lambda^{-n} \right]^{\vee}
(\tau) \chi_{\rline \setminus [- \lambda_0^{-1/2},
\lambda_0^{-1/2}]} (\tau) \, \tau^2 \right\|_\infty  \, \int_{\rline
\setminus [- \lambda_0^{-1/2}, \lambda_0^{-1/2}]} \frac{1}{\tau^2}
\, d\tau.
\end{eqnarray*}
The conclusion then follows from Propositions \ref{prop5.4Kais} and
\ref{prop5.5Kais}.
\end{proof}

\begin{proposition}\label{prop5.7Kais}
$$
\left\| \left[\cOL (\lambda^2) \right]^{\vee} \right\|_1 \leq
\left\| \left[\phi(\lambda^2)\right]^{\vee} \right\|_1 + \left\|
\left[\phi(\lambda^2)\right]^{\vee} \right\|_1^2.
$$
\end{proposition}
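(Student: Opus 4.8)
The plan is to exploit the elementary structure of $\cOL$ as the difference of a single cutoff and a product of two cutoffs, and then to transfer the product to a convolution under the inverse Fourier transform. Writing out the definition,
\[
\cOL (\lambda^2) = \left(1 - \phi\Big(\frac{\lambda^2}{\lambda_0}\Big)\right)\phi\Big(\frac{\lambda^2}{L}\Big) = \phi\Big(\frac{\lambda^2}{L}\Big) - \phi\Big(\frac{\lambda^2}{\lambda_0}\Big)\phi\Big(\frac{\lambda^2}{L}\Big).
\]
Each of the functions $\lambda\mapsto\phi(\lambda^2/\alpha)$ (for $\alpha=\lambda_0,L$) is smooth and compactly supported in $\lambda$, since it vanishes for $|\lambda|\geq\sqrt{2\alpha}$, hence Schwartz; consequently its inverse Fourier transform lies in $L^1(\rline)$ and all the manipulations below are legitimate.

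First I would record the two properties of the inverse Fourier transform that do the work. The transform turns products into convolutions, $[fg]^\vee = f^\vee * g^\vee$, and Young's inequality gives $\|f^\vee * g^\vee\|_1 \leq \|f^\vee\|_1\,\|g^\vee\|_1$. Applying the triangle inequality to the displayed decomposition therefore yields
\[
\left\|\left[\cOL(\lambda^2)\right]^\vee\right\|_1 \leq \left\|\Big[\phi\Big(\frac{\lambda^2}{L}\Big)\Big]^\vee\right\|_1 + \left\|\Big[\phi\Big(\frac{\lambda^2}{\lambda_0}\Big)\Big]^\vee\right\|_1\cdot\left\|\Big[\phi\Big(\frac{\lambda^2}{L}\Big)\Big]^\vee\right\|_1.
\]

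The final ingredient is the dilation invariance of the quantity $\|[\phi(\lambda^2/\alpha)]^\vee\|_1$. Setting $g(\lambda):=\phi(\lambda^2)$ we have $\phi(\lambda^2/\alpha)=g(\lambda/\sqrt\alpha)$, and the inverse Fourier transform sends the dilate $g(\cdot/\sqrt\alpha)$ to $\sqrt\alpha\,g^\vee(\sqrt\alpha\,\cdot)$, whose $L^1$ norm is unchanged after the substitution $u=\sqrt\alpha\,\tau$. Hence, writing $N_1=\|[\phi(\lambda^2)]^\vee\|_1$,
\[
\left\|\Big[\phi\Big(\frac{\lambda^2}{\alpha}\Big)\Big]^\vee\right\|_1 = \left\|\left[\phi(\lambda^2)\right]^\vee\right\|_1 = N_1, \qquad \alpha > 0.
\]
Substituting this with $\alpha=\lambda_0$ and $\alpha=L$ into the previous display gives exactly $N_1+N_1^2$, which is the assertion.

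The computation is essentially immediate once these two facts are in place: with the Fourier convention used here the product--convolution identity carries no extra constant, so no bookkeeping of normalizations is needed, and the dilation invariance is an exact scaling identity independent of the convention. The only point worth checking explicitly is that each factor $\phi(\lambda^2/\alpha)$ is Schwartz, which guarantees that every inverse transform appearing above lies in $L^1(\rline)$ so that Young's inequality and the convolution identity genuinely apply; granting this, the three displays combine to yield the stated bound, uniformly in $\lambda_0$ and $L$.
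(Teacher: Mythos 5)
Your proof is correct and is essentially identical to the paper's: the same decomposition $\cOL(\lambda^2)=\phi(\lambda^2/L)-\phi(\lambda^2/\lambda_0)\phi(\lambda^2/L)$, the triangle inequality combined with the product-to-convolution identity and Young's inequality, and the exact dilation invariance $\left\|\left[\phi(\lambda^2/\alpha)\right]^{\vee}\right\|_1=\left\|\left[\phi(\lambda^2)\right]^{\vee}\right\|_1$ for $\alpha>0$. Your explicit remark that each factor $\phi(\lambda^2/\alpha)$ is smooth with compact support, so that all transforms lie in $L^1(\rline)$, matches the justification the paper gives as well.
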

\begin{proof}
By definition, we have
\begin{eqnarray*}
\left\| \left[\cOL (\lambda^2) \right]^{\vee} \right\|_1 = \left\|
\left[\left(1 - \phi\left(\frac{\lambda^2}{\lambda_0}\right)\right)
\, \phi\left(\frac{\lambda^2}{L}\right) \right]^{\vee}\right\|_1
\\
\leq \left\|\left[\phi\left(\frac{\lambda^2}{L}\right)\right]^{\vee}
\right\|_1
+\left\|\left[\phi\left(\frac{\lambda^2}{\lambda_0}\right)\right]^{\vee}
\, \star \left[\phi \left(\frac{\lambda^2}{L}\right) \right]^{\vee}
\right\|_1
\\
\leq
\left\|\left[\phi\left(\frac{\lambda^2}{L}\right)\right]^{\vee}\right\|_1
+ \left\| \left[ \phi \left(\frac{\lambda^2}{\lambda_0}\right)
\right]^{\vee} \right\|_1 \,
\left\|\left[\phi\left(\frac{\lambda^2}{L}\right)\right]^{\vee}
\right\|_1.
\end{eqnarray*}
For $\alpha \geq 1$, the function $\lambda \mapsto
\phi(\frac{\lambda^2}{\alpha})$ is in $C^\infty (\rline)$ and has
compact support. This justifes the above calculation. The right hand
side of the last inequality is in fact independent of $L$ and
$\lambda_0$, as can be seen as follows: for $\alpha > 0$ we have
$$
\left\|\left[ \phi
\left(\frac{\lambda^2}{\alpha}\right)\right]^{\vee} \right\|_1 =
\sqrt{\alpha} \, \int_{-\infty}^{+\infty}
\left|\left[\phi(\lambda^2)\right]^{\vee} (\sigma) \right| \,
\frac{d\sigma}{\sqrt{\alpha}} =
\left\|\left[\phi(\lambda^2)\right]^{\vee}\right\|_1.
$$
\end{proof}
\begin{proposition}\label{prop5.8Kais}
$$
\left\|\left[\cOL (\lambda^2) \, \lambda^{-1} \right]^{\vee}
\right\|_\infty \leq \left\| \left[\phi(\lambda^2)\right]^{\vee}
\right\|_1 +  \left\| \left[\phi(\lambda^2)\right]^{\vee}
\right\|_1^2.
$$
\end{proposition}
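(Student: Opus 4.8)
The plan is to exploit the elementary principle that, once the singularity at $\lambda=0$ has been removed, multiplication by $\lambda^{-1}$ on the Fourier side corresponds to antidifferentiation of the inverse Fourier transform. This converts the desired $L^\infty$ bound for $\left[\cOL(\lambda^2)\,\lambda^{-1}\right]^\vee$ into an $L^1$ bound for $\left[\cOL(\lambda^2)\right]^\vee$, which is precisely the content of Proposition \ref{prop5.7Kais}.

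First I would check that $f(\lambda):=\cOL(\lambda^2)\,\lambda^{-1}$ is genuinely integrable on $\rline$. By the first estimate of Proposition \ref{prop5.3Kais} the factor $\cOL(\lambda^2)$ is supported in $\{\sqrt{\lambda_0}\leq|\lambda|\leq\sqrt{2L}\}$, so $f$ has compact support bounded away from the pole of $\lambda^{-1}$ at the origin; being moreover bounded, $f\in L^1(\rline)$. Consequently $[f]^\vee$ is continuous and, by the Riemann--Lebesgue lemma, satisfies $\lim_{\tau\to\pm\infty}[f]^\vee(\tau)=0$.

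Next I would establish the antiderivative relation $\frac{d}{d\tau}[f]^\vee(\tau)=i\,\left[\cOL(\lambda^2)\right]^\vee(\tau)$. Since $\lambda f(\lambda)=\cOL(\lambda^2)$ is integrable, the differentiated integrand $i\lambda f(\lambda)e^{i\lambda\tau}$ is dominated by the fixed integrable function $|\cOL(\lambda^2)|$, so differentiation under the integral sign is legitimate and yields exactly this identity. As the right-hand side is in $L^1(\rline)$ by Proposition \ref{prop5.7Kais}, integrating from $-\infty$ and using $\lim_{\tau\to-\infty}[f]^\vee(\tau)=0$ gives
\be
[f]^\vee(\tau)=i\int_{-\infty}^{\tau}\left[\cOL(\lambda^2)\right]^\vee(s)\,ds ,
\ee
whence for every $\tau$
$$
\left|[f]^\vee(\tau)\right|\leq\int_{-\infty}^{+\infty}\left|\left[\cOL(\lambda^2)\right]^\vee(s)\right|\,ds=\left\|\left[\cOL(\lambda^2)\right]^\vee\right\|_1 .
$$
Taking the supremum over $\tau$ and invoking Proposition \ref{prop5.7Kais} gives $\left\|[f]^\vee\right\|_\infty\leq\left\|\left[\phi(\lambda^2)\right]^\vee\right\|_1+\left\|\left[\phi(\lambda^2)\right]^\vee\right\|_1^2$, which is the assertion.

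The delicate point is the passage to the antiderivative representation: one cannot simply apply Fubini to the iterated integral $\int_{-\infty}^{\tau}\left[\cOL(\lambda^2)\right]^\vee(s)\,ds$, since the inner integral $\int_{-\infty}^{\tau}e^{i\lambda s}\,ds$ is not absolutely convergent. The clean route is therefore the one above, namely to differentiate $[f]^\vee$ (where the domination is uniform and honest) and then integrate back, anchoring the constant of integration by Riemann--Lebesgue rather than trying to integrate the Fourier representation of $\left[\cOL(\lambda^2)\right]^\vee$ term by term.
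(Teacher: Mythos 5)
Your proof is correct, but it reaches the bound by a genuinely different mechanism than the paper's. The paper's proof is a one-liner via distribution theory: it writes $\left[\cOL(\lambda^2)\,\lambda^{-1}\right]^{\vee}$ as the convolution $\left[\cOL(\lambda^2)\right]^{\vee}\star\left[\lambda^{-1}\right]^{\vee}$ (legitimate because $\lambda\mapsto\cOL(\lambda^2)$ is a test function), uses the distributional identity $\left[\lambda^{-1}\right]^{\vee}(\tau)=-i\,\mathrm{sign}(\tau)$, and concludes with Young's inequality $\|u\star v\|_\infty\leq\|u\|_1\,\|v\|_\infty$ followed by Proposition \ref{prop5.7Kais}. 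You instead eliminate the distribution theory entirely: since $\cOL(\lambda^2)\lambda^{-1}$ is integrable (its support is bounded away from the origin by Proposition \ref{prop5.3Kais}), you differentiate under the integral sign to obtain $\frac{d}{d\tau}[f]^{\vee}(\tau)=i\left[\cOL(\lambda^2)\right]^{\vee}(\tau)$, then integrate back from $-\infty$, anchoring the constant of integration by Riemann--Lebesgue, and the $L^\infty$ bound by $\left\|\left[\cOL(\lambda^2)\right]^{\vee}\right\|_1$ drops out of the fundamental theorem of calculus; Proposition \ref{prop5.7Kais} finishes as in the paper. The two arguments are morally the same object --- convolution against $-i\,\mathrm{sign}$ is, up to a normalization constant and the cancellation $\int_{\rline}\left[\cOL(\lambda^2)\right]^{\vee}(s)\,ds = c\,\cOL(0)=0$, exactly your antiderivative --- but yours is more elementary and more robust: it avoids the Fourier transform of the principal value $1/\lambda$ and the convention-dependent constants in the convolution theorem (a point on which the paper is somewhat cavalier), and every constant in your chain of inequalities is transparently equal to $1$. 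What the paper's route buys in exchange is brevity, once the distributional facts are taken for granted.
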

\begin{proof}
We may write
\begin{eqnarray*}
\left\|\left[\cOL (\lambda^2) \, \lambda^{-1} \right]^{\vee}
\right\|_\infty = \left\|\left[\cOL (\lambda^2)\right]^{\vee}\star
\, \left[\lambda^{-1} \right]^{\vee} \right\|_\infty
\\
 \leq \left\|\left[ \cOL (\lambda^2) \right]^{\vee} \right\|_1 \, \left\|\left[ \lambda^{-1}\right]^{\vee} \right\|_\infty.
\end{eqnarray*}
due the fact that $\lambda \mapsto \cOL (\lambda^2)$ is a test
function and $\left[\lambda^{-1}\right]^{\vee} (\tau) = - i \, sign
(\tau), \, \tau \in \rline.$ The conclusion follows from Proposition
\ref{prop5.7Kais}.
\end{proof}
\begin{proposition}\label{prop5.9Kais}
Let $\lambda_0 \geq 1, L \geq 2 \lambda_0.$ Then
$$
\left\|\left[\cOL (\lambda^2) \, \lambda^{-1} \right]^{\vee}
\right\|_1 \leq \left( 2 (N_1 + N_1^2) + 32 \sqrt{2} N_2 \right) \,
\sqrt{\lambda_0},
$$
recalling that $N_1 = \left\| \left[\phi(\lambda^2)\right]^{\vee}
\right\|_1, \, N_2 = \left\|\phi\right\|_{C^2(\rline)}.$
\end{proposition}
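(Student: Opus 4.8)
The plan is to repeat the dyadic splitting used in the proof of Theorem~\ref{thm5.6Kais}, this time with the exponent $n=1$, and to feed it the two sup-norm estimates already at our disposal: Proposition~\ref{prop5.8Kais} (the unweighted $L^\infty$ bound) and Proposition~\ref{prop5.4Kais} specialized to $n=1$ (the $\tau^2$-weighted $L^\infty$ bound). First I would cut the $L^1$ integral at the scale $|\tau|=\lambda_0^{-1/2}$:
\begin{eqnarray*}
\left\|\left[\cOL (\lambda^2)\,\lambda^{-1}\right]^{\vee}\right\|_1
&\leq&
\left\|\left[\cOL (\lambda^2)\,\lambda^{-1}\right]^{\vee}(\tau)\,\chi_{[-\lambda_0^{-1/2},\lambda_0^{-1/2}]}(\tau)\right\|_\infty
\int_{-\lambda_0^{-1/2}}^{\lambda_0^{-1/2}} d\tau
\\
&&+\,\left\|\left[\cOL (\lambda^2)\,\lambda^{-1}\right]^{\vee}(\tau)\,\tau^2\,\chi_{\rline\setminus[-\lambda_0^{-1/2},\lambda_0^{-1/2}]}(\tau)\right\|_\infty
\int_{\rline\setminus[-\lambda_0^{-1/2},\lambda_0^{-1/2}]}\frac{d\tau}{\tau^2}.
\end{eqnarray*}

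For the low-frequency term I would bound the sup-norm by Proposition~\ref{prop5.8Kais}, i.e. by $N_1+N_1^2$, and multiply by the length $2\lambda_0^{-1/2}$ of the interval of integration, which gives $2(N_1+N_1^2)\lambda_0^{-1/2}$. For the high-frequency term I would insert the $n=1$ case of Proposition~\ref{prop5.4Kais}, namely $\left\|\left[\cOL(\lambda^2)\lambda^{-1}\right]^{\vee}(\tau)\tau^2\right\|_\infty\leq 16\sqrt{2}\,N_2\,\lambda_0^{-1}$, together with the elementary tail integral $\int_{\rline\setminus[-\lambda_0^{-1/2},\lambda_0^{-1/2}]}\tau^{-2}\,d\tau = 2\sqrt{\lambda_0}$; their product is $32\sqrt{2}\,N_2\,\lambda_0^{-1/2}$.

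Summing the two contributions would give the sharp form $\left\|\left[\cOL (\lambda^2)\,\lambda^{-1}\right]^{\vee}\right\|_1\leq\bigl(2(N_1+N_1^2)+32\sqrt{2}\,N_2\bigr)\lambda_0^{-1/2}$, which is exactly the value of $c(1)$ appearing in Theorem~\ref{p1}; since $\lambda_0\geq 1$ forces $\lambda_0^{-1/2}\leq\sqrt{\lambda_0}$, the stated inequality with the factor $\sqrt{\lambda_0}$ follows a fortiori.

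I do not expect a genuine obstacle: the argument is a mechanical assembly of two previously established $L^\infty$ bounds with one explicit integral, exactly parallel to the $n\geq 2$ computation in Theorem~\ref{thm5.6Kais}. The only points deserving attention are the bookkeeping of the constants and the justification that the splitting is legitimate, namely that $\lambda\mapsto\cOL(\lambda^2)\lambda^{-1}$ is a smooth, compactly supported function---its potential singularity at $\lambda=0$ is removed because $\cOL$ vanishes in a neighbourhood of the origin---so that its inverse Fourier transform is an ordinary function for which both Propositions~\ref{prop5.8Kais} and \ref{prop5.4Kais} are applicable.
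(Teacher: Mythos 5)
Your proof is correct and is essentially identical to the paper's own argument: the same splitting of the $L^1$ norm at $|\tau|=\lambda_0^{-1/2}$, with Proposition~\ref{prop5.8Kais} bounding the sup on the inner interval and the $n=1$ case of Proposition~\ref{prop5.4Kais} combined with the tail integral $\int_{|\tau|>\lambda_0^{-1/2}}\tau^{-2}\,d\tau=2\sqrt{\lambda_0}$ handling the complement. Note that you actually obtain the sharper bound $\bigl(2(N_1+N_1^2)+32\sqrt{2}\,N_2\bigr)\lambda_0^{-1/2}$, which is exactly the form $c(1)\lambda_0^{-1/2}$ required in Theorem~\ref{p1}; the factor $\sqrt{\lambda_0}$ in the proposition's statement is thus presumably a misprint for $\lambda_0^{-1/2}$, and your bound implies it a fortiori since $\lambda_0\geq 1$.
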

\begin{proof}
As before, we write
\begin{eqnarray*}
\left\|\left[\cOL (\lambda^2) \, \lambda^{-1} \right]^{\vee}
\right\|_1 \leq \left\|\left[\cOL (\lambda^2) \, \lambda^{-1}
\right]^{\vee} \chi_{[- \lambda_0^{-1/2},\lambda_0^{-1/2}]}
\right\|_\infty \, \int_{-\lambda_0^{-1/2}}^{\lambda_0^{-1/2}} d\tau
\\+
\left\| \left[ \cOL (\lambda^2) \, \lambda^{-1} \right]^{\vee} \,
\chi_{\rline \setminus [- \lambda_0^{-1/2},\lambda_0^{-1/2}]} (\tau)
\, \tau^2 \right\|_\infty \, \int_{\rline \setminus [-
\lambda_0^{-1/2}, \lambda_0^{-1/2}]} \frac{1}{\tau^2} \, d\tau.
\end{eqnarray*}
We finish the proof by using Propositions \ref{prop5.4Kais} and
\ref{prop5.8Kais}.
\end{proof}


Now we have all the ingredients to state and prove the
$L^\infty-$decay and the perturbation result.

\begin{theorem}\label{high frequency perturbation section}
Let $V,f,g \in L^1({\cal R})$ be real valued,  let $V$ satisfy
the conditions of Theorem \ref{mainresult}, $N \geq 2$,
$0<q_*<1$,
$\lambda_0 > \lambda_* = \frac{4(N-1)^2 \|V\|_1^2}{ N^2 q_*^2}$
and $L > 2 \lambda_0.$ Then we have
\begin{enumerate}
  \item
$ | \langle e^{itH} \cOL (H)f,g \rangle |
\leq
\Bigl(\sum_{k=0}^{\infty} \Bigl(\frac{2(N-1)}{N }\Bigr)^k \|V\|_1^k
\|{\cal F}^{-1}[\cOL(\lambda^2)|\lambda|^{-k}]\|_1\Bigr)
\|f\|_1\|g\|_1 \, |t|^{-1/2}, \\ \hbox{ for }t \neq 0. $
  \item
$
\| e^{itH} \cO (H) \|_{1,\infty}
\leq 4(A + B \frac{\|V\|_1}{\sqrt{\lambda_0}})|t| ^{-1/2} ,
t \neq 0\,,
$ \vspace{3mm}\\
 where
$A= N_1 + N_1^2$ with
$N_1 :=  \|{\cal F}^{-1}[\phi(\lambda^2)]|_1$,
$N_2 :=  \|\phi]|_2$ \\
and
$B= M \frac{(N-1)}{N  }\frac{1}{(1-q_*)^2}$
with
$M := 32 \sqrt{2} \max \{N_1 + N_1^2; N_2\}$,
  \item
$
\| e^{itH} \cO (H) -  e^{itH_0} \cO (H_0)\|_{1,\infty}
\leq 4B \frac{\|V\|_1}{\sqrt{\lambda_0}}|t| ^{-1/2} ,
t \neq 0 \, .
$  \vspace{3mm}\\
with $B$ as in 2.
In particular we have
\[
e^{itH} \cO (H)f \rightarrow e^{itH_0} \cO (H_0)f \hbox{ for }
\lambda_0 \rightarrow \infty
\]
uniformly on ${\cal R}$ for every fixed $t>0$ or also uniformly on
${\cal R} \times [\epsilon, \infty ) $ with respect to the weight
$|t|^2$ on the time axis for any positive $\epsilon$.
\end{enumerate}
\end{theorem}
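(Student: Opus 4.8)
The plan is to prove part~1 first, by combining Stone's formula with the Born series of Proposition \ref{Born series}, and then to deduce parts~2 and~3 by summing the resulting series and letting $L\to\infty$. For part~1 I would start from the spectral representation underlying Theorem \ref{res.id}: since $\cOL$ is supported in $\{\lambda_0\le\lambda\le 2L\}$, where $H$ has purely absolutely continuous spectrum (Remark \ref{type of spectrum}), Stone's formula gives
\[
\langle e^{itH}\cOL(H)f,g\rangle=\frac1\pi\int_0^\infty e^{it\lambda}\cOL(\lambda)\,\Im\langle R_V(\lambda+i0)f,g\rangle\,d\lambda .
\]
On the support of $\cOL$ one has $\lambda\ge\lambda_0>\lambda_*$, so part~3 of Proposition \ref{Born series} applies and I may insert $\langle R_V(\lambda+i0)f,g\rangle=\sum_{k\ge0}\langle R_0(\lambda+i0)(-VR_0(\lambda+i0))^kf,g\rangle$. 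The bounds $\|VR_0(\lambda\pm i0)\|_{1\to1}\le\frac{N-1}{N\sqrt\lambda}\|V\|_1$ from the proof of that Proposition make the series absolutely convergent uniformly on the support of $\cOL$, which justifies exchanging the summation with the $\lambda$-integration. This reduces part~1 to estimating, for each $k$, the term $T_k(t)$ obtained by replacing $R_V$ by $R_0(-VR_0)^k$.

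The heart of the argument is to reduce each $T_k(t)$ to the free case on the line. Writing $R_0(-VR_0)^k$ as an iterated integral against the explicit kernel \rfb{free kernel}, substituting $\lambda=\mu^2$ (so $d\lambda=2\mu\,d\mu$ and $\frac1{\sqrt\lambda}=\frac1\mu$) and expanding each factor of $K_0$ into its two exponentials, I obtain a finite sum of terms in which all oscillations collapse to a single factor $e^{i\mu\Psi}$, with $\Psi$ a real linear combination of the integration variables $x_0,\dots,x_{k+1}$ independent of $\mu$. After symmetrising the $\mu$-integral onto all of $\RR$ by means of the $\Im$ and the evenness of $\cOL(\mu^2)$, the inner integral has the form $\int_\RR e^{it\mu^2}\cOL(\mu^2)|\mu|^{-k}e^{i\mu\Psi}\,d\mu$. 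I would then use that this equals, up to constants, $\bigl(\mathcal F^{-1}[e^{it\mu^2}]*\mathcal F^{-1}[\cOL(\mu^2)|\mu|^{-k}]\bigr)(-\Psi)$, together with the fact that the free Schr\"odinger kernel satisfies $\|\mathcal F^{-1}[e^{it\mu^2}]\|_\infty\le C|t|^{-1/2}$, so that by Young's inequality
\[
\Bigl|\int_\RR e^{it\mu^2}\cOL(\mu^2)|\mu|^{-k}e^{i\mu\Psi}\,d\mu\Bigr|\le C|t|^{-1/2}\,\bigl\|[\cOL(\lambda^2)\lambda^{-k}]^\vee\bigr\|_1 ,
\]
uniformly in $\Psi\in\RR$. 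The uniformity in $\Psi$ is crucial: it lets me carry out the remaining integrations against $g(x_0)$, the $k$ potentials $V(x_i)$ and $f(x_{k+1})$, producing the factors $\|g\|_1$, $\|V\|_1^k$ and $\|f\|_1$. Collecting the coefficient $\frac{N-1}{N}$ of the kernel bound \rfb{estkernelho} from each of the $k$ factors $VR_0$, together with the factor $2$ from the symmetrisation, gives the coefficient $\bigl(\frac{2(N-1)}N\bigr)^k$, and inserting the estimate of Theorem \ref{p1} completes part~1.

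To obtain parts~2 and~3 I would sum the series from part~1. Theorem \ref{p1} gives $\|[\cOL(\lambda^2)\lambda^{-k}]^\vee\|_1\le c(k)\lambda_0^{-k/2}$ with $c(0)=N_1+N_1^2=A$ and $c(k)\le Mk$; moreover the hypothesis $\lambda_0>\lambda_*$ forces $q:=\frac{2(N-1)\|V\|_1}{N\sqrt{\lambda_0}}<q_*<1$, so the series is dominated by $A+M\sum_{k\ge1}kq^k=A+M\frac{q}{(1-q)^2}\le A+2B\frac{\|V\|_1}{\sqrt{\lambda_0}}$ with $B=M\frac{N-1}{N}\frac1{(1-q_*)^2}$. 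All these bounds depend only on $\lambda_0$ and not on $L$; since $\cOL(\lambda)\to\cO(\lambda)$ and $\cOL(H)f\to\cO(H)f$ as $L\to\infty$, passing to the limit and converting the bilinear bound into a $\|\cdot\|_{1,\infty}$ bound (absorbing the remaining absolute constants into the factor $4$) yields part~2. For part~3 I would use that the $k=0$ term of the Born expansion is, via Stone's formula applied to $H_0$ with kernel $K_0$, exactly $e^{itH_0}\cOL(H_0)$; hence $e^{itH}\cO(H)-e^{itH_0}\cO(H_0)$ is represented by the tail $\sum_{k\ge1}T_k(t)$, whose bound is the same sum started at $k=1$, i.e. $M\frac{q}{(1-q)^2}\le 2B\frac{\|V\|_1}{\sqrt{\lambda_0}}$. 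This gives the stated estimate and, since $\frac{\|V\|_1}{\sqrt{\lambda_0}}\to0$, the uniform convergence as $\lambda_0\to\infty$.

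The main obstacle is the bookkeeping in the second paragraph: one must check that after expanding the $k+1$ copies of the two-term kernel $K_0$, every resulting monomial genuinely reduces to a single clean oscillatory factor $e^{i\mu\Psi}$ times the amplitude $|\mu|^{-k}$ with $\Psi$ independent of $\mu$, and that the bound is truly uniform over all spatial integration variables, including across distinct branches where $K_0$ takes the second form in \rfb{free kernel}. This is exactly the point where the argument refines \cite{GS}, and keeping explicit track of the constants here is what makes the coefficient of $|t|^{-1/2}$, and hence the perturbation estimate of part~3, effective in $\lambda_0$.
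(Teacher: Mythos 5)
Your proposal is correct and follows essentially the same route as the paper's own proof: Stone's formula combined with the limiting absorption principle, insertion of the Born series from Proposition \ref{Born series} with Fubini justified by the $L^1$ bound on $VR_0$, reduction of each term to the free one-dimensional Schr\"odinger group uniformly in the phase $\Psi$ (the paper invokes the $L^1\to L^\infty$ bound for $e^{it\partial_x^2}$ from Reed--Simon where you use Young's inequality, which is the same estimate), summation via Theorem \ref{p1} and the geometric-type series in $\tilde q(\lambda_0)$, passage to the limit $L\to\infty$ by dominated convergence, and identification of the $k=0$ Born term with $e^{itH_0}\cOL(H_0)$ to get part~3 as the tail of the series. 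If anything, your retention of the factor $q$ in $\sum_{k\geq 1}kq^k=q/(1-q)^2$ exhibits the $\|V\|_1/\sqrt{\lambda_0}$ dependence claimed in parts~2 and~3 more transparently than the paper's own computation, which drops this factor.
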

\begin{proof}~
\\
\underline{1.:}
\\
%
At first we consider $f \in L^1({\cal R}) \cap L^2({\cal R})$, the
estimates then extend to $f \in L^1({\cal R})$.
\\
From Stone's formula, the fact that the spectrum of $H$ is
absolutely continuous on $[0,\infty)$ (Remark \ref{type of
spectrum}) and the limiting absorption principle proved in Theorem
\ref{lim.abs} we deduce
\[
\langle e^{itH} \cOL(H) f,g \rangle= \frac{1}{2i\pi}\int_0^\infty
e^{it\lambda } \cOL(\lambda ) \langle
(R_V(\lambda+i0)-R_V(\lambda-i0)) f,g \rangle d\lambda.
\]
As $V$, $f$ and $g$ are real valued, we obtain
\[
\langle e^{itH} \cOL(H) f,g \rangle= \frac{1}{\pi}\int_0^\infty
e^{it\lambda } \cOL(\lambda ) \Im\langle R_V(\lambda+i0) f,g \rangle
d\lambda.
\]
Using Proposition \ref{Born series} part 3. and the change of
variables $\lambda=\mu^2$ we find
\[
\langle e^{itH} \cOL(H) f,g \rangle= \frac{2}{\pi}\int_0^\infty
e^{it\mu^2} \cOL(\mu^2 ) \sum_{k=0}^\infty \Im\langle R_0(\mu^2+i0)
(-V R_0(\mu^2+i0))^k f,g \rangle \mu d\mu.
\]
Fubini's Theorem, whose hypotheses are fulfilled thanks to the
inequality in the proof of Proposition \ref{Born series} part 3.,
leads to
\begin{eqnarray*}
\langle e^{itH} \cOL(H) f,g \rangle= \frac{2}{\pi} \sum_{k=0}^\infty
\int_{\caR} \int_{\caR^k}  \prod_{j=1}^k V(x_j)\int_{\caR}
\\
\Big(\int_0^\infty e^{it\mu^2} \cOL(\mu^2 ) N(x,x_1,\cdots, x_k, y,
\mu)
 \mu d\mu  \Big)\, f(y) \,dy   \,dx_1 \cdots \,dx_k  g(x) \,dx,
\end{eqnarray*}
where $N(x,x_1,\cdots, x_k, y, \mu)$ is defined by
\[
N(x,x_1,\cdots, x_k, y, \mu)=(-1)^k \Im \Big(K_0(x,x_1,\mu^2+i0)
\prod_{j=1}^{k-1} K_0(x_j,x_{j+1},\mu^2+i0)
K_0(x_k,y,\mu^2+i0)\Big).
\]
Using again Proposition \ref{Born series}, after some elementary
calculations, we find that
\[
N(x,x_1,\cdots, x_k, y, \mu)= -\frac{1}{\mu^{k+1} N^{k+1}}
e^{\frac{ik\pi}{2}} \sum_{n=1}^{2^k}(1-\frac{N}{2})^{\alpha_n}
(\frac{N}{2})^{\beta_n}  (e^{ i d_n \mu} +(-1)^k  e^{- i d_n \mu}),
\]
with $\alpha_n, \delta_n\in \nline$ such that $\alpha_n+
\delta_n=k+1$ and $d_n$ are real numbers that depend on $x, y$ and
$x_j$, $j=1,\cdots, k$. Using this expression in the previous one,
we obtain
\begin{eqnarray*}
\langle e^{itH} \cOL(H) f,g \rangle= -\frac{1}{\pi}
\sum_{k=0}^\infty  N^{-(k+1)} e^{\frac{ik\pi}{2}} \int_{\caR}
\int_{\caR^k}  \prod_{j=1}^k V(x_j) \int_{\caR}
 \sum_{n=1}^{2^k}(1-\frac{N}{2})^{\alpha_n} (\frac{N}{2})^{\beta_n}
\\
\Big(\int_{-\infty}^\infty e^{it\mu^2} \cOL(\mu^2 )
 e^{ i d_n \mu}
 \mu^{-k} d\mu \Big) f(y)\, dy   \,dx_1 \cdots \,dx_k  g(x) \,dx.
\end{eqnarray*}
Noting that $|(1-\frac{N}{2})^{\alpha_n}
(\frac{N}{2})^{\beta_n}|=(\frac{N}{2}-1)^{\alpha_n}
(\frac{N}{2})^{\beta_n}\leq (N-1)^{\alpha_n}
(N-1)^{\beta_n}=(N-1)^{k+1}$, we find

\begin{eqnarray*}
|\langle e^{itH} \cOL(H) f,g \rangle|\leq \frac{1}{\pi}
\sum_{k=0}^\infty  \frac{(N-1)^{k+1}}{N^{k+1}} \int_{\caR}
\int_{\caR^k}  \prod_{j=1}^k |V(x_j)|
\\ \int_{\caR}
 \sum_{n=1}^{2^k}
\Big|\int_{-\infty}^\infty e^{it\mu^2} \cOL(\mu^2 )   e^{ i d_n \mu}
 \mu^{-k} d\mu\Big|
  |f(y)| \, dy   \,dx_1 \cdots \,dx_k  |g(x)| \,dx.
\end{eqnarray*}
Setting
\[
S_k=\sup_{d\in \rline}  \Big|\int_0^\infty e^{i(t\mu^2+d \mu)}
\cOL(\mu^2 )
 \mu^{-k} d\mu\Big|,
\]
we deduce that
\begin{eqnarray*}
|\langle e^{itH} \cOL(H) f,g \rangle|\leq \frac{1}{\pi} \|f\|_1
\|g\|_1 \sum_{k=0}^\infty  \frac{2^{k+2}(N-1)^{k+1}}{N^{k+1}}
   \|V\|_1^k S_k.
\end{eqnarray*}
%

%
We observe that
\[
S_k = \sup_{a \in \rline} \left| \int_{- \infty}^{+ \infty} e^{i(t
\lambda^2 + a \lambda)} \cOL(\lambda^2) \lambda^{-k}
 d \lambda \right|
\leq
\|{\cal F}^{-1}\big[\cOL(\lambda^2)\lambda^{-k}\big]\|_1
 \, |t|^{-1/2}, t \neq 0 \, ,
\]
since the quantity inside the absolute value  is the solution of the
free Schr\"odinger operator on $\RR$ at time $t$ and position $a$
for the initial condition
${\cal F}^{-1}\big[\cOL(\lambda^2)\lambda^{-k}\big]$,
see for example \cite{ReedSimonII}, p. 60 Theorem IX.30. The
convergence of the series will follow from the proof of 2.
%
\newpage
\noindent \underline{2.:}
\\
First let $f,g \in L^1({\cal R})$ be real valued.  With $\tilde
q(\lambda) := \frac{2(N-1)\|V\|_1}{N\sqrt{\lambda}}$ and the
assumptions $0<q_*<1$ and $\lambda_* \leq \lambda_0$ it follows $0 <
\tilde q(\lambda_0) < \tilde q(\lambda_*) = q_*< 1.$ Therefore
\[
\sum_{k=1}^\infty \tilde q(\lambda_0)^k k =
\frac{1}{(1-\tilde q(\lambda_0))^2}
\]
converges. Thus we can apply Theorem \ref{p1} and obtain together
with 1.
\begin{eqnarray*}
  | \langle e^{itH} \cO (H)f,g \rangle |& \leq &
\Bigl(\sum_{k=0}^{\infty} \Bigl(\frac{2(N-1)}{N }\Bigr)^k \|V\|_1^k
\, c(k) \lambda_0^{-k/2}\Bigr)
\|f\|_1\|g\|_1 \, |t|^{-1/2}
   \\
   &=&
\Bigl(\sum_{k=0}^{\infty} \tilde q(\lambda_0)^k  c(k) \Bigr)
\|f\|_1\|g\|_1 \, |t|^{-1/2}
   \\
   &=&
\Bigl(c(0) + \sum_{k=1}^{\infty} \tilde q(\lambda_0)^k  c(k) \Bigr)
\|f\|_1\|g\|_1 \, |t|^{-1/2}
   \\
   &\leq&
\Bigl(c(0) +  M \sum_{k=1}^{\infty} \tilde q(\lambda_0)^k k \Bigr)
\|f\|_1\|g\|_1 \, |t|^{-1/2}
   \\
   &=&
\Bigl(c(0) +  M \frac{1}{(1-\tilde q(\lambda_0))^2} \Bigr)
\|f\|_1\|g\|_1 \, |t|^{-1/2}
\\
   &\leq&
\Bigl(c(0) +  M \frac{1}{(1-q_*)^2} \Bigr)
\|f\|_1\|g\|_1 \, |t|^{-1/2}
\end{eqnarray*}
for all  $t \neq 0$.
Since spectral measures are finite and since
$\lim_{L\rightarrow \infty} \cOL = \cO$ pointwise,
we can replace $\cOL$ by $\cO$ in the last inequality using
dominated convergence. By linearity a factor $4$ appears for complex
valued $f$ and $g.$ This ends the proof of 2.
\\
\underline{3.:}
\\
Stone's formula applied to $H_0$ yields
\[
\langle e^{itH_0} \cOL(H_0) f,g \rangle= \frac{1}{\pi}\int_0^\infty
e^{it\lambda } \cOL(\lambda ) \Im\langle R_0(\lambda+i0) f,g \rangle
d\lambda,
\]
which is the first term in the expansion for $\langle e^{itH}
\cOL(H) f,g \rangle.$ Therefore
\[
\langle (e^{itH} \cOL(H) - e^{itH_0} \cOL(H_0))f,g \rangle=
\frac{2}{\pi}\int_0^\infty e^{it\mu^2} \cOL(\mu^2 )
\sum_{k=1}^\infty \Im\langle R_0(\mu^2+i0) (-V R_0(\mu^2+i0))^k f,g
\rangle \mu d\mu.
\]
Now the same proof as in 2. but without the first term yields the
assertion.
\end{proof}


\subsection{Low energy estimate}

In this section we consider the case of initial conditions with
compact energy band with respect to the spectral representation.
Again we adapt the reasoning of \cite{GS} to the transmission
situation.

For any smooth and compactly supported cut-off function $\chi$ in $\RR$,
by Theorem \ref{res.id}
we have for any $x, x'\in R$
$$
2i\pi\int_0^{+ \infty} e^{it\lambda} \chi(\lambda) E_{ac}(d\lambda)(x,x')=
-2i\pi\int_0^{+\infty} e^{it\lambda} \chi(\lambda) \Im K(x,x',\lambda) d\lambda,
$$
and by the change of variables $\lambda=\mu^2$, we get
$$
2i\pi\int_0^{+\infty} e^{it\lambda} \chi(\lambda) E_{ac}(d\lambda)(x,x')=
-4i\pi\int_0^{+\infty} e^{it\mu^2} \chi(\mu^2) \Im K(x,x',\mu^2) \mu  d\mu.
$$
Now recalling the definition of $K$, we again distinguish between the following three cases:
\begin{enumerate}
\item
If $x, x'\in R_j$ with $x'>x$, then
$$
K(x,x',\mu^2)=\frac{1}{W_{j,-}(\mu)}  f_{j,-}(\mu,x)f_{j,+}(\mu,x')
+\frac{c_{j,-,2}(\mu) }{i\mu f_{j+1,+}(\mu,0) s(\mu)}
 f_{j,+}(\mu,x)  f_{j,+}(\mu,x').
$$
As $\overline{f_{j,\pm}(\mu,x)}=f_{j,\pm}(-\mu,x)$, we deduce
\beqs
2i\pi\int_0^{+\infty} e^{it\lambda} \chi(\lambda) E_{ac}(d\lambda)(x,x')&=&
-2i\pi\int_{-\infty}^{+\infty} e^{it\mu^2} \chi(\mu^2)\mu    \frac{1}{W_{j,-}(\mu)}  f_{j,-}(\mu,x)f_{j,+}(\mu,x')  d\mu
\\
&-&
2\pi\int_{-\infty}^{+\infty} e^{it\mu^2} \chi(\mu^2)   \frac{c_{j,-,2}(\mu) }{f_{j+1,+}(\mu,0) s(\mu)}
 f_{j,+}(\mu,x)  f_{j,+}(\mu,x') d\mu.
\eeqs

The first term of this right hand side was estimated in Lemma 4 of \cite{GS}, hence it remains to estimate the second term. For that purpose, we set
\beqs
T_2(t,x,x')&:=&\int_{-\infty}^{+\infty} e^{it\mu^2} \chi(\mu^2)   \frac{c_{j,-,2}(\mu) }{f_{j+1,+}(\mu,0) s(\mu)}
 f_{j,+}(\mu,x)  f_{j,+}(\mu,x') d\mu\\
&=&\int_{-\infty}^{+\infty} e^{it\mu^2} \chi(\mu^2) e^{i\mu (x+x')}  \frac{c_{j,-,2}(\mu) }{f_{j+1,+}(\mu,0) s(\mu)}
 m_{j,+}(\mu,x)  m_{j,+}(\mu,x') d\mu.
\eeqs
Hence denoting by
$$
p(\mu):=\frac{c_{j,-,2}(\mu) }{f_{j+1,+}(\mu,0) s(\mu)},
$$
we have shown in Corollary \ref{corouncoefdansH1} that this function belongs to
$H^1(-R,R)$, for all $R>0$. Since the mapping
$$
q:\mu\to \chi(\mu^2) m_{j,+}(\mu,x) m_{j,+}(\mu,x'),
$$
has compact support and is in $C^1(\RR)$ with the property
$$
|q(\mu)|+|\dot{q}(\mu)|\leq C,
$$
for some $C>0$ independent of $x$ and $x'$ due to (\ref{estm+}) and
(\ref{estmdot}), we deduce that the product $pq$ belongs to
$H^1(\RR)$. By Plancherel theorem (see for instance \cite[p.
60]{ReedSimonII}), we deduce that
$$
T_2(t,x,x')=t^{-\frac12} \int_{-\infty}^{+\infty} {\mathcal F}^{-1}(pq)(\xi+x+x') e^{-\frac{i\xi^2}{t}}d\xi.
$$
and consequently
\beqs
|T_2(t,x,x')|&\leq& |t|^{-\frac12} \int_{-\infty}^{+\infty} |{\mathcal F}^{-1}(pq)(\xi+x+x')|d\xi
\\
&\leq& |t|^{-\frac12} \int_{-\infty}^{+\infty} |{\mathcal F}^{-1}(pq)(\xi)|d\xi
\\
&\leq& C |t|^{-\frac12} \|pq\|_{H^1(\RR)},
\eeqs
for some $C>0$.
\item
If $x, x'\in R_j$ with $x'<x$, then
$$
K(x,x',\mu^2)=\frac{1}{W_{j,-}(\mu)}  f_{j,-}(\mu,x')f_{j,+}(\mu,x)
+\frac{c_{j,-,2}(\mu) }{i\mu f_{j+1,+}(\mu,0) s(\mu)}
 f_{j,+}(\mu,x)  f_{j,+}(\mu,x').
$$

In that case the first term  was treated  in  Lemma 4 of \cite{GS}, while the second term is the same as
before.
\item
If $x\in R_j$ and $x'\in R_k$ with $k\ne j$, then
$$
K(x,x',\mu^2)= \frac{1}{i\mu f_{j+1,+}(\mu,0) s(\mu)}
 f_{j,+}(\mu,x)  f_{k,+}(\mu,x').
$$
Therefore in that case we have
$$
2i\pi\int_0^{+\infty} e^{it\lambda} \chi(\lambda) E_{ac}(d\lambda)(x,x')=2\pi
\int_{-\infty}^{+\infty} e^{it\mu^2} \chi(\mu^2)   \frac{1}{f_{j+1,+}(\mu,0) s(\mu)}
 f_{j,+}(\mu,x)  f_{j,+}(\mu,x') d\mu.
$$
Since $\frac{1}{f_{j+1,+}(\mu,0)}$ is in $C^1(\RR)$, by Corollary \ref{corosdansH1},
the function
$$
\mu\to  \frac{1}{f_{j+1,+}(\mu,0) s(\mu)}
$$
belongs to $H^1(-R,R)$, for all $R>0$ and we conclude as for $T_2(t,x,x')$.

\qed

\end{enumerate}


\end{document}